\theoremstyle{theorem}
\newtheorem{theorem}{Theorem}
\newtheorem{conjecture}[theorem]{Conjecture}
\newtheorem{corollary}[theorem]{Corollary}
\newtheorem{definition}[theorem]{Definition}
\newtheorem{question}[theorem]{Question}
\newtheorem{lemma}[theorem]{Lemma}
\newtheorem{proposition}[theorem]{Proposition}
\newtheorem{remark}[theorem]{Remark}
\newcommand{\R}{\mathbb{R}}
\newcommand{\metric}{\langle \, , \, \rangle}
\newcommand{\disp}{\displaystyle}
\newcommand{\ra}{\rightarrow}
\newcommand{\eps}{\varepsilon}
\newcommand{\II}{\mathrm{II}}
\newcommand{\di}{\mathrm{d}}
\newcommand{\HH}{\mathbb{H}}
\newcommand{\Ricc}{\mathrm{Ric}}
\newcommand{\cut}{\mathrm{cut}}
\newcommand{\vol}{\mathrm{vol}}
\newcommand{\lip}{\mathrm{Lip}}
\newcommand{\loc}{\mathrm{loc}}
\newcommand{\tcr}{\textcolor{defaultcolor}}
\newcommand{\tcb}{\textcolor{defaultcolor}}
\newcommand{\tcg}{\textcolor{defaultcolor}}
\newcommand{\LL}{\mathscr{L}}
\newcommand{\Sec}{\mathrm{Sec}}
\newcommand{\BB}{\mathbb{B}}
\newcommand{\Ric}{\mathrm{Ric}}
\newcommand{\RR}{\mathbb{R}}
\newcommand{\VV}{\mathscr{V}}
\newcommand{\pmgh}{\mathrm{pmGH}}
\newcommand{\gh}{\mathrm{GH}}
\newcommand{\mes}{\mathfrak{m}}
\newcommand{\ber}{\mathscr{B}}
\newcommand{\RCD}{\mathsf{RCD}}
\DeclareMathOperator{\diver}{div \,}
\DeclareMathOperator{\diverge}{div}
\DeclareMathOperator{\dist}{dist}
\DeclareMathOperator{\tr}{\mathrm{Tr}}
\renewcommand{\div}{\mathrm{div}}
\begin{document}

\title[Splitting for minimal graphs]{Non-negative Ricci curvature and Minimal graphs with linear growth}

\author{Giulio Colombo}
\address{Dipartimento di Matematica ``F. Enriques", Universit\`a degli studi di Milano, Via Saldini 50, I-20133 Milano (Italy).}
\email{giulio.colombo@unimi.it, marco.rigoli55@gmail.com}

\author{Eddygledson S. Gama}
\address{Departamento de Matem\'atica, Universidade Federal de Pernambuco, 50670-901 Recife, Pernambuco (Brazil).}
\email{eddygledson.gama@ufpe.br}

\author{Luciano Mari}
\address{Dipartimento di Matematica ``G. Peano", Universit\`a degli Studi di Torino, Via Carlo Alberto 10, 10123 Torino (Italy).}
\email{luciano.mari@unito.it}

\author{Marco Rigoli}

\thanks{}

\date{\today}

\begin{abstract}
We study minimal graphs with linear growth on complete manifolds $M^m$ with $\Ricc \ge 0$. Under the further assumption that the $(m-2)$-th Ricci curvature in radial direction is bounded below by $C r(x)^{-2}$, we prove that any such graph, if non-constant, forces tangent cones at infinity of $M$ to split off a line. Note that  $M$ is not required to have Euclidean volume growth. We also show that $M$ may not split off any line. Our result parallels that obtained by Cheeger, Colding and Minicozzi for harmonic functions. The core of the paper is a new refinement of Korevaar's gradient estimate for minimal graphs, together with heat equation techniques.    
\end{abstract}

\subjclass[2020]{Primary 53C21, 53C42; Secondary 53C24, 58J65, 31C12}

\keywords{Bernstein theorem, splitting, minimal graph, Ricci curvature, tangent cone}

\maketitle
\tableofcontents

\section{Introduction}

The theory of entire minimal graphs in Euclidean space $\R^m$, that is, of functions $u : \R^m \to \R$ solving the minimal (hyper)surface equation
\begin{equation}\label{P}\tag{MSE}
\diver \left( \frac{D u}{\sqrt{1+|D u|^2}}\right) = 0
\end{equation}
is built upon the following foundational results:

\begin{itemize}
\item[$(\ber 1)$] The \emph{Bernstein theorem}: solutions of \eqref{P} are all affine if and only if $m \le 7$.
\item[$(\ber 2)$] For each $m \ge 2$, positive solutions of \eqref{P} are constant.
\item[$(\ber 3)$] For each $m \ge 2$, solutions of \eqref{P} with at most linear growth on one side are affine (i.e., the Hessian $D^2 u \equiv 0$).
\end{itemize}
Here, $u$ is said to have at most linear growth on one side if, up to changing the sign of $u$, 
	\[
	u(x) \ge - C(1+r(x))
	\]
holds on $\R^m$ for some constant $C>0$, where $r$ is the distance from a fixed origin. \\
The validity of $(\ber 1)$ is due, as well-known, to the combined effort of S. Bernstein \cite{bernstein} ($m=2$, see also \cite{emi,hopf_bern}), W.H. Fleming (\cite{fleming}, still for $m=2$), E. De Giorgi (\cite{dg1,dg2}, $m=3$), F. Almgren (\cite{almgren}, $m=4$), J. Simons (\cite{simons}, $m \le 7$) and E. Bombieri, De Giorgi and E. Giusti (\cite{bdgg}, counterexamples if $m \ge 8$). On the other hand, $(\ber 2)$ and $(\ber 3)$ were both proved in \cite{bdgm} by Bombieri, De Giorgi and M. Miranda for $m \ge 3$; in particular, $(\ber 3)$ refines J. Moser's Theorem \cite{jmoser}, which states that $u$ is affine provided that $|Du| \in L^\infty(M)$. Further properties of entire minimal graphs in Euclidean space were obtained in \cite{bg} by Bombieri and Giusti: among them, we mention the fact that $u$ is affine whenever  $m-1$ of its partial derivatives are bounded. The result was improved in recent years by A. Farina \cite{far1,far2}, who showed that $u$ is affine if $m-7$ partial derivatives of $u$ are bounded on one side. Further enhancements  of Moser' result, proving that $D^2u \equiv 0$ by only assuming that $|Du| = o(r)$ as $r(x) \to \infty$, were obtained in \cite{cns,eckerhuisken,Simon}. We also mention the recent \cite{farina_new}, where the rigidity of a minimal graph is obtained by assuming that an upper level set contains, or is contained in, a half-space. \par
In a Riemannian setting, it is natural to ask the following 

\begin{question}
For which classes of complete Riemannian manifolds $M$ one could expect results like $(\ber 1),(\ber 2),(\ber 3)$? 
\end{question}
The problem motivated our previous works \cite{bmpr,cmmr} as well as the present paper. Recall that a solution of \eqref{P} on a Riemannian manifold $(M^m,\sigma)$ gives rise to a graph 
	\[
	F : M \to \R \times M, \qquad F(x) = \big(u(x),x\big) 
	\]
which is minimal if the ambient space $\R \times M$ is endowed with the product metric $\di t^2 + \sigma$. Hereafter, we say that the graph is entire if $u$ is defined on the whole of $M$. \par
If $M$ is close to hyperbolic space $\HH^m$, namely, $M$ is a Cartan-Hadamard manifolds with suitably pinched negative curvature, $(\ber 1),(\ber 2),(\ber 3)$ drastically fail, since each continuous function on the boundary at infinity of $M$ can be attained as the limit value of an entire minimal graph, which is therefore bounded. An exhaustive literature on the problem can be found in the survey \cite{esko_survey}, see also the introduction of \cite{bcmmpr}.\par
Denoting with $g = F^*(\di t^2 + \sigma)$ the graph metric and with $\Delta_g$ its Laplace-Beltrami operator,  equation \eqref{P} can be written as $\Delta_g u = 0$, \tcb{making contact with the theory of harmonic functions. In Euclidean space $M=\R^m$, $(\ber 2)$ and $(\ber 3)$ hold as well when considering harmonic functions instead of solutions to \eqref{P}, while the analogy fails for $(\ber 1)$ since there is no rigidity for entire harmonic functions without imposing any growth condition. This suggests that, for $(\ber 2)$ and $(\ber 3)$, an answer to the above question may be guided by the global behaviour of harmonic functions on Riemannian manifolds, according to which it is natural to consider the problem on manifolds satisfying either}  
%
	\begin{equation}\label{ipo_secric}
	\Sec \ge 0, \qquad \text{or} \qquad \Ricc \ge 0
	\end{equation}
where $\Sec,\Ricc$ are the sectional and Ricci curvature of $(M,\sigma)$. Indeed, if $\Ricc \ge 0$, positive harmonic functions on $M$ are constant, by S.Y. Cheng and S.T. Yau's gradient estimate \cite{yau, chengyau}, while a harmonic function with linear growth forces any tangent cone at infinity of $M$ to split, by work of J. Cheeger, T. Colding and W. Minicozzi \cite{ccm}. \tcb{Furthermore, $M$ itself splits off a line if $\Ricc \ge 0$ is strengthened to $\Sec \ge 0$ (cf. \cite{abfp} for a complete proof), or if $M$ is parabolic (see \cite{litam} and Remark \ref{rem_parabolic} below).} \par
In view of the convergence theory developed in the past 50 years for manifolds with $\Sec \ge 0$ or $\Ricc \ge 0$, some of the tools used to prove the Bernstein theorem in $\R^m$ are available on manifolds satisfying \eqref{ipo_secric}, making these assumptions a natural setting also for the study of $(\ber 1)$. However, much has to be done and $(\ber 1)$ seems very challenging to prove even on manifolds with $\Sec \ge 0$. In fact, we are aware of no results in this direction.\par 
The situation is different for $(\ber 2)$ and $(\ber 3)$, for which, as we shall detail below, the main difficulty is to prove the results by only requiring $\Ricc \ge 0$, arguably the sharp condition for their validity (in this case, however, $(\ber 3)$ has to be suitably weakened, see later).\par
Regarding $(\ber 2)$, after previous work in \cite{rosenbergschulzespruck} by H. Rosenberg, F. Schulze and J. Spruck, a complete answer was obtained by the first, third and fourth authors together with M. Magliaro in \cite{cmmr}, and independently by Q. Ding in \cite{ding} with different methods:

\begin{theorem}\cite{cmmr,ding}
Complete manifolds $M$ with $\Ricc \ge 0$ satisfy $(\ber 2)$, that is, entire positive minimal graphs over $M$ are constant. 
\end{theorem}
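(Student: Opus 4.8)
The plan is to reinterpret $u$ as a harmonic function for the graph metric and to deduce a Harnack inequality from an \emph{a priori} gradient bound. Set $W := \sqrt{1+|Du|^2}$ and let $g := \sigma + \di u \otimes \di u$ be the graph metric; the map $F(x) = (u(x),x)$ identifies $(M,g)$ with the minimal hypersurface $\Sigma = F(M) \subset \R \times M$, and \eqref{P} becomes $\Delta_g u = 0$, so $u$ is a positive $g$-harmonic function. Since constants are $g$-harmonic as well, I would aim to establish the \emph{elliptic Harnack inequality} for $(M,g)$, which forces $u$ to be constant: granting $\sup_{B^g_r(o)} h \le C \inf_{B^g_r(o)} h$ for every $r > 0$ and every positive $g$-harmonic $h$ on $B^g_{2r}(o)$, one takes $h = u$ and lets $r \to \infty$ to obtain $u \le C\,u(o)$, so $u$ is bounded; then $v := u - \inf_M u \ge 0$ is $g$-harmonic with $\inf_M v = 0$, and since $v > 0$ everywhere unless $v \equiv 0$ (strong maximum principle), the Harnack inequality on the balls $B^g_r(o)$ as $r \to \infty$ forces $v \equiv 0$, i.e.\ $u \equiv \inf_M u$.

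The key remark is then that the elliptic Harnack inequality for $(M,g)$ follows as soon as $|Du|$ is \emph{bounded} on $M$. Indeed, in that case $\sigma \le g \le \big(1 + \sup_M |Du|^2\big)\,\sigma$, so the identity map is a bi-Lipschitz equivalence $(M,g) \to (M,\sigma)$; since $\Ricc_\sigma \ge 0$, the metric $\sigma$ is volume doubling (Bishop--Gromov) and supports a scale-invariant $L^2$-Poincar\'e inequality (Buser), and both properties are invariant under bi-Lipschitz changes of metric, hence hold for $(M,g)$ as well. By the Grigor'yan--Saloff-Coste equivalence, $(M,g)$ then satisfies the parabolic, a fortiori the elliptic, Harnack inequality, and the previous step applies.

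Everything is thereby reduced to the heart of the matter: \emph{every entire positive solution of \eqref{P} on $M$ with $\Ricc \ge 0$ has globally bounded gradient}. I would prove this in two steps. First, an a priori linear-growth bound $0 < u(x) \le \Lambda\big(1 + r(x)\big)$, where $\Lambda$ depends only on $m$, on $u(o)$ and on the geometry near $o$; this is the genuinely difficult step and the place where a refinement of Korevaar's gradient estimate for \eqref{P} is needed, and it should exploit $\Ricc \ge 0$ only through the volume doubling and Poincar\'e/Sobolev inequalities on $(M,\sigma)$ — in particular, no Euclidean volume growth is to be assumed, which is exactly what makes the step delicate. Second, Korevaar's interior gradient estimate, which on a ball $B_R(p) \subset M$ with $\Ricc \ge 0$ takes the form $|Du|(p) \le C_1(m)\exp\big(C_2(m)\,\mathrm{osc}_{B_R(p)}u / R\big)$ for every $R > 0$: since the linear bound yields $\limsup_{R \to \infty}\mathrm{osc}_{B_R(p)}u / R \le \Lambda$, letting $R \to \infty$ gives the uniform estimate $|Du|(p) \le C_1(m)\exp\big(C_2(m)\Lambda\big)$, independent of $p$. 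I expect the first step — the refined gradient/growth estimate in the absence of any volume hypothesis — to be the main obstacle.

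Finally, I would record an alternative endgame when $(M,\sigma)$ is parabolic, bypassing the Harnack machinery once the gradient bound is in hand. The angle function $\Theta := W^{-1} = \langle \nu, \partial_t\rangle \in (0,1]$ satisfies on $\Sigma$ the Jacobi-type identity $\Delta_g \Theta = -\big(|A|^2 + \overline{\Ricc}(\nu,\nu)\big)\Theta$, and $\overline{\Ricc}(\nu,\nu) = \Ricc_\sigma(\nu^\top,\nu^\top) \ge 0$ since $\R \times M$ is a Riemannian product with $\Ricc_\sigma \ge 0$; hence $\Theta$ is a bounded positive $g$-superharmonic function. As $(M,g)$ is bi-Lipschitz to the parabolic manifold $(M,\sigma)$, it is itself parabolic, so $\Theta$ is constant, whence $|A| \equiv 0$, i.e.\ $\Hess_\sigma u \equiv 0$; then $\nabla u$ is parallel, and were it non-zero, the universal cover of $M$ would split off a line along which $u$ (pulled back) is affine and non-constant, hence unbounded below, contradicting $u > 0$. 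Therefore $\nabla u \equiv 0$ and $u$ is constant. This also clarifies that $\Ricc \ge 0$, entering through the product structure of $\R \times M$, is the natural assumption, and that the substantial case is the non-parabolic one, closed by the preceding paragraphs.
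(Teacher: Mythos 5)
Your reduction of the theorem to a global gradient bound is sound: once $|Du|\in L^\infty(M)$, the graph metric is bi-Lipschitz to $\sigma$ (equivalently, $L=W\Delta_g$ is a uniformly elliptic divergence-form operator with respect to $\sigma$), and doubling plus Buser's Poincar\'e inequality under $\Ricc\ge 0$ give the scale-invariant Harnack inequality of \cite{sal92}, whence the Liouville property you use; this is exactly the machinery the present paper employs in its later sections. The genuine gap is that the ``heart of the matter'' is deferred rather than proved, and both ingredients you invoke for it are unavailable under the sole hypothesis $\Ricc\ge 0$. (a) A Korevaar-type interior estimate $|Du|(p)\le C_1(m)\exp\big(C_2(m)\,\mathrm{osc}_{B_R(p)}u/R\big)$ with constants depending only on $m$ is not known when one only assumes $\Ricc\ge 0$: the estimates obtained by Korevaar's method, e.g.\ \eqref{eq_grad_rss} from \cite{rosenbergschulzespruck}, require in addition $\Sec\ge-\bar\kappa^2$ and carry a factor $1+\bar\kappa R\coth(\bar\kappa R)$ multiplying the \emph{squared} ratio, so they do not yield a uniform bound from linear growth unless $\bar\kappa=0$; Section \ref{sec_localgrad} of the paper explains precisely this obstruction, and even the refinement proved here (Theorem \ref{teo_gradient_decay}) needs the extra hypothesis $\Ricc^{(m-2)}(\nabla r)\ge-\bar\kappa^2/(1+r^2)$, not just $\Ricc\ge 0$. (b) The a priori linear-growth bound $u\le\Lambda(1+r)$ for positive entire solutions is also unsubstantiated: you give no mechanism for it, and the natural tools (Harnack, Cheng--Yau-type estimates for $\Delta_g$) are exactly what you are in the process of establishing, so the scheme is circular at its crucial point.

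It is also worth noting that the cited proofs proceed quite differently, and more directly. In \cite{cmmr} the key is a \emph{global} gradient estimate for positive entire solutions, $\sqrt{1+|Du|^2}\le e^{\kappa u\sqrt{m-1}}$ under $\Ricc\ge-(m-1)\kappa^2$, obtained by a mechanism different from Korevaar's localized argument; for $\kappa=0$ this gives $Du\equiv 0$ at once, so no Harnack/Liouville step is needed at all (Ding's proof in \cite{ding} is different again). Even in the Euclidean model case \cite{bdgm}, positivity is used \emph{inside} the gradient estimate through the term $\big(u(x)-\inf_{B_R}u\big)/R\le u(x)/R\to 0$, rather than through a separate linear-growth bound followed by an oscillation estimate; your two-step scheme is thus both harder than necessary and, at the decisive step, not backed by any available estimate.
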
 

In this paper, we address $(\ber 3)$. In view of the result in \cite{ccm}, it is reasonable to formulate the following 

\begin{conjecture}\label{conj_1}
Let $M$ be a complete manifold with $\Ricc \ge 0$ and possessing a non-constant entire minimal graph with at most linear growth on one side. Then, every tangent cone at infinity of $M$ splits off a line.
\end{conjecture}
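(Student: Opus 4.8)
The plan is to transpose the Cheeger--Colding--Minicozzi analysis of linear growth harmonic functions from $M$ to the minimal graph $\Sigma = F(M)\subset\RR\times M$, the hypothesis on the $(m-2)$-th radial Ricci curvature entering precisely where the graph metric fails to inherit $\Ricc\ge 0$.

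\emph{Step 1: from one-sided linear growth to a bounded gradient.} First I would prove $\sup_M|\grad u| =: L<\infty$; this is the crux. Korevaar's interior gradient estimate for \eqref{P}, applied on balls $B_R(x)$ of radius $R\sim r(x)$, must be refined so that the constant it produces does not deteriorate as $r(x)\to\infty$ --- a genuinely scale-invariant estimate valid under $\Ricc\ge 0$ (together with the mild $(m-2)$-Ricci bound). Here the heat-equation techniques enter: one runs a parabolic maximum-principle / Li--Yau-type argument for the Jacobi operator $\Delta_g + |\II|^2 + \overline{\Ricc}_{\RR\times M}(\nu,\nu)$ on the graph, exploiting $\overline{\Ricc}_{\RR\times M}(\nu,\nu) = \Ricc_M(\pi_*\nu,\pi_*\nu)\ge 0$. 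Once $L<\infty$, linear growth of $u$ forces the oscillation of $u$ on $B_R$ to be $O(R)$, and everything becomes scale-bounded: writing $W := \sqrt{1+|\grad u|^2}$, the graph metric $g = \sigma + \di u\otimes\di u$ satisfies $\sigma\le g\le (1+L^2)\sigma$, the function $u$ is $\Delta_g$-harmonic with linear growth on $(M,g)$, and $\Sigma$ is a complete, two-sided, area-minimising (hence stable) minimal hypersurface.

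\emph{Step 2: quantitative flatness.} Testing the stability inequality of $\Sigma$ with a logarithmic cut-off on $g$-balls, and using $\overline{\Ricc}(\nu,\nu)\ge 0$, yields $\int_{B_R}|\II_\Sigma|^2 \le C R^{-2}\vol(B_{2R}) = o(\vol B_R)$; since $\II_\Sigma = W^{-1}\Hess^\sigma u$ and $g$ is uniformly comparable to $\sigma$, this also says that the mean of $|\Hess^\sigma u|^2_\sigma$ on $B_R$ is $O(R^{-2})$ (volume doubling coming from $\Ricc\ge 0$), i.e.\ $u$ is asymptotically affine in an averaged sense. In parallel, the Gauss equation for $\Sigma$ together with the $(m-2)$-Ricci lower bound gives $\Ricc_\Sigma \ge -|\II_\Sigma|^2 - C(1+r_\Sigma)^{-2}$; combined with the $L^1$-smallness of $|\II_\Sigma|^2$ at infinity, this is exactly what is needed to force the pointed Gromov--Hausdorff tangent cones of $(M,g)$ at infinity to be metric cones with $\RCD(0,m)$ structure, sharing vertex and dilations with the corresponding tangent cones of $(M,\sigma)$, to which they are bi-Lipschitz via the limit of the identity maps.

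\emph{Step 3: tangent cones, homogeneity, rigidity, transfer.} Fix a divergent sequence $r_i\to\infty$. Along a subsequence $(M,r_i^{-2}\sigma,p)\to (C(X),o)$, a metric cone; along a further subsequence $(M,r_i^{-2}g,p)\to Z$, an $\RCD(0,m)$ metric cone with the same vertex $o$ and dilations, and $u_i := u/r_i$ --- which solve \eqref{P} on $(M,r_i^{-2}\sigma)$, equivalently are $r_i^{-2}g$-harmonic --- converge locally uniformly to a Lipschitz function $u_\infty$ on $Z$, nonconstant after the usual normalisation (the case of merely sublinear growth is disposed of first: then $|\grad u|\to 0$ at infinity, so $1-W^{-1}\ge 0$ is subharmonic on $(M,g)$ and vanishes at infinity, forcing $W\equiv 1$ and $u$ constant). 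By the convergence theory for harmonic functions on RCD limits, and using Step 2 for the strong convergence, $u_\infty$ is $\Delta_Z$-harmonic of linear growth with $u_\infty(o)=0$. Writing $Z = C(N)$ with $N$ the cross-section --- which is $\RCD(m-2,m-1)$, hence has discrete spectrum --- one expands $u_\infty$ into $\rho$-homogeneous harmonic pieces; Lichnerowicz gives $\lambda_1(N)\ge m-1$, and Lipschitz regularity at the vertex then forces $u_\infty$ to be homogeneous of degree exactly one, $u_\infty = \rho\,\psi$ with $\Delta_N\psi = (m-1)\psi$. Thus $\lambda_1(N)=m-1$, and the RCD version of Obata's rigidity theorem (Ketterer) makes $N$ a spherical suspension, so $Z = \RR\times C(N')$ splits; moreover, in the equality case of Lichnerowicz $\psi$ satisfies $\Hess^N\psi + \psi\, g_N = 0$, whence $\rho\psi$ has vanishing Hessian on $Z$, i.e.\ $\Hess^{g_\infty}u_\infty\equiv 0$, where $g_\infty = \sigma_{C(X)} + \di u_\infty\otimes\di u_\infty$ is the limiting graph metric. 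Finally one transfers the splitting to $C(X)$ itself: $u_\infty$ is a weak solution of \eqref{P} on $C(X)$ (limit of the stationary graphs), and for solutions of \eqref{P} one has the pointwise identity $\Hess^{g}u\equiv 0 \iff \Hess^{\sigma}u\equiv 0$ (if $\Hess^gu=0$ then $\Hess^\sigma u$ is forced to be a multiple of $\di u\otimes\di u$, and the equation kills that multiple); hence $\Hess^{\sigma_{C(X)}}u_\infty\equiv 0$ on the regular part of $C(X)$, and since $u_\infty$ is nonconstant and $C(X)$ is $\RCD(0,m)$, the splitting theorem yields $C(X)=\RR\times Y$. As $r_i$ was arbitrary, every tangent cone at infinity of $M$ splits off a line.

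\emph{Main obstacle.} The genuinely hard step is Step 1: a gradient bound for \eqref{P} under $\Ricc\ge 0$ alone (plus the mild $(m-2)$-Ricci condition) whose quantitative content survives the blow-down $r(x)\to\infty$, since Korevaar's original argument loses control at large scale; this scale-invariant refinement, obtained via the heat-equation machinery, is the technical heart. The secondary difficulties are (i) the compatibility of the two blow-downs, for $\sigma$ and for $g$, so that the splitting of $Z$ can be pushed back onto $C(X)$ --- this is exactly why $\Ricc\ge 0$ alone does not suffice and the $(m-2)$-Ricci hypothesis must be invoked, to control $\Ricc_\Sigma$ through the Gauss equation --- and (ii) ensuring that $u_\infty$ remains a genuine solution of \eqref{P} across the singular set of the limit cone (of codimension $\ge 2$), where the concluding Hessian identity has to be applied.
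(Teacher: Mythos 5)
Your plan reaches for the right tools but misallocates them and, at two crucial points, leaves gaps that the paper's actual argument is specifically designed to close.

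\textbf{Role of the heat equation.} You propose running a Li--Yau-type parabolic argument for the Jacobi operator on the graph to get $\sup_M|Du|<\infty$. That is not how the paper proceeds: the bounded-gradient step is entirely elliptic, a refinement of Korevaar's maximum-principle argument (Theorem \ref{teo_gradient_decay}) with the test function $\psi=\sqrt{\eps^2R^2+r^2}$ and the $\Ricc^{(m-2)}$ bound feeding, via Proposition \ref{prop_distances}, the estimate $\Delta_g r\le m\bar\kappa'/r$ uniformly in the base point. The heat-equation techniques (\`a la Li and Saloff-Coste) enter only in Section \ref{sec_L}, to prove the asymptotic $L^2$-mean-value identities \eqref{Dv}--\eqref{Hess_v} for $|Du|^2$ and $|D^2u|^2$.

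\textbf{The stability cutoff is not strong enough.} You write $\int_{B_R}|\II_\Sigma|^2\le CR^{-2}\vol(B_{2R})=o(\vol B_R)$. That last equality is false: under $\Ricc\ge0$, doubling gives $\vol(B_{2R})\le 2^m\vol(B_R)$, so your bound yields only
\[
\frac{R^2}{\vol(B_R)}\int_{B_R}|\II_\Sigma|^2 = O(1),
\]
i.e.\ boundedness, not smallness. The blow-down argument needs the genuine limit $R^2\,\vol(B_R)^{-1}\int_{B_R}|D^2u|^2\to 0$ (Proposition \ref{prop_asi_grad_hess}); otherwise the rescaled Laplacians $\Delta_{\sigma_n}u_n$ do not vanish in the limit and $u_\infty$ need not be harmonic. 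This is precisely what Theorem \ref{teo_mvLf} — the heat-equation argument comparing $\int_M f\,\partial_t H_L$ against $\int_M\varphi_a\,Lf$ over level sets of the heat kernel — is engineered to provide, and a bare cutoff test of stability does not reproduce it.

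\textbf{Tangent cones are not cones.} Your Step 3 expands $u_\infty$ into $\rho$-homogeneous harmonic pieces on a metric cone $Z=C(N)$ and invokes Lichnerowicz--Obata on the cross-section. But the whole point of Theorem \ref{teo_main}\,(ii) is to drop the Euclidean-volume-growth hypothesis, and without it the tangent cones at infinity of $(M,\sigma)$ (or of $(M,g)$) are general $\RCD(0,m)$ spaces, not metric cones; there is no cross-section $N$ and no $\rho$-homogeneous decomposition to speak of. The paper avoids this entirely: from the mean-value identities one gets that $u_\infty$ is harmonic with $|D_\infty u_\infty|^2$ equal to a nonzero constant on the $\RCD(0,m)$ blow-down, the Bochner inequality then forces $|\Hess u_\infty|\equiv0$, and the splitting follows from the parallel gradient. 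Your bi-Lipschitz transfer between the $\sigma$-blow-down and $g$-blow-down, and the claim that the Gauss equation makes the $g$-tangent cones $\RCD(0,m)$ metric cones, are both unfounded here.

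\textbf{A minor point.} For the sublinear-growth reduction you say $1-W^{-1}\ge0$ is subharmonic and vanishes at infinity, hence $W\equiv1$; subharmonicity and vanishing at infinity do not force a function to be constant without some parabolicity or integrability control. The paper instead handles sublinear growth (Theorem \ref{teo_slower}) with a Caccioppoli inequality fed into the $|Du|^2$ mean-value identity \eqref{Dv}.
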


The problem seems to be considerably harder compared to the case of harmonic functions. We are aware of only two results in the direction of Conjecture \ref{conj_1}. The first is \cite{dingjostxin}, where Ding, J. Jost and Y. Xin proved that $\R^m$ is the only manifold satisfying the following assumptions:
\begin{equation}\label{assu_djx}
\left\{ \begin{array}{ll}
(\ref{assu_djx}.\alpha) & \Ricc \ge 0, \qquad \disp \lim_{r \ra \infty} \frac{|B_r|}{r^m} > 0 \\[0.3cm]
(\ref{assu_djx}.\beta) & \text{the curvature tensor decays quadratically}
\end{array}\right.
\end{equation}
and admitting an entire, non-constant minimal graph with at most linear growth on one side. Very recently, Ding \cite{ding_new} posted on arXiv a paper where he proved Conjecture \ref{conj_1} on manifolds satisfying the assumptions in $(\ref{assu_djx}.\alpha)$. The bulk of his argument is to show the remarkable property that the isoperimetric inequality, satisfied by $(M,\sigma)$ in view of $(\ref{assu_djx}.\alpha)$, is inherited by the graph of $u$. This allowed Ding to adapt, in a nontrivial way, tools from \cite{bdgm,bg} and from  Cheeger-Colding's theory to reach the goal. We stress that his method heavily depends on the Euclidean volume growth condition in $(\ref{assu_djx}.\alpha)$. \par
In our work, we address Conjecture \ref{conj_1} without requiring the Euclidean volume growth assumption, but rather a mild further curvature condition. To formulate our main result, we first recall the definition of the $\ell$-th Ricci curvature:
\begin{definition}\label{def_Ricc_l}
Let $(M, \sigma)$ be a manifold of dimension $m \ge 2$. For $\ell \in \{1,\ldots, m-1\}$, the $\ell$-th (normalized) Ricci curvature is the function
$$
v \in T_xM \quad \longmapsto \quad \Ric^{(\ell)}(v) \doteq  \inf_{\footnotesize{\begin{array}{c}
\mathcal{W} \le v^\perp \\
\dim \mathcal{W} = \ell
\end{array}}
} \left( \frac{1}{\ell} \sum_{j=1}^\ell \Sec(v \wedge e_j)\right), 
$$
where $\{e_j\}$ is an orthonormal basis of $\mathcal W$. 
\end{definition}
The function $\Ric^{(\ell)}$ interpolates between the sectional and Ricci curvatures, obtained respectively for $\ell = 1$ and, up to the normalization constant $(m-1)$, for $\ell = m-1$. In particular, with our chosen normalization the following implications are immediate:
$$
\begin{array}{l}
\Sec \ge c \ \  \Longrightarrow \ \ \Ric^{(\ell-1)} \ge c \\[0.2cm]
\Longrightarrow \ \ \Ric^{(\ell)} \ge c \ \  \Longrightarrow \ \ \Ric \ge (m-1)c.
\end{array}
$$
Hereafter, given $H \in C([0,\infty))$ and denoting with $r$ the distance from a fixed origin $o \in M$, we use the short-hand notation $\Ric^{(\ell)}(\nabla r) \ge - H(r)$ on $M$ to mean the inequality
$$
\Ric^{(\ell)}\big(\nabla r(x)\big) \ge - H\big(r(x)\big) \qquad \forall\, x \in M \backslash \big(\{o\} \cup \cut(o)\big),
$$
where $\cut(o)$ is the cut-locus of $o$.\\[0.2cm]
\indent A relevant class of manifolds for which rigidity holds without imposing any growth of $u$ is that of parabolic ones. Recall that a manifold $M$ is said to be parabolic if every positive superharmonic function on $M$ is constant. 

\begin{remark}\label{rem_parabolic}
\emph{\tcb{By work of N. Varopoulos \cite{varopoulos} and Li and Yau \cite{liyau_acta}, if $\Ricc \ge 0$ the parabolicity of $M$ is equivalent to 
	\begin{equation}\label{eq_varopo_intro}
	\int^\infty \frac{s \di s}{|B_s|} = \infty,
	\end{equation}
where $B_s$ is a geodesic ball centered at a fixed origin $o$. Indeed, \eqref{eq_varopo_intro} is sufficient for the parabolicity of a complete manifold, independently of any curvature requirement, see \cite{grigoryan}.}
}
\end{remark}

Lastly, we recall that a tangent cone at infinity for a complete (non-compact) manifold $M$ is any metric space obtained as a blow-down of $M$. More precisely, a pointed metric space $(X_\infty,\di_\infty,x_\infty)$, $x_\infty \in X_\infty$, is a tangent cone at infinity for $(M,\sigma)$ if, for some base point $x \in M$ and some sequence $\{\lambda_n\}$ of positive real numbers such that $\lambda_n \to \infty$, one has
$$
	(M,\lambda_n^{-1}\dist_\sigma,x) \to (X_\infty,\di_\infty,x_\infty)
$$
in the pointed Gromov-Hausdorff (pGH) sense. If $(M,\sigma)$ has non-negative Ricci curvature, then tangent cones at infinity exist based at any point $x\in M$, by Gromov's precompactness theorem \cite{gro}. 

We are ready to state
\begin{theorem}\label{teo_main}
Let $(M,\sigma)$ be a complete Riemannian manifold of dimension $m \geq 2$ with 
	\[
	\Ricc \ge 0, 
	\]
and let $u \in C^\infty(M)$ be a non-constant entire solution to  \eqref{P}.
\begin{itemize}
\item[$(i)$] If $M$ is parabolic, then \tcr{it admits a splitting} $M = N \times \R$ with the product metric $\sigma_N + \di s^2$, for some complete manifold $N$ with $\Ricc_N \ge 0$, \tcr{such that} in the variables $(y,s) \in N \times \R$ it holds $u(y,s) = as + b$ for some $a,b \in \RR$.
\item[$(ii)$] If $M$ is non-parabolic and 
	\begin{itemize}
	\item[-] $u$ has at most linear growth on one side; 
	\item[-] there exists an origin $o \in M$ such that, denoting with $r$ the distance from $o$, 
	\begin{equation}\label{eq_curv_m-2}
	\Ricc^{(m-2)}(\nabla r) \geq - \frac{\bar \kappa^2}{1+r^2} \qquad \text{on } \, M,
	\end{equation}
for some constant $\bar \kappa \ge 0$, 
	\end{itemize}
then every tangent cone at infinity of $M$ splits off a line. 
\end{itemize}
\end{theorem}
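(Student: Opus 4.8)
The plan is to reduce both parts to a Liouville-type property obtained via a refined gradient estimate for $u$, combined with the Cheeger--Colding splitting theory. Write $g = F^*(\di t^2 + \sigma)$ for the graph metric, so that $\Delta_g u = 0$, and recall Korevaar's estimate controls $|\nabla u|$ (equivalently the ambient angle function $W = \sqrt{1+|Du|^2}$) in terms of the geometry of $(M,\sigma)$. For part $(i)$, if $M$ is parabolic then so is the graph $(M,g)$ — parabolicity is a coarse property stable under the quasi-isometry $g \ge \sigma$ fails in general, but one checks it via the Varopoulos-type volume test of Remark~\ref{rem_parabolic} applied on the graph, using $\Ricc \ge 0$ to control the volume of $g$-balls. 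On a parabolic manifold every bounded-below harmonic function with finite Dirichlet-type growth is forced to be constant unless $|\nabla_g u|$ is itself a nontrivial bounded harmonic-type quantity; the clean way is to observe that $W \ge 1$ satisfies a Bochner inequality $\Delta_g W \ge 0$ (this is the classical fact that $W^{-1}$ is superharmonic on a minimal graph when $\Ricc \ge 0$), so $W^{-1}$ is a positive superharmonic function on the parabolic manifold $(M,g)$, hence constant. Thus $|Du|$ is constant; if it is zero $u$ is constant, contrary to assumption, so $|Du| \equiv c > 0$ and the level sets of $u$ foliate $M$ by totally geodesic hypersurfaces, and the standard de Rham-type argument (using that $\nabla u/|\nabla u|$ is a parallel vector field, which follows from $\Hess u = 0$ — itself a consequence of the equality case in Bochner once $W$ is constant) yields $M = N \times \R$ with $u = as + b$, and $\Ricc_N \ge 0$ is inherited.

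For part $(ii)$, the core is the new refinement of Korevaar's gradient estimate advertised in the abstract, which under $\Ricc \ge 0$ and the $(m-2)$-Ricci lower bound \eqref{eq_curv_m-2} should upgrade the naive bound $|Du|(x) \le \exp(C r(x))$ to a sublinear-in-$\log$ or polynomially controlled statement — precisely enough that, after rescaling $(M, \lambda_n^{-1}\dist_\sigma)$ and using that $u$ has at most linear growth on one side, the rescaled functions $u_n = \lambda_n^{-1}(u - u(x))$ have locally uniformly bounded gradient with respect to the rescaled graph metrics. The reason \eqref{eq_curv_m-2} enters is that Korevaar's argument differentiates the minimal surface equation along geodesics and picks up a term controlled by the radial $(m-2)$-Ricci curvature of the $(m-1)$-dimensional level sets, which a Bochner/maximum-principle comparison turns into the stated quadratic decay hypothesis; the heat-equation techniques are then used to propagate the estimate from small balls to all of $M$ without a Euclidean volume assumption, exploiting the parabolic Harnack inequality valid under $\Ricc \ge 0$.

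Granting the refined estimate, fix a tangent cone at infinity $(X_\infty, \di_\infty, x_\infty)$ realized along $\lambda_n \to \infty$. Since $\Ricc \ge 0$, the rescaled graphs $(M, \lambda_n^{-1} g, x)$ also have $\Ricc_g \ge 0$ (the graph metric has non-negative Ricci curvature when $\Ricc_\sigma \ge 0$, by the minimal-graph Bochner formula, or at worst a lower Ricci bound degenerating to $0$ after rescaling), so by Gromov precompactness and Cheeger--Colding they converge, up to subsequence, to a limit metric-measure space $(Y_\infty, \di_{Y}, y_\infty)$ which is at bounded Gromov--Hausdorff distance from $(X_\infty, \di_\infty, x_\infty)$ — indeed $g \ge \sigma$ pointwise and $g \le (1 + \sup_{B} |Du|^2)\sigma$ on the relevant balls, so after rescaling the two pointed spaces coincide because $\sup |Du|^2 = o(\lambda_n^2)$. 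The rescaled functions $u_n$ are $g_n$-harmonic with uniformly Lipschitz constant and linear growth bound, so by the Cheeger--Colding compactness and stability of harmonic functions they converge to a non-constant harmonic function $u_\infty$ of linear growth on $X_\infty = Y_\infty$; here non-constancy is the delicate point and is exactly where the lower bound on $|Du|$ — again from $W^{-1}$ being superharmonic with a definite positive infimum, or from a Harnack estimate preventing the gradient from collapsing — is used to keep $\mathrm{osc}_{B_1} u_n$ bounded away from $0$. Finally, by the Cheeger--Colding--Minicozzi splitting theorem \cite{ccm} applied to the non-constant linear-growth harmonic function $u_\infty$ on the Ricci-limit cone $X_\infty$, the space $X_\infty$ splits off a line, which is the assertion.

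The main obstacle is the refined gradient estimate itself, i.e. showing $|Du| = o(\lambda_n^2)$ on balls of radius $\lambda_n$ under only $\Ricc \ge 0$ and \eqref{eq_curv_m-2}: Korevaar's method gives exponential bounds, and extracting the needed sub-quadratic (essentially linear, matching the growth of $u$) control requires the new maximum-principle argument on the level hypersurfaces combined with heat-kernel estimates, and is where the hypothesis on $\Ricc^{(m-2)}(\nabla r)$ is genuinely needed; a secondary difficulty is ensuring the limit harmonic function $u_\infty$ is non-constant, which must be handled by a quantitative lower gradient bound rather than a soft compactness argument.
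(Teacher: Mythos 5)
Your part $(i)$ follows the paper's own route (parabolicity passes to the graph, $W^{-1}$ is a positive superharmonic function on $(M,g)$, hence constant, the Jacobi equation then kills $\II$, so $D^2u\equiv 0$ and the parallel field $Du$ splits $M$); the only missing ingredient is the reason the graph is parabolic, which is not a soft quasi-isometry remark but the calibration volume bound $|B^g_r|\le 2|B_{3r}|$, which transfers the Varopoulos-type criterion of Remark \ref{rem_parabolic} from $(M,\sigma)$ to $(M,g)$.

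Part $(ii)$, however, has genuine gaps. First, the gradient estimate you aim for is both misstated and insufficient: since $|D_nu_n|_{\sigma_n}=|Du|$ is scale invariant, ``uniform local Lipschitz bounds after rescaling'' is exactly the global bound $|Du|\in L^\infty(M)$, and a bound of the type $\sup_{B_{\lambda_n}}|Du|=o(\lambda_n^2)$ gives nothing; the paper proves $|Du|\in L^\infty(M)$ (Theorem \ref{teo_gradient_decay} and Corollary \ref{cor_boundedgrad}) by a purely elliptic Korevaar-type maximum principle with the barrier $\sqrt{\eps^2R^2+r^2}$, the hypothesis on $\Ricc^{(m-2)}(\nabla r)$ entering only through the estimate $\Delta_g r\le C/r$ of Proposition \ref{prop_distances}; the heat equation plays no role at this stage. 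Second, your blow-down is performed on the rescaled \emph{graph} metrics and rests on the claim that the graph has non-negative (or asymptotically non-negative) Ricci curvature; this is false in general, since by the Gauss equation $\Ricc_g$ contains terms quadratic in $\II$ with a negative sign and there is no pointwise decay of $\II$ available (the paper only obtains the integral decay \eqref{Hess_v}), so Cheeger--Colding precompactness cannot be applied to $(M,\lambda_n^{-1}g)$. Third, and most importantly, the conclusion requires much more than a non-constant linear-growth harmonic limit: the result of \cite{ccm} is a statement about harmonic functions on the smooth manifold forcing \emph{its} tangent cones to split, not a splitting criterion for a harmonic function living on the limit space, and your proposed mechanisms for non-degeneracy (superharmonicity of $W^{-1}$, a Harnack bound) only yield constancy of $W$ in the parabolic case. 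What is actually needed, and what the paper proves via heat-kernel techniques for the uniformly elliptic operator $L\phi=\diver(Wg^{ij}\phi_i\partial_{x_j})$, are the mean-value identities \eqref{Dv} and \eqref{Hess_v}: Proposition \ref{prop_allaLi} applied to $-W^2$ gives $\frac{1}{|B_R|}\int_{B_R}|Du|^2\to\sup_M|Du|^2$, and Theorem \ref{teo_mvLf} applied to $f=1/W$ (which satisfies $Lf\le-\|\II\|^2$) gives $\frac{R^2}{|B_R|}\int_{B_R}|D^2u|^2\to 0$. Only with these, after blowing down the base $(M,\sigma)$ (which does have $\Ricc\ge 0$) and using the Ambrosio--Honda stability results, does one get that $u_\infty$ is harmonic on the limit $\RCD(0,m)$ space with $|D_\infty u_\infty|$ constant and non-zero, so that the Bochner inequality forces $\Hess u_\infty\equiv 0$ and the splitting follows; your sketch has no substitute for this step.
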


\begin{remark}
\emph{\tcb{In Case $(i)$, the claimed splitting $M=N\times\R$ for which $u$ is independent of $N$ may not be the unique splitting of the manifold as a product of a line and a complete manifold, as the case of affine graphs on $M=\R^2$ shows.} In Case $(ii)$, since $M$ is non-parabolic then necessarily $m \ge 3$ (see below), so $\Ricc^{(m-2)}$ is well-defined. In the statement of $(ii)$, we also emphasize that tangent cones at infinity may be based at any point of $M$, not necessarily at $o$. 
}
\end{remark}

Case $(i)$ in Theorem \ref{teo_main} is easy to obtain, and might be well-known among specialists. We included it for the sake of completeness. Regarding Case $(ii)$, the curvature condition in \eqref{eq_curv_m-2} is only used to infer that $u$ has bounded gradient on $M$. In other words, as a consequence of our proof we obtain a generalization of Moser's result in \cite{jmoser} to the following

\begin{theorem}\label{teo_moser}
Let $M$ be a complete manifold with $\Ricc \ge 0$. If $u$ is a non-constant solution to  \eqref{P} and $|Du| \in L^\infty(M)$, then every tangent cone at infinity of $M$ splits off a line. 
\end{theorem}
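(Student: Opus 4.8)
The plan is to follow the proof of Theorem \ref{teo_main}, exploiting the remark recorded just after its statement: in part $(ii)$ the curvature hypothesis \eqref{eq_curv_m-2} is used only through the refined Korevaar-type gradient estimate, and only in order to deduce $|Du|\in L^\infty(M)$. Granting this bound outright, the rest of the argument is unchanged, and the proof splits according to whether $M$ is parabolic or not. If $M$ is parabolic, part $(i)$ of Theorem \ref{teo_main} gives the stronger conclusion that $M=N\times\R$ already splits off a line, so a fortiori every tangent cone at infinity does. Hence from now on $M$ is non-parabolic, so that $m\ge 3$, and we carry out the blow-down.

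Consider the minimal graph $\Sigma=\{(u(x),x)\}\subset\R\times M$ with induced metric $g$, so that $\Delta_g u=0$. Since $|Du|$ is bounded, $g$ and $\sigma$ are uniformly bi-Lipschitz, and thus $(\Sigma,g)$ inherits from $(M,\sigma)$, where $\Ricc\ge0$, all the \emph{analytic} consequences of non-negative Ricci curvature that are stable under a bi-Lipschitz change of metric: volume doubling, a scale-invariant $L^2$-Poincar\'e inequality (hence Li--Yau heat kernel and two-sided Green function bounds, the elliptic Harnack inequality, and Moser's local boundedness of subsolutions) and non-parabolicity. The central object is the angle function $\phi=\langle\nu,\partial_t\rangle=(1+|Du|^2)^{-1/2}$, which takes values in $[\phi_0,1]$ with $\phi_0=(1+\|Du\|_\infty^2)^{-1/2}>0$. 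Since $\partial_t$ is a parallel field on $\R\times M$ and $\Ricc_{\R\times M}(\nu,\nu)=(1+|Du|^2)^{-1}\Ricc_M(Du,Du)\ge0$, the function $\phi$ solves the Jacobi equation
\[
\Delta_g\phi+\big(|\II|^2+\Ricc_{\R\times M}(\nu,\nu)\big)\phi=0 ,
\]
so it is a positive, bounded, superharmonic function on $(\Sigma,g)$ whose Riesz measure $\mu$ dominates $\phi_0|\II|^2\,\di\vol_g$; moreover one has the pointwise identities $|\nabla_g u|_g^2=1-\phi^2$ and $\Hess_g u=\phi\,\II$.

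Because $(\Sigma,g)$ is non-parabolic, the Riesz decomposition writes $\phi=G^g\mu+h$ with $h$ the greatest harmonic minorant and $\phi_0\le h\le1$, whence $\int_\Sigma G^g(x_0,\cdot)\,\di\mu=\phi(x_0)-h(x_0)<\infty$ at a fixed base point $x_0$; this is the static form of a heat-flow argument in which $P_t\phi$ decreases in $t$. Feeding in the bound $G^g(x_0,y)\asymp\int_{\dist_g(x_0,y)}^\infty s\,|B^g_s|^{-1}\,\di s$ (valid under doubling and Poincar\'e), Fubini, the monotonicity in $s$ of $s\mapsto\int_{B^g_s}|\II|^2$, and volume doubling together yield the \emph{genuine} limit
\[
\lim_{R\to\infty}\frac{R^2}{|B^g_R|}\int_{B^g_R}|\II|^2\,\di\vol_g=0 ,
\]
and, via the bi-Lipschitz equivalence (under which $|\II|^2$, $|\Hess_g u|^2$ and $|\Hess_\sigma u|^2$ are mutually comparable since $|Du|$ is bounded), the same averaged decay over the $\sigma$-geodesic balls of $M$. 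This potential-theoretic estimate is the substitute — available with no curvature assumption beyond $\Ricc\ge0$ — for the Bochner argument one would otherwise use. Finally, $u$ cannot have sublinear growth: otherwise Caccioppoli together with the doubling--Poincar\'e inequalities would force $|\nabla_g u|^2=1-\phi^2\to0$ in average, hence $|B_R|^{-1}\int_{B_R}(1-\phi)\to0$; local boundedness of the non-negative subharmonic function $1-\phi$ would give $\sup_{B_R}(1-\phi)\to0$, hence $1-\phi\equiv0$ by the maximum principle, so $u$ would be constant. The same reasoning yields $\liminf_{R\to\infty}R^{-1}\osc_{B_R}u>0$.

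Now fix an arbitrary $\lambda_n\to\infty$ and pass to a blow-down $(M,\lambda_n^{-2}\sigma,x)\to(X_\infty,\di_\infty,x_\infty)$, a non-negatively Ricci-curved limit space in the sense of Cheeger--Colding (Gromov precompactness plus Bishop--Gromov for the renormalized measures). The rescaled functions $u_n=\lambda_n^{-1}(u-u(x))$ are uniformly $\|Du\|_\infty$-Lipschitz on $(M,\lambda_n^{-2}\sigma)$ and solve the minimal graph equation there, which remains uniformly elliptic along the rescalings because $|Du_n|$ stays bounded; passing to a subsequence, they converge to a Lipschitz $u_\infty$ on $X_\infty$, non-constant because $\osc_{B_1}u_\infty>0$ by the end of the previous paragraph. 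Under the rescaling the displayed decay becomes $|B_1|^{-1}\int_{B_1}|\Hess u_n|^2\to0$, and since also $|\Delta u_n|\le|\Hess u_n|$ by the minimal graph equation, the $u_n$ are $L^2$-approximately harmonic with vanishing Hessian in the limit; by Cheeger--Colding's convergence theory for functions and their Hessians under measured Gromov--Hausdorff convergence, $\Hess u_\infty\equiv0$ on $X_\infty$, so $|\nabla u_\infty|$ is a constant, necessarily $>0$ by non-constancy. A function with parallel gradient of constant positive norm on a non-negatively Ricci-curved limit space forces an isometric splitting $X_\infty=\R\times Y$ — the limit-space incarnation of the splitting used in \cite{ccm} — and since $\lambda_n$ was arbitrary, every tangent cone at infinity of $M$ splits off a line. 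I expect the heart of the difficulty to be precisely this last step: making rigorous, within Cheeger--Colding's framework, the convergence of the scale-by-scale minimal graph equation and the passage to the limit of the $L^2$-Hessian bound on a possibly singular $X_\infty$, all the more so because $(\Sigma,g)$ is only bi-Lipschitz to — and \emph{not} Gromov--Hausdorff close, at the relevant scales, to — a manifold with $\Ricc\ge0$; the gradient bound $|Du|\in L^\infty(M)$ enters only to keep the equations uniformly elliptic, to keep $\phi_0>0$, and to give the uniform Lipschitz control on the $u_n$.
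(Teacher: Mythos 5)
Your overall strategy is sound and, apart from the gradient estimate (which here is hypothesised), it genuinely diverges from the paper in its two analytic cores. For the Hessian decay \eqref{Hess_v}, the paper works with the uniformly elliptic operator $L=W\Delta_g$ on $(M,\sigma)$ and proves Theorem \ref{teo_mvLf} by a heat-kernel argument on upper level sets of $H_L$, valid without any non-parabolicity assumption; you instead work intrinsically on the graph (bi-Lipschitz transfer of doubling and Poincar\'e, hence of Harnack, heat kernel and Green's function bounds, is legitimate and is essentially the same device as the paper's uniformly elliptic reduction) and use the Riesz decomposition of $\phi=1/W$ together with $G^g(x_0,\cdot)\asymp\int_{\dist}^\infty s\,|B^g_s|^{-1}\di s$. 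This does work in the non-parabolic case, which suffices since you dispose of the parabolic case via Theorem \ref{teo_main}$(i)$: from the finiteness of $\int G^g(x_0,\cdot)\di\mu$ one splits off a small tail, uses that $G^g\gtrsim F(d)$ with $F$ decreasing, that $F(R)\gtrsim R^2/|B^g_R|$ by doubling, and that $R^2/|B^g_R|\to 0$ by non-parabolicity, to get $R^2|B^g_R|^{-1}\mu(B^g_R)\to0$ and hence the averaged decay of $\|\II\|^2$, which transfers to $|D^2u|^2$ on $\sigma$-balls. Your Caccioppoli-plus-mean-value argument for $\liminf_R R^{-1}\osc_{B_R}u>0$ is also correct and replaces the paper's stronger identity \eqref{Dv}, which the paper obtains from Proposition \ref{prop_allaLi} applied to $-W^2$ and then uses to pin down $|D_\infty u_\infty|^2=\sup_M|Du|^2$ in the limit.

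The one genuine soft spot is the final step, which you yourself flag: deducing $\Hess u_\infty\equiv0$ on the possibly singular blow-down from $|B^n_1|^{-1}\int_{B^n_1}|D^2_nu_n|^2\to0$ by invoking ``Cheeger--Colding's convergence theory for functions and their Hessians''. Classical Cheeger--Colding theory does not by itself provide an intrinsic Hessian on the limit together with lower semicontinuity of Hessian energies along pmGH convergence; what you need is a precise $W^{2,2}$-stability statement in the $\RCD$ framework (results of this type are in Ambrosio--Honda \cite{AH,AH_2} and Honda \cite{H}, but you must quote one, after first securing strong $H^{1,2}$-convergence of $u_n$ and $L^2$-control of $\Delta_n u_n$, as in \cite[Thm.~4.4]{AH}). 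The paper deliberately avoids this: it only passes the Laplacians to the limit weakly, uses \eqref{Dv} via Proposition \ref{prop_asi_grad_hess} to conclude that $u_\infty$ is harmonic with constant, non-zero gradient norm, and then applies the Bochner inequality \cite[Thm.~1.4]{H} on the $\RCD(0,m)$ limit to obtain $|D^2u_\infty|\equiv0$, followed by \cite[Lem.~1.21]{abs} for the splitting. So either import a precise Hessian-stability theorem, or add the missing gradient-average statement (your potential-theoretic machinery, or the paper's heat-semigroup Liouville argument via \cite{sal92}, gives it) and conclude as the paper does; as written, this link in your chain is not yet justified.
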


It was already observed in \cite{ccm} that a manifold $M$ with $\Ricc \ge 0$ may not split off any line despite each of its tangent cones at infinity \tcr{does}. A counterexample was constructed in \cite{kasuewashio}, and building on it we get the following result: 

\begin{proposition}\label{teo_counter}
For $m \ge 4$, there exists a complete manifold $M$ with 
	$$
	\Ricc^{(2)} \ge 0, \qquad \Ricc > 0,  \qquad |\Sec| \le \bar \kappa^2
	$$
for some constant $\bar \kappa>0$, which carries a non-constant minimal graph $u : M \to \R$ with $|Du| \in L^\infty(M)$.
\end{proposition}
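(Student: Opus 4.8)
The plan is to realise $M$ as the total space of a Riemannian submersion $\pi\colon M\to N$ with one–dimensional totally geodesic fibres $\cong\R$ over a suitable positively curved base, the fibre coordinate being the sought minimal graph. Fix a complete manifold $(N^{m-1},g_N)$ with $\Sec_N>0$ at every point and with $\Sec_N$ decaying at infinity (for instance a rotationally symmetric metric $\di r^2+\varphi(r)^2 g_{\Sph^{m-2}}$ with $\varphi(0)=0$, $\varphi'(0)=1$, $0<\varphi'<1$ and $\varphi''<0$, so that $\varphi\sim c\,r^{a}$ with $a\in(0,1)$; such bases occur in the constructions of Kasue and Washio \cite{kasuewashio}). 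Fix a $1$–form $\omega$ on $N$, and equip $M=N\times\R$ with the Kaluza–Klein metric $g=g_N+(\di t+\omega)^2$, whose curvature datum is $F\doteq\di\omega$. Then $\partial_t$ is a unit Killing field with geodesic flow lines, $\pi(y,t)=y$ is a Riemannian submersion, $F$ is its O'Neill integrability tensor, and $M\cong N\times\R$ has dimension $m\ge4$.

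\emph{Step 1: the minimal graph.} I would first verify that $u(y,t)=t$ is the desired solution. Computing the inverse of $g$ explicitly gives $|Du|_g^2=|\omega|_{g_N}^2\big/\big(2+|\omega|_{g_N}^2\big)<1$; moreover $\sqrt{\det g}=\sqrt{\det g_N}$ and all the relevant quantities are independent of $t$, whence
\[
\div_g\!\left(\frac{Du}{\sqrt{1+|Du|_g^2}}\right)=\div_{g_N}\!\left(\frac{\omega^{\sharp}}{\sqrt{2+|\omega|_{g_N}^2}}\right).
\]
Thus $u$ is a non–constant entire solution of \eqref{P} with $|Du|\in L^\infty(M)$ precisely when $\omega$ is chosen so that the right–hand vector field is $g_N$–divergence free; equivalently $\omega^{\sharp}=\sqrt{2/(1-|Y|^2)}\,Y$ for some $g_N$–divergence free $Y$ with $\sup_N|Y|_{g_N}<1$, in which case $|Du|_g^2\equiv|Y|_{g_N}^2$.

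\emph{Step 2: curvature.} By O'Neill's formulas for a Riemannian submersion with totally geodesic fibres,
\begin{gather*}
\Ricc_M(\partial_t)=\tfrac14|F|_{g_N}^2,\qquad \Ricc_M(X)=\Ricc_N(X)-\tfrac12|\iota_X F|_{g_N}^2,\\
\Sec_M(X\wedge\partial_t)=\tfrac14|\iota_X F|_{g_N}^2\ge0,\qquad \Sec_M(X\wedge Y)=\Sec_N(X\wedge Y)-\tfrac34 F(X,Y)^2,
\end{gather*}
for $X$, $Y$ horizontal and suitably normalised, together with the general O'Neill identity expressing $R_M$ through $R_N$, $F$ and $\nabla F$. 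Every contribution of the fibre direction enters as a non–negative square, so $\Ricc^{(2)}_M\ge0$ follows from $\Ricc^{(2)}_N\ge0$ (which holds since $\Sec_N>0$) up to errors controlled by $|F|^2$ and $|\nabla F|^2$; likewise $|\Sec_M|\le\bar\kappa^2$ as soon as $\Sec_N$, $|F|$ and $|\nabla F|$ are bounded; and $\Ricc_M>0$ holds once $F$ vanishes nowhere (for the fibre direction) and $|\iota_X F|_{g_N}^2<2\Ricc_N(X)$ on horizontal vectors. The substance of the proof is the construction of $(N,\omega)$ achieving all this at once: one takes $\Sec_N>0$ everywhere with, say, $\Sec_N\gtrsim r^{-2}$ at infinity, and $\omega$ with $F=\di\omega$ nowhere zero but $|F|,|\nabla F|\lesssim r^{-1}$; the $g_N$–divergence free normalisation of Step 1 is then secured by solving, for a potential function, a quasilinear minimal–surface–type elliptic equation with small data.

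\emph{Step 3: conclusion.} Since $\Ricc_M>0$, $M$ admits no parallel vector field, so being complete with $\Ricc_M\ge0$ it contains no line by the Cheeger–Gromoll splitting theorem; in particular it does not split off any line. On the other hand $M$ is complete with $\Ricc_M\ge0$ and, by Step 1, carries a non–constant minimal graph with $|Du|\in L^\infty(M)$, so Theorem \ref{teo_moser} applies and every tangent cone at infinity of $M$ splits off a line. This exhibits the desired example. I expect the principal obstacle to be exactly the tension above: $\Ricc_M(\partial_t)=\tfrac14|F|^2>0$ forces $F$ to vanish nowhere, whereas $\Ricc_M|_{TN}>0$ and $\Ricc^{(2)}_M\ge0$ force $|F|$ to be dominated by the curvature of $N$, which, since $N$ is non–compact, necessarily decays at infinity; reconciling ``$F$ nowhere zero'' with ``$F$ small at infinity at a prescribed rate'', while keeping $\omega$ a genuine, bounded–norm solution of the minimal–graph equation, is the delicate quantitative step.
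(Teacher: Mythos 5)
Your proposal is a program, not a proof: the entire burden of the statement is the explicit exhibition of an example, and that is exactly the step you leave open. In your Kaluza--Klein ansatz $g=g_N+(\di t+\omega)^2$ the fibre Killing field has unit length, so $\Ricc_M(\partial_t,\partial_t)=\tfrac14|F|^2$ forces $F=\di\omega$ to vanish nowhere, while $\Ricc_M>0$ and $\Ricc^{(2)}_M\ge0$ in horizontal directions force $|F|$, $|\nabla F|$ to be dominated by $\Sec_N$, which decays along the end of the non-compact base; on top of this you need the divergence-free normalisation of $\omega$ and control of the mixed Ricci terms $\Ricc_M(X,\partial_t)$ (which involve $\operatorname{div}_{g_N}F$ and are absent from your list of O'Neill identities, yet enter both $\Ricc_M>0$ and $\Ricc^{(2)}_M\ge0$). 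You acknowledge this tension yourself and call its resolution ``the delicate quantitative step'', but you never produce a pair $(N,\omega)$ satisfying all constraints simultaneously, so no example is actually constructed. There is also a computational slip in Step 1: with your metric, $|Du|_g^2=g^{tt}=1+|\omega|_{g_N}^2$ (equivalently $(1+|Y|^2)/(1-|Y|^2)$), not $|\omega|^2/(2+|\omega|^2)$; this does not destroy boundedness, but it shows the details were not checked.

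For comparison, the paper avoids your tension altogether by not insisting that the Killing field have constant length: it takes the doubly warped metric $\sigma=f(r)^2\di t^2+\di r^2+\eta(r)^2 h$ on $\R\times\R^+\times\Sph^{m-2}$ with the explicit Kasue--Washio choices of $f,\eta$. There the curvature operator is diagonalised by the coordinate $2$-planes, so $\Ricc^{(2)}\ge0$, $\Ricc>0$ and $|\Sec|\le\bar\kappa^2$ are verified by inspecting finitely many explicit ratios of $f,\eta$ and their derivatives; positivity of $\Ricc$ in the $\partial_t$-direction comes from the warping ($f'<0$, $f''$ controlled), not from a nowhere-vanishing twisting form. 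The minimal graph is then simply $u=at+b$, a function of $t$ alone, for which the minimal surface equation reduces to $\partial_t^2u=0$, and $|Du|=a/f$ is bounded because $f\ge c>0$. If you want to salvage your route you must either carry out the quantitative construction of $(N,\omega)$ you postponed, or switch to an ansatz (like the paper's warped product) in which the positivity of the fibre Ricci curvature does not hinge on a nowhere-vanishing curvature form.
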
 

Note that $\Ricc^{(m-2)} \ge 0$ and that, \tcr{having positive Ricci curvature,} $M$ does not split off any line. Whence, in assumption $(ii)$ of Theorem \ref{teo_main} the conclusion cannot be strengthened to a splitting of $M$ itself, \tcr{at least if $m \ge 4$.} 

When $\Sec \ge 0$, however, the above phenomenon does not happen. Leaving aside \tcb{dimension} $m=2$, covered by Case $(i)$ in Theorem \ref{teo_main}, we obtain

\begin{corollary}\label{cor_main}
Let $(M,\sigma)$ be a complete Riemannian manifold of dimension $m \geq 3$ satisfying $\Sec \ge 0$. If there exists a non-constant entire solution $u \in C^\infty(M)$ of \eqref{P} with at most linear growth on one side, then \tcr{$M$ admits a splitting} $M = N\times \R$ with the product metric $\sigma_N + \di s^2$, for some complete manifold $N$ with $\Sec_N \ge 0$, \tcr{such that} in the variables $(y,s) \in N\times \R$ it holds $u(y,s) = as + b$ for some $a,b \in \RR$. 
\end{corollary}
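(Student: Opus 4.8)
The plan is to deduce the statement from Theorem~\ref{teo_main} by induction on $m=\dim M$, using the rigidity of nonnegatively curved spaces to upgrade ``every tangent cone at infinity splits off a line'' to ``$M$ itself splits''. First I would dispose of the parabolic case, which also serves as the base case $m=2$: when $m=2$, $\Sec\ge 0$ forces $\vol(B_r)\le\pi r^2$ by Bishop--Gromov, so \eqref{eq_varopo_intro} holds and $M$ is parabolic by Remark~\ref{rem_parabolic}; and whenever $M$ is parabolic, Theorem~\ref{teo_main}$(i)$ already gives $M=N\times\R$ with $u(y,s)=as+b$, while $\Sec_N\ge 0$ because $N$ is a totally geodesic factor of the Riemannian product $M=N\times\R$ (the Gauss equation identifies $\Sec_N$ with the restriction of $\Sec_M$ to planes tangent to $N$). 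This leaves only the inductive step with $M$ non-parabolic, and hence $m\ge 3$.

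So assume $m\ge 3$, $M$ non-parabolic, and that the corollary holds in all dimensions smaller than $m$. Since $\Sec\ge 0$ implies $\Ricc\ge 0$ and, by Definition~\ref{def_Ricc_l} together with the chain of implications following it, $\Ricc^{(m-2)}\ge 0$, condition \eqref{eq_curv_m-2} holds trivially with $\bar\kappa=0$; thus Theorem~\ref{teo_main}$(ii)$ applies and every tangent cone at infinity of $M$ splits off a line. Next I would invoke the structure theory of manifolds with $\Sec\ge 0$: because Toponogov triangle comparison is scale invariant and passes to blow-down limits, a complete manifold with $\Sec\ge 0$ admitting a tangent cone at infinity that splits off a line must contain a line — this is the geometric mechanism underlying \cite{abfp}, which is available to us once the analytic input (``tangent cones split'') has been secured by Theorem~\ref{teo_main}. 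By the Cheeger--Gromoll (equivalently, Toponogov) splitting theorem, $M$ then splits isometrically as $M=N\times\R$ with product metric $\sigma_N+\di s^2$, with $\Sec_N\ge 0$ and $\dim N=m-1$.

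It remains to descend $u$ through this splitting. Write $x=(y,s)\in N\times\R$ and set $v:=\partial_s u=\langle\nabla u,\partial_s\rangle$. Since $\Sec\ge 0$ yields, via Korevaar's estimate as in Theorems~\ref{teo_main}--\ref{teo_moser}, that $|Du|\in L^\infty(M)$, the function $v$ is bounded and solves the linearization of \eqref{P} at $u$, a uniformly elliptic divergence-form equation on $M=N\times\R$ (equivalently, $v$ is harmonic for the graph metric $F^*(\di t^2+\sigma)$, which is uniformly equivalent to $\sigma$). Granting the claim that $v$ is constant, one gets $u(y,s)=u(y,0)+as$ with $a=v$, and a direct computation shows that $\tilde u_0:=(1+a^2)^{-1/2}u(\cdot,0)$ is an entire solution of \eqref{P} on $(N,\sigma_N)$ with at most linear growth on one side. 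If $\tilde u_0$ is non-constant, the inductive hypothesis applied to $N$ produces a splitting of $N$ on which $\tilde u_0$, hence $u_0$, is affine; if $\tilde u_0$ is constant, $u(y,s)=as+b$ is already affine on $M$ (with $a\neq 0$, since $u$ is non-constant). In all cases $\Hess u\equiv 0$, so $\nabla u$ is a nontrivial parallel vector field; normalizing its (complete) flow and using the integrable orthogonal distribution $(\nabla u)^\perp$ produces a splitting $M=N'\times\R$ with $N'=u^{-1}(b)$ totally geodesic (hence $\Sec_{N'}\ge 0$) and $u(y,s)=|\nabla u|\,s+b$, which is the asserted conclusion.

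The main obstacle is the claim that $v=\partial_s u$ is constant: this is exactly where $\Sec\ge 0$ must be used beyond $\Ricc\ge 0$, since Proposition~\ref{teo_counter} exhibits, for $m\ge 4$, a non-parabolic $M$ with $\Ricc^{(m-2)}\ge 0$ carrying a bounded-gradient minimal graph that is \emph{not} affine, so no argument resting on $\Ricc\ge 0$ alone can close the gap. On $M=N\times\R$ with $\Ricc\ge 0$, however, one has volume doubling and a scale-invariant Poincar\'e inequality, hence a parabolic Harnack inequality for the uniformly elliptic operator governing $v$; combining this with the heat-equation techniques developed in the body of the paper (applied to $v$ as a bounded ancient solution of the associated parabolic equation), or alternatively running the strong maximum principle for minimal hypersurfaces on the graphs of $u$ and of its $\partial_s$-translates together with a maximum principle at infinity, should force $v$ to be constant. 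I expect the clean implementation of this Liouville-type property for bounded solutions of the linearized minimal surface equation on $N\times\R$ to be the technically delicate point of the argument.
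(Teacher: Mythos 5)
Your overall architecture coincides with the paper's: Theorem \ref{teo_main} gives $|Du|\in L^\infty(M)$ and the splitting of all tangent cones; under $\Sec\ge 0$ this upgrades to a splitting $M=N\times\R$ via \cite[Thm. 4.6]{abfp}; one then shows $\partial_s u$ is constant, descends to a solution of \eqref{P} on $N$, and iterates (the paper iterates down to dimension $2$, where parabolicity takes over, exactly as in your base case), finishing with a rotation of the Euclidean factor, equivalent to your parallel-gradient argument. However, your proof has an admitted gap precisely at the pivotal step: you only say ``granting the claim that $v=\partial_s u$ is constant,'' and you do not close it. Moreover your diagnosis of why this step is hard is mistaken. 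It does not require $\Sec\ge 0$ beyond the production of the splitting: once $M=N\times\R$ is in hand, $\partial_s$ is a Killing field of $(M,\sigma)$, and the computation already established in the Preliminaries (the quotient of the two angle functions, equation \eqref{killing-Du}) shows that $v=\sigma(Du,\partial_s)$ satisfies $\LL_W v=0$, i.e. $\diverge\big(W^{-1}g^{ij}v_i\partial_{x_j}\big)=0$, a divergence-form equation which is uniformly elliptic because $|Du|\in L^\infty(M)$. Since $v$ is bounded and $\Ricc\ge 0$, the Liouville theorem of Saloff-Coste \cite[Theorem 7.4]{sal92} (exactly the doubling--Poincar\'e--Harnack machinery you gesture at) gives at once that $v$ is constant; no heat-equation argument, ancient-solution analysis, or maximum principle at infinity is needed, and nothing beyond $\Ricc\ge 0$ enters at this stage.

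Your appeal to Proposition \ref{teo_counter} to argue that ``no argument resting on $\Ricc\ge 0$ alone can close the gap'' is therefore a misreading of where the sectional hypothesis is used: in that counterexample $M$ has $\Ricc>0$ and does not split, so there is no global Killing field $\partial_s$ to feed into \eqref{killing-Du}; the role of $\Sec\ge 0$ in the corollary is exhausted in guaranteeing the splitting of $M$ itself, after which the constancy of the Killing component of $Du$ is a consequence of $\Ricc\ge 0$ and the bounded-gradient estimate. A smaller inaccuracy: $v$ is not harmonic for the graph metric $g$ (the correct equation carries the drift term, $\Delta_g v-2\langle\nabla\log W,\nabla v\rangle=0$); what saves you is precisely the divergence-form rewriting above, which you also state and which is the form to which \cite{sal92} applies. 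With these two points repaired, your induction scheme reproduces the paper's proof.
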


The corresponding problem for harmonic functions was also studied by A. Kasue \cite{kasue}. Corollary \ref{cor_main} relates to the results obtained in \cite{liwang_crelle} by P. Li and J. Wang. There, the authors study complete, stable minimal hypersurfaces $\Sigma \to \overline{M}$ properly immersed into a complete manifold $\overline{M}$ whose sectional curvature is non-negative, and prove that either $\Sigma$ has only one end or $\Sigma$ is a totally geodesic cylinder $P \times \R$, for some compact manifold $P$ with non-negative sectional curvature. Our setting falls into their framework, since a minimal graph in $\R \times M$ is stable, properly embedded and $\overline{M} = \R \times M$ has non-negative sectional curvature. However, our conclusion is stronger, since it allows $M$ to have only one end and it also implies a splitting of $M$ itself.

To conclude, we prove the next result for graphs with slower than linear growth on one side, that should be compared to \cite[Theorem 3.6]{dingjostxin} and \cite[Theorem 1.4]{ding_new}.

\begin{theorem}\label{teo_slower}
Let $(M,\sigma)$ be a complete Riemannian manifold of dimension $m \geq 2$ with $\Ricc \ge 0$, and let $u \in C^\infty(M)$ solve \eqref{P} on $M$ and satisfy
	\begin{equation}\label{eq_slower}
	\lim_{r(x) \to \infty} \frac{u_-(x)}{r(x)} = 0,
	\end{equation}
where $u_-(x) = \max\{ -u(x),0\}$. Assume that either
	\begin{itemize}
	\item[-] $M$ is parabolic, or 
	\item[-] $M$ is non-parabolic and there exists an origin $o \in M$ such that, denoting with $r$ the distance from $o$, 
	$$
	\Ricc^{(m-2)}(\nabla r) \geq - \frac{\bar \kappa^2}{1+r^2} \qquad \text{on } \, M,
	$$
for some constant $\bar \kappa \ge 0$. 
	\end{itemize}
Then, $u$ is constant. 
\end{theorem}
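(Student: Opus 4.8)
The plan is to argue by contradiction: I assume $u$ is non-constant and derive a contradiction with \eqref{eq_slower}. The parabolic case is immediate from Theorem~\ref{teo_main}$(i)$, which then applies and provides a splitting $M=N\times\R$, with the product metric $\sigma_N+\di s^2$, in which $u(y,s)=as+b$; since $u$ is non-constant, $a\neq 0$. Fix any $y_0\in N$ and follow the ray $t\mapsto\gamma(t)=(y_0,-\operatorname{sgn}(a)t)$, $t\ge 0$. Along $\gamma$ one has $u(\gamma(t))=-|a|t+b$, hence $u_-(\gamma(t))=|a|t-b$ for $t$ large, while $r(\gamma(t))=t+O(1)$ since the slice $\{y_0\}\times\R$ is a minimizing geodesic; therefore $u_-(\gamma(t))/r(\gamma(t))\to|a|>0$ as $t\to\infty$, contradicting \eqref{eq_slower} (note $r(\gamma(t))\to\infty$). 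Thus $u$ is constant when $M$ is parabolic.

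Assume now that $M$ is non-parabolic (so $m\ge 3$) and still that $u$ is non-constant. Since \eqref{eq_slower} forces $u$ to have at most linear growth on one side, the curvature hypothesis $\Ricc^{(m-2)}(\nabla r)\ge-\bar\kappa^2/(1+r^2)$ together with the refined Korevaar gradient estimate --- precisely the step in the proof of Theorem~\ref{teo_main}$(ii)$ where the curvature bound is invoked --- yields $|Du|\in L^\infty(M)$; in particular, with $L=\|Du\|_{L^\infty(M)}$, the function $u$ is $L$-Lipschitz on $(M,\sigma)$, and we are in the setting of Theorem~\ref{teo_moser}. Fix an arbitrary tangent cone at infinity, $(M,\lambda_n^{-1}\dist_\sigma,o)\to(X_\infty,\di_\infty,o_\infty)$ with $\lambda_n\to\infty$. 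The normalized functions $\tilde u_n\doteq\lambda_n^{-1}(u-u(o))$ are $L$-Lipschitz with respect to $\lambda_n^{-1}\dist_\sigma$ and vanish at $o$, so by the Arzel\`a--Ascoli theorem along Gromov--Hausdorff convergence a subsequence of $\{\tilde u_n\}$ converges, uniformly on compact sets, to an $L$-Lipschitz function $u_\infty$ on $X_\infty$ with $u_\infty(o_\infty)=0$. Retracing the proof of Theorem~\ref{teo_moser} --- this is where the heat-equation techniques enter --- one obtains that the isometric splitting $X_\infty=Y\times\R$ is realised through $u_\infty$ itself: $u_\infty$ is an affine reparametrisation of the coordinate of the $\R$-factor, and in particular it is \emph{non-constant} (this is how the line in the cone is produced, and it encodes the fact that a non-constant $u$ under these hypotheses does possess genuinely linear growth along a suitable ray).

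It remains to see that \eqref{eq_slower} forces $u_\infty$ to be constant, giving the contradiction. Given $\eps>0$, choose $R_\eps$ with $u(x)\ge-\eps\,r(x)$ whenever $r(x)\ge R_\eps$. Fix $z\in X_\infty$ with $\di_\infty(o_\infty,z)=\rho>0$ and approximate it by points $x_n\in M$ with $\lambda_n^{-1}\dist_\sigma(o,x_n)\to\rho$; then $\dist_\sigma(o,x_n)\to\infty$, so for $n$ large
\[
\tilde u_n(x_n)=\lambda_n^{-1}\big(u(x_n)-u(o)\big)\ \ge\ -\eps\,\lambda_n^{-1}\dist_\sigma(o,x_n)-\lambda_n^{-1}u(o)\ \longrightarrow\ -\eps\rho .
\]
Hence $u_\infty(z)\ge-\eps\rho$ for every $\eps>0$, i.e. $u_\infty(z)\ge 0$; since $z$ (and $\rho>0$) was arbitrary and $\rho=0$ is trivial, we get $u_\infty\ge 0=u_\infty(o_\infty)$ on $X_\infty$. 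Thus $u_\infty$ attains its minimum on $X_\infty=Y\times\R$, which is impossible for a non-constant affine function of the $\R$-coordinate. Therefore $u$ is constant also in the non-parabolic case.

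The main obstacle is the middle paragraph: Theorem~\ref{teo_main}$(ii)$ (or Theorem~\ref{teo_moser}) cannot be used as a black box here, since its statement only asserts that \emph{some} line splits off the cone and does not pin down $u_\infty$. One must extract from its proof the finer information that the splitting factor of $X_\infty$ is exactly the blow-down limit of the normalised $u$ --- equivalently, that non-constancy of $u$ propagates to non-constancy of $u_\infty$ --- which is precisely what the new refinement of Korevaar's estimate and the heat-equation techniques are designed to provide. By comparison, the remaining ingredients --- the $L^\infty$ bound on $Du$, the Gromov--Hausdorff Arzel\`a--Ascoli compactness, and the passage of \eqref{eq_slower} to the limit --- are routine once that input is available, the uniform Lipschitz bound being exactly what makes the limit $u_\infty$ well defined.
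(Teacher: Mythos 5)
Your argument is correct, but in the non-parabolic case it follows a genuinely different route from the paper's. The parabolic case is handled as in the paper (the paper simply invokes Theorem \ref{teo_main}$(i)$; your ray argument is the obvious elaboration). For the non-parabolic case the paper argues directly at the PDE level, following \cite[Theorem 3.6]{dingjostxin}: after Corollary \ref{cor_boundedgrad} gives $|Du|\in L^\infty(M)$, the operator $L=W\Delta_g$ is uniformly elliptic, and Saloff-Coste's Harnack inequality upgrades the one-sided hypothesis \eqref{eq_slower} to the two-sided bound $|u|=o(r)$; a Caccioppoli inequality $\int\varphi^2|Du|^2\le 4\alpha^2\int u^2|D\varphi|^2$ with the standard cutoff and volume doubling then yields $\frac{1}{|B_R|}\int_{B_R}|Du|^2\le C\eps$ for large $R$, which combined with the mean value identity \eqref{Du_L2} forces $\sup_M|Du|=0$. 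You instead run a blow-down contradiction: you pass the one-sided sublinear bound to the tangent cone to get $u_\infty\ge 0=u_\infty(o_\infty)$, which is incompatible with $u_\infty$ being a non-constant affine function of the split coordinate. As you correctly flag, this requires opening up the proof of Theorem \ref{teo_main}$(ii)$ rather than quoting its statement: what is needed (and what that proof does establish, via $|D_\infty u_\infty|^2\equiv\sup_M|Du|^2>0$, harmonicity, the vanishing Hessian from Bochner, and \cite[Lem. 1.21]{abs}) is that the splitting of $X_\infty$ is realized by $u_\infty$ itself. The trade-off: the paper's route stays at the level of Harnack/Caccioppoli estimates plus \eqref{Du_L2} and avoids re-entering the $\RCD$ convergence machinery, so it is shorter and self-contained; your route reuses the full tangent-cone analysis and is not a black-box application, but it has the advantage of needing only the one-sided information (no Harnack upgrade of \eqref{eq_slower} to $|u|=o(r)$ is required), and it makes transparent why sublinear one-sided growth is incompatible with the line that a non-constant $u$ would produce at infinity.
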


\begin{remark}[\textbf{More general curvature bounds}]
\emph{It is natural to wonder whether conditions $\Ricc \ge 0$ or $\Sec \ge 0$ can be weakened still allowing for some rigidity of $u$ and $M$. In this respect we quote \cite[Example 3.1]{bcmmpr}, where the authors constructed a manifold $M$ of dimension $m \ge 3$ with two ends, satisfying
\[
\Sec \ge - \frac{\bar{\kappa}^2}{1+r^2}, \qquad \vol(B_r) \le C r^m
\]
for constants $\bar\kappa,C>0$ and supporting a non-constant, bounded entire minimal graph. On the other hand, the existence of such a solution to \eqref{P} is forbidden if $M$ has asymptotically non-negative sectional curvature and only one end, see \cite{chh_nonexistence}. An interesting class for which one might try to obtain rigidity results is that of manifolds with quadratically decaying (or asymptotically non-negative) Ricci curvature and linear volume growth, see \cite{sormani}.
}
\end{remark}

\subsection*{Strategy of the proof} 

Case $(i)$ in Theorem \ref{teo_main} is a direct consequence of the parabolicity of $(M,\sigma)$, which in our setting can be transplanted to the graph of $u$. In particular, since every surface with $\Ricc \ge 0$ is parabolic, the result holds if $m=2$, so we focus on dimension $m \ge 3$. In \cite{bdgm}, the authors obtain $(\ber 3)$ in $\R^m$ via the following steps: 
	\begin{itemize}
	\item[(a)] a sharp gradient estimate, implying that a solution $u \in C^\infty(\R^m)$ of \eqref{P} with at most linear growth on one side satisfies $|Du| \in L^\infty(\R^m)$;
	\item[(b)] an argument of Moser \cite{jmoser}: since $|Du| \in L^\infty(\R^m)$, for each coordinate field $\partial_j$ the partial derivative $\partial_j u$ is a bounded solution to  a uniformly elliptic PDE. The global Harnack inequality implies that $\partial_j u$ is constant, which implies that $u$ is affine. 
	\end{itemize} 

Step (b) cannot be implemented on manifolds, which in general lack parallel fields. An alternative idea was proposed in \cite{ccm} to study harmonic functions, a blowdown argument which exploits the convergence theory of manifolds with $\Ricc \ge 0$. Our strategy closely follows the one in \cite{ccm}, and can be split into the following steps:
	\begin{itemize}
	\item[(a)] we prove that a solution $u \in C^\infty(M)$ of \eqref{P} with at most linear growth on one side satisfies $|Du| \in L^\infty(M)$;
	\item[(b)] for fixed $x_0 \in M$, we show that the functions $|Du|$ and $|D^2 u|$ satisfy 
	\begin{align}
	\label{Dv}
	& \lim_{R\to\infty} \frac{1}{|B_R(x_0)|} \int_{B_R(x_0)} |Du|^2 \di x = \sup_M |Du|^2, \\
	\label{Hess_v}
    & \lim_{R\to\infty} \frac{R^2}{|B_R(x_0)|} \int_{B_R(x_0)} |D^2 u|^2 \di x = 0,
\end{align}
where $B_R(x_0)$ is the geodesic ball of radius $R$ and center $x_0$ in $(M,\sigma)$.
	\item[(c)] we use the blowdown argument to guarantee the splitting of any tangent cone at infinity with base point $x_0$.  
	\end{itemize}	
To be more precise, Step (a) will be achieved by assuming 
	\begin{equation}\label{eq_ourass}
	\Ricc \ge 0 \qquad \text{and} \qquad \Ricc^{(m-2)}(\nabla r) \ge - \frac{\bar \kappa^2}{1+r^2},
	\end{equation}
while (b) will be shown by requiring 
	\begin{equation}\label{eq_perb}
	\Ricc \ge 0 \qquad \text{and} \qquad |Du| \in L^\infty(M).
	\end{equation}
Though the strategy is the same as that in \cite{ccm}, we emphasize that the techniques in the current literature to prove (a) (respectively, (b)) do not apply under the sole assumptions in \eqref{eq_ourass} (respectively, in \eqref{eq_perb}). We shall justify this claim in the next sections. Our strategy to obtain (a) is to refine a method due to N. Korevaar \cite{korevaar}, see Theorem \ref{teo_gradient_decay} below, while to get (b) in our needed generality we exploit heat equation techniques, inspired by works of P. Li \cite{liann} and L. Saloff-Coste \cite{sal92}. In this respect, we underline Theorem \ref{teo_mvLf} below, yielding to \eqref{Hess_v}, that in the stated generality seems to us new and of an independent interest.

%

\section{Preliminaries}
	
We briefly review some formulas for minimal graphs that will be used later on. In local coordinates $(x^i)$ on $M$, \tcb{the background metric $\sigma$ and the graph metric $g = F^*(\di t^2 + \sigma)$ write as} 
	$$
	\sigma = \sigma_{ij} \di x^i \otimes \di x^j, \quad g = g_{ij} \di x^i \otimes \di x^j, \quad \di u = u_i \di x^i,
	$$
and $g_{ij} = \sigma_{ij} + u_i u_j$. Letting $\sigma^{ij}$ and $g^{ij}$ be the components of the inverse matrices of $(\sigma_{ij})$, $(g_{ij})$, respectively, it holds
	$$
	g^{ij} = \sigma^{ij} - \frac{u^iu^j}{W^2},
	$$
	where $W = \sqrt{1+|Du|^2}$ and $u^i = \sigma^{ij}u_j$. In general, if $\phi \in C^1(M)$ then \tcb{the symbols $D\phi$ and $\nabla\phi$ will denote the gradients of $\phi$ in the metrics $\sigma$ and $g$, respectively,} and in local notation we write
	$$
	\di\phi = \phi_i \di x^i, \qquad D\phi = \phi^i \partial_{x_i} \equiv \sigma^{ij} \phi_j \partial_{x_i}, \qquad \nabla \phi = g^{ij} \phi_j \partial_{x_i}.
	$$
Differentiating the upward pointing unit normal vector ${\bf n} = W^{-1}(\partial_t - u^i e_i)$, the second fundamental form $\II$ in the direction of ${\bf n}$ has components 
	\begin{equation}\label{eq_IIu}
	\II_{ij} = \frac{u_{ij}}{W}, 
	\end{equation}
where $u_{ij}$ are the components of the Hessian $D^2 u$ in the metric $\sigma$. Let $H = g^{ij}h_{ij}$ be the mean curvature, which we assume to vanish. Using the relation 
$$
\Gamma^k_{ij} - \gamma^k_{ij} = \frac{u^k u_{ij}}{W^2}
$$
between the Christoffel coefficients $\Gamma^k_{ij}$ of $g$ and $\gamma^k_{ij}$ of $\sigma$, for every $\phi : M \to \R$ the Laplace-Beltrami operator $\Delta_g$ of $g$ writes as
\[
\Delta_g \phi = g^{ij}\phi_{ij} - \phi_ku^k \frac{H}{W} = g^{ij}\phi_{ij},
\]
where we used the minimality of $\Sigma$. Also, $\Delta_g$ has the following local expression:
	\begin{equation}\label{eq_Lapla}
	\Delta_g \phi = \tcb{\frac{1}{\sqrt{|g|}} \partial_{x_j} \left( \sqrt{|g|} g^{ij} \phi_i \right)} = \frac{1}{W} \diver (W g^{ij} \phi_i \partial_{x_j})
	\end{equation}
	where $\diverge$ is, as before, the divergence operator in $(M,\sigma)$ and \tcb{$|g|$ is the determinant of $(g_{ij})$}. 
Next, for every Killing field $\bar X$ defined in $\R \times M$, the angle function $\Theta_{\bar X} \doteq \langle {\bf n}, \bar X \rangle$ solves the Jacobi equation
\begin{equation} \label{Jeq}
	\Delta_g \Theta_{\bar X} + \Big( \|\II\|^2 + \overline{\Ricc}({\bf n},{\bf n})\Big) \Theta_{\bar X} \tcr{=0},
\end{equation}
with $\overline{\Ricc}$ the Ricci curvature of $\R \times M$. This is the case, for instance, of the angle function $\Theta_{\partial_t} = \langle \mathbf{n},\partial_t \rangle = W^{-1}$ associated to the Killing field $\partial_t$. As a consequence, $W$ satisfies
\begin{equation}\label{eq_W}
	\LL_W W = \left(\|\II\|^2 + \overline{\Ricc}({\bf n}, {\bf n})\right) W,
\end{equation}
where we defined 
	\[
	\LL_W \phi \doteq W^{2}\diverge_g \big( W^{-2} \nabla \phi) = \Delta_g \phi - 2 \langle \nabla \log W, \nabla \phi \rangle.
	\]
Observe that, in terms of the metric $\sigma$, 
	\begin{equation}\label{def_LW}
	\LL_W \phi = W \diver \big( W^{-1} g^{ij} \phi_i \partial_{x_j}\big).
	\end{equation}
If $X$ is a Killing field in $(M,\sigma)$ then we can extend it by parallel transport on $\R\times M$ to a Killing field $\bar X$ satisfying $\langle \partial_t,\bar X \rangle = 0$, with corresponding angle function $\Theta_{\bar X} = \langle \mathbf{n}, \bar X \rangle = -W^{-1} \sigma(Du, X)$. Since \eqref{Jeq} holds both for $\Theta_{\partial_t}$ and for $\Theta_{\bar X}$, it can be checked that the quotient 
	\[
	v \doteq -\Theta_{\bar X} / \Theta_{\partial_y} = \sigma (Du, X)
	\]
is a solution to 
\begin{equation} \label{killing-Du}
	\LL_W v = 0.
\end{equation}

We next discuss the implications of $\ell$-th Ricci curvature lower bounds. Hereafter, we set $\R^+ = (0,\infty)$ and $\R^+_0 = [0,\infty)$. 

\begin{proposition}\label{prop_distances}
Let $(M, \sigma)$ be a complete manifold of dimension $m \ge 2$ satisfying 
	\[
	\Ricc^{(\ell)}(\nabla r) \ge - H(r) \qquad \text{for } \, \ell = \max\{1,m-2\}, 
	\]
where $r$ is the distance from a fixed origin $o \in M$, and $0 \le H \in C(\RR^+_0)$. Let $h \in C^2(\R^+_0)$ solve 
	\begin{equation}\label{eq_compa}
	\left\{ \begin{array}{lcl}
	h'' - H h \ge 0 \qquad \text{on } \, \R^+, \\[0.2cm]
	\disp \liminf_{t \to 0} \left( \frac{h'}{h} - \frac{1}{t}\right) \ge 0.
	\end{array}\right.
	\end{equation}
Let $u : M \to \R$ solve \eqref{P}. Then, denoting with $\Delta_g$ the Laplacian in the graph metric $g$, 
	\[
	\Delta_g r \le m\frac{h'(r)}{h(r)} 
	\] 
pointwise on $M \backslash (\{o\} \cup \cut(o))$ and in the barrier sense on $M \backslash \{o\}$. 
\end{proposition}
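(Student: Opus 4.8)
The plan is to compare $\Delta_g r$ with the Laplacian of $r$ in the background metric $\sigma$, and then apply the $\ell$-th Ricci Laplacian comparison for $\sigma$ with $\ell = \max\{1,m-2\}$. First I would fix a point $x \in M \setminus (\{o\} \cup \cut(o))$, so that $r$ is smooth near $x$, and work in an orthonormal frame $\{e_1,\dots,e_m\}$ for $\sigma$ at $x$ with $e_m = \nabla r = Dr$ (note $|Dr|_\sigma = 1$). Using the decomposition $g^{ij} = \sigma^{ij} - u^iu^j/W^2$ together with the local expression for $\Delta_g$ coming from \eqref{eq_Lapla}, one computes
\[
\Delta_g r = g^{ij} r_{ij} - g^{ij}\Gamma^k_{ij} r_k,
\]
and substituting the relation $\Gamma^k_{ij} - \gamma^k_{ij} = u^k u_{ij}/W^2$ one sees that the correction terms involving $u$ combine with the minimality equation $g^{ij}u_{ij} = 0$ to yield
\[
\Delta_g r = g^{ij} r_{ij} = \sigma^{ij} r_{ij} - \frac{u^i u^j}{W^2} r_{ij} = \Delta_\sigma r - \frac{1}{W^2}\, D^2 r(Du,Du),
\]
where $D^2 r$ is the $\sigma$-Hessian of $r$. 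The point is that $D^2 r(Du,Du) \ge 0$: indeed $D^2 r$ is positive semidefinite on $(\nabla r)^\perp$ (it is the second fundamental form of the geodesic sphere, which is nonnegative under a lower Ricci-type bound, but more simply it annihilates the radial direction and has no negative eigenvalues precisely because $H \ge 0$ forces the comparison model to be convex — this needs a short justification), so the subtracted term is $\ge 0$ and hence $\Delta_g r \le \Delta_\sigma r$.

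Next I would invoke the $\ell$-th Ricci Laplacian comparison theorem for $(M,\sigma)$. Under $\Ricc^{(\ell)}(\nabla r) \ge -H(r)$ with $\ell = \max\{1,m-2\}$, the standard Riccati/index-form argument applied to a judiciously chosen $(\ell+1)$-dimensional subspace containing the radial direction gives $\Delta_\sigma r \le (m-1)\frac{h'(r)}{h(r)}$ wherever $r$ is smooth, where $h$ solves the Jacobi-type inequality \eqref{eq_compa}; the initial condition $\liminf_{t\to 0}(h'/h - 1/t)\ge 0$ is exactly what is needed to start the Riccati comparison at $o$. Combining this with the previous step gives $\Delta_g r \le (m-1)\frac{h'(r)}{h(r)}$, which is even slightly stronger than the claimed bound $m\,h'(r)/h(r)$ (one could also absorb the extra term if $h'/h \ge 0$, which holds since $h''\ge Hh\ge 0$ and the initial condition forces $h' >0$ for $t>0$; either way the stated inequality follows). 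Finally, to upgrade from the pointwise statement on $M\setminus(\{o\}\cup\cut(o))$ to the barrier sense on all of $M\setminus\{o\}$, I would use the classical Calabi trick: for $x_0 \in \cut(o)$ pick a minimizing geodesic from a nearby point $o_\eps$ on the geodesic to $o$, let $r_\eps(x) = \eps + \dist_\sigma(o_\eps,x)$, which is smooth near $x_0$, satisfies $r_\eps \ge r$ with equality at $x_0$, and obeys the same differential inequality (with $h$ shifted), so $r_\eps$ is the required upper barrier.

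The main obstacle I expect is the first step — verifying that the $u$-dependent correction terms assemble exactly into $-W^{-2}D^2r(Du,Du)$ and that this quantity is nonnegative. The algebra is routine once one is careful that $\Delta_g$ must be computed with the $g$-Christoffel symbols (not just the naive $g^{ij}r_{ij}$), and the key cancellation is $g^{ij}u_{ij}=0$; the sign of $D^2r(Du,Du)$ is where the hypothesis $H\ge 0$ (equivalently, convexity of the model $h$) is genuinely used, since for general curvature bounds $D^2 r$ need not be semidefinite. A secondary technical point is making sure the $\ell$-th Ricci comparison is applied with the correct index $\ell = \max\{1,m-2\}$: one needs an $(\ell+1)$-plane spanned by $\nabla r$ and $\ell$ orthonormal directions, and the infimum in Definition~\ref{def_Ricc_l} must be taken over the right subspaces — this is standard but worth stating precisely.
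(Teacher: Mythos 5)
The identity you derive in the first step is correct and is exactly the one the paper uses: minimality gives $\Delta_g r = g^{ij}r_{ij} = \Delta_\sigma r - W^{-2}\,D^2r(Du,Du)$. The gap is the sign claim that follows it. It is simply false that $D^2r(Du,Du)\ge 0$ under the stated hypotheses: a \emph{lower} bound on curvature (here $\Ricc^{(\ell)}(\nabla r)\ge -H$ with $H\ge 0$) only yields \emph{upper} bounds on the Hessian of the distance function (Hessian/Laplacian comparison from above); lower bounds on $D^2 r$ would require an \emph{upper} curvature bound, which is precisely what the paper is trying to avoid. Positive curvature is fully compatible with the hypotheses, and where curvature is positive the restriction of $D^2r$ to $(\nabla r)^\perp$ can have negative eigenvalues: e.g.\ on $S^2\times\R^{m-2}$ one has $\Sec\ge 0$, hence $\Ricc^{(\ell)}\ge 0$, yet $D^2r$ has negative directions at moderate distances, and nothing prevents $Du$ from pointing in such a direction. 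So the intermediate inequality $\Delta_g r\le\Delta_\sigma r$ fails, and with it your conclusion $\Delta_g r\le (m-1)h'/h$ — which is in fact stronger than what the proposition claims, a further sign something is off.

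The paper's proof handles exactly the scenario your argument ignores, namely that the $g$-rescaled direction $\nu=Du/|Du|$ may be a direction where $D^2r(\nu,\nu)<0$. Writing $\Delta_g r = W^{-2}\tr(D^2r)+\bigl(1-W^{-2}\bigr)\sum_{\alpha=2}^m D^2r(e_\alpha,e_\alpha)$ in a $\sigma$-orthonormal basis $\{\nu,e_\alpha\}$, it bounds the partial trace over the hyperplane $\nu^\perp$ (which in general does not contain, nor is orthogonal to, $\nabla r$) by the sum of the $m-1$ \emph{largest} eigenvalues of $D^2r$, and this sum is controlled by $(m-1)h'/h$ via the comparison theorem of \cite{maripessoa}. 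This is exactly where the hypothesis on $\Ricc^{(m-2)}$, rather than on $\Ricc$ alone, is genuinely needed: a plain Ricci lower bound controls only the full trace $\Delta_\sigma r$, not partial traces over arbitrary hyperplanes. Your proposal uses the $\ell$-th Ricci bound only through $\Delta_\sigma r\le (m-1)h'/h$, i.e.\ through the induced Ricci bound, which is another indication that the reduction to $\Delta_\sigma r$ cannot be the right mechanism. The algebraic identity, the observation that $h'/h>0$, and the Calabi-trick upgrade to the barrier sense in your write-up are all fine; the argument fails only, but decisively, at the sign of $D^2r(Du,Du)$.
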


\begin{proof}
Outside of $\{o\} \cup \cut(o)$, denote by $\{\lambda_j(D^2r)\}$ the eigenvalues of $D^2r$ in increasing order. The comparison theorem in \cite[Prop. 7.4]{maripessoa} guarantees that 
	\begin{equation}\label{eq_Pkp}
	\sum_{j = 2}^m \lambda_j(D^2r) \le (m-1)\frac{h'(r)}{h(r)}
	\end{equation}
pointwise on $M \backslash (\{o\} \cup \cut(o))$ and in the barrier sense on $M \backslash \{o\}$. Note that the initial assumptions on $h$ therein are $h(0) = 0$, $h'(0) \ge 1$, but the same proof works for the more general \eqref{eq_compa}. In this respect, note that $h'>0$ on $\R^+$ follows from $H \ge 0$. \par
To estimate $\Delta_g r = g^{ij}r_{ij}$, pick a point $x$ where $r$ is smooth. If $Du(x) = 0$, then $g^{ij} = \tcr{\sigma}^{ij}$. In our assumptions, the Ricci curvature satisfies 
	\[
	\Ricc(\nabla r,\nabla r) \ge - (m-1)H(r),
	\]
so by the Laplacian comparison theorem and since $h'>0$, 
	\[
	\Delta_g r = \tr(D^2 r) \le (m-1)\frac{h'(r)}{h(r)} \le m \frac{h'(r)}{h(r)}. 
	\] 
Assume that $Du(x) \neq 0$, write $\nu = Du/|Du|$ in a neighbourhood of $x$ and complete it to a local \tcr{$\sigma$-}orthonormal basis $\{\nu, e_\alpha\}$ with $2 \le \alpha \le m$. Note that $g^{ij}$ is diagonalized with eigenvalues $W^{-2} \le 1$ in direction $\nu$ and $1$ in directions $\{e_\alpha\}$. Expressing $\Delta_g r$ in the basis $\{e_\alpha, \nu\}$ we get
\begin{equation}\label{hessvarrho}
\begin{array}{lcl}
\Delta_g r & = & \disp \frac{1}{W^2} D^2r(\nu,\nu) + \sum_{\alpha=2}^m D^2r(e_\alpha,e_\alpha) \\[0.4cm]
& = & \disp \frac{1}{W^2} \left[ \tr(D^2r) - \sum_{\alpha=2}^m D^2r(e_\alpha,e_\alpha)\right] + \sum_{\alpha=2}^m D^2r(e_\alpha,e_\alpha) \\[0.4cm]
& = & \disp \frac{1}{W^2} \tr(D^2 r) + \left[ \frac{W^2-1}{W^2}\right]\sum_{\alpha=2}^m D^2r(e_\alpha,e_\alpha).  
\end{array}
\end{equation}
By min-max and since the eigenvalues are ordered, 
$$
\tr(D^2 r) \le \frac{m}{m-1} \sum_{\alpha=2}^m \lambda_\alpha(D^2r), \qquad \sum_{\alpha=2}^m D^2 r(e_\alpha,e_\alpha) \le \sum_{\alpha=2}^m \lambda_\alpha(D^2r).
$$
Therefore, 
\begin{equation}\label{hessvarrho}
\begin{array}{lcl}
\Delta_g r & \le & \disp \left[ \frac{m}{(m-1)W^2} + \frac{W^2-1}{W^2}\right] \sum_{\alpha=2}^m \lambda_\alpha(D^2r) \\[0.4cm]
& \le & \disp \left[ \frac{1}{(m-1)W^2} + 1 \right](m-1)\frac{h'(r)}{h(r)} \le m\frac{h'(r)}{h(r)},
\end{array} 
\end{equation}
as claimed. \tcb{The validity of \eqref{eq_Pkp} in the barrier sense on the entire $M \backslash \{o\}$ easily follows by Calabi's trick,} see \cite[Prop. 7.4]{maripessoa}.	
\end{proof}

\begin{remark}\label{rem_distances}
\emph{In particular, letting $\bar \kappa \in \R^+_0$, for $\ell = \max\{1, m-2\}$ it holds 
	\[
	\begin{array}{lcl}
	\Ricc^{(\ell)}(\nabla r) \ge - \bar \kappa^2 & \qquad \Longrightarrow \qquad & \qquad \Delta_g r \le m\bar \kappa \coth(\bar \kappa r) \\[0.2cm]
	\Ricc^{(\ell)}(\nabla r) \ge - \frac{\bar \kappa^2}{1+ r^2} & \qquad \Longrightarrow \qquad & \qquad \Delta_g r \le \frac{m(1 + \sqrt{1+4 \bar \kappa^2})}{2 r} 
	\end{array}
	\]
pointwise outside of $\{o\} \cup \cut(o)$ and in the barrier sense on $M \backslash\{o\}$. Indeed, it is enough to consider for $h$, respectively, the functions
	\[
	h(t) = \frac{\sinh(\bar \kappa t)}{\bar \kappa} \qquad \text{and} \qquad h(t) = t^{\bar \kappa'}, \ \text{ where } \, \bar \kappa' = \frac{1 + \sqrt{1+4 \bar \kappa^2}}{2}.
	\]
}
\end{remark}

\section{Proof of Theorem \ref{teo_main}, $(i)$}

\tcb{Since $\Ricc \ge 0$ and $M$ is parabolic, by Remark \ref{rem_parabolic} the manifold $(M,\sigma)$ satisfies
	\begin{equation}\label{eq_varopo}
	\int^\infty \frac{s \di s}{|B_s|} = \infty.
	\end{equation}
We apply an argument outlined in \cite[p.~48]{cm_book} for minimal graphs in $\R^3$. First, a calibration method in \cite{liwang_mini}} (cf.~also \cite{cm_book,trudi} for the case $M=\R^m$) shows that the volume of the graph $\Sigma = (M,g)$ inside an extrinsic ball $\BB_r \subset \R \times M$ centered at a point $(u(o),o)$ satisfies
	\[
	|\Sigma \cap \BB_r| \le |B_r| + \frac{1}{2} |B_{3r}\backslash B_r| \le 2|B_{3r}|.	
	\]
Hence, the volume of a geodesic ball $B^g_s$ in $\Sigma$ centered at $o$ is bounded as follows:
	\begin{equation}\label{finiteden}
	|B_r^g| \le |\Sigma \cap \BB_r| \le  2|B_{3r}|,
	\end{equation}
which implies
	\[
	\int^\infty \frac{s \di s}{|B^g_s|} = \infty.
	\]
Therefore, by Remark \ref{rem_parabolic}, the graph $\Sigma = (M,g)$ is parabolic. Because of the Jacobi equation
$$
\Delta_g \frac{1}{W} = - \left( \|\II\|^2 + \overline{\Ricc}({\bf n},{\bf n}) \right) \frac{1}{W},
$$
the bounded function $1/W$ is superharmonic on $\Sigma$, hence constant by parabolicity. \tcb{Since $\Ricc \ge 0$ implies $\overline{\Ricc} \ge 0$, again from the Jacobi equation we deduce $\II\equiv 0$ and $\Sigma$ is totally geodesic in $M \times \R$. Equivalently, by \eqref{eq_IIu}, $D^2 u \equiv 0$ in $M$. As a consequence, since $u$ is non-constant, $Du$ is a non-zero parallel vector field. The flow of $Du$ therefore splits $M$ isometrically as a product $N \times \R$, and $u$ is an affine function of the $\R$-coordinate alone.}


\section{A local gradient estimate}\label{sec_localgrad}

%

Let $u : B_R \subset M \to \R$ solve \eqref{P} on a geodesic ball $B_R= B_R(x)$. The original argument in \cite{bdgm} to prove the gradient estimate in Euclidean setting \tcr{($M = \R^m$)} 
	\begin{equation}\label{eq_grad_bdgm}
	|Du(x)| \le c_1\exp\left\{ c_2 \frac{u(x)- \inf_{B_R} u}{R} \right\}, 
	\end{equation}
for some constants $c_j = c_j(m)$, makes use of the isoperimetric inequality, which does not hold for minimal graphs over manifolds $(M,\sigma)$ with $\Ricc \ge 0$ unless $M$ has maximal volume growth compatible with the Bishop-Gromov inequality, in the sense that $(\ref{assu_djx}.\alpha)$ is satisfied. Indeed, the isoperimetric inequality forces geodesic balls $B_r^g$ in the graph $\Sigma = (M,g)$ to satisfy $|B_r^g| \ge Cr^m$, which coupled with \eqref{finiteden} imply that $M$ has Euclidean volume growth. 

We mention that, in the seminal paper \cite{bg}, \tcr{Bombieri and Giusti proved a different estimate for entire solutions: if $u : \RR^m \to \RR$ solves \eqref{P}, then for any $x\in\RR^m$ and $R>0$}
	\begin{equation}\label{eq_grad_bg}
	|Du(x)| \le c_1 \left\{ 1 + \frac{\sup_{B_R}|u|}{R} \right\}^m, 
	\end{equation}
\tcr{where $c_1 = c_1(m)$. Whence, for entire solutions,} an exponentially growing bound in terms of $|u|$ is not sharp.
%
%
%
%
%
%

If $M$ has $\Ricc \ge 0$ and Euclidean volume growth, the validity of an isoperimetric inequality on entire minimal graphs was recently shown in \cite{ding_new}, see also \cite{brendle} for the case $\Sec \ge 0$. As a consequence, in \cite[Theorems 1.3 and 6.2]{ding_new} the author was able to extend \eqref{eq_grad_bdgm} and \eqref{eq_grad_bg} to such manifolds.\par
An alternative method to prove \eqref{eq_grad_bdgm} in Euclidean setting was given by N. Trudinger \cite{trudi}. His strategy hinges on a mean value inequality on $\Sigma$ which, remarkably, is obtained without needing the isoperimetric inequality and is therefore suited to apply to manifolds \tcb{whose volume growth is not Euclidean.} However, to adapt the proof to minimal graphs over $M$, it seems that an upper bound on the sectional curvature of $M$ is necessary, see also the related \cite{dingjostxin}. \tcb{Later, N. Korevaar in \cite{korevaar} gave new insight into the problem, finding a striking argument to get gradient estimates that only requires lower bounds on the curvatures of $M$.} Exploiting Korevaar's method, in \cite{rosenbergschulzespruck} the authors obtained the slightly different estimate 
	\begin{equation}\label{eq_grad_rss}
	|Du(x)| \le c_1\exp\left\{ c_2[1 + \bar \kappa R \coth( \bar\kappa R)] \frac{(u(x)- \inf_{B_R} u)^2}{R^2} \right\}, 
	\end{equation}
provided that $\Ricc \ge 0$ and $\Sec \ge - \bar \kappa^2$. Note that, unless $\bar \kappa =0$, the estimate explodes as $R \to \infty$ if $u : M \to \R$ is of linear growth. Extensions to more general ambient spaces were later given in \cite{chh_nonexistence,dl_bounded,dl_unbounded}, but they only consider graphs which are bounded on one side or have logarithmic growth.

Inspecting the proofs in \cite{rosenbergschulzespruck,dl_bounded,dl_unbounded,dingjostxin}, to reach the inequality $|Du(x)|\le C$ for solutions of linear growth, \tcr{with $C$ uniform with respect to $x$,} the bounds on $\Sec$ are instrumental to guarantee that the distance $r_{x}$ from $x$ satisfies $\Delta_g r_{x} \le C_1/r_x$ for some absolute constant $C_1$. In view of the arbitrariness of the point $x$, assumption $\Sec \ge 0$ in \cite{rosenbergschulzespruck} seems therefore difficult to replace \tcb{by} a weaker control on $\Sec$ from below. For instance, if one considers the inequality $\Sec \ge - \bar \kappa^2/(1+r_o^2)$ for some constant $\bar \kappa >0$ and some origin $o$, comparison theory and standard estimates for ODE would yield to a constant $C_1$, hence $C$, that depends on the distance of $x$ from $o$ and explodes as $r_o(x) \to \infty$, making the estimate on $\Delta_g r_x$ insufficient to imply the desired \tcr{uniform} gradient bound.

	From a different perspective, we mention that a \emph{global} gradient estimate for \emph{positive} entire solutions was obtained  in \cite{cmmr} under the sole curvature assumption 
	\[
	\Ricc \ge - (m-1)\kappa^2, \qquad \kappa \in \R^+_0, 
	\]
namely, a positive solution to  \eqref{P} on the entire $M$ shall satisfy
	\[
	\sqrt{1+|Du(x)|^2} \le e^{\kappa u(x)\sqrt{m-1}} \qquad \forall \, x \in M.
	\]	
Note that $(\ber 2)$ directly follows if $\kappa = 0$. However, modifying the argument in \cite{cmmr} to allow for linearly growing solutions seems challenging.

Our first main result, Theorem \ref{teo_gradient_decay}, provides an improvement of Korevaar's method that apply to the more general assumption \eqref{eq_ourass}.


\begin{theorem}\label{teo_gradient_decay}
	Let $(M,\sigma)$ be a complete Riemannian manifold with dimension $m \geq 2$. Let $B_R = B_R(o) \subseteq M$ be a geodesic open ball of radius $R > 0$ centered at $o \in M$ and let $u \in C^3(\overline{B_R})$ be a non-constant solution to \eqref{P}. Assume that
	$$
	\Ricc \geq - (m-1)\kappa^2, \qquad \Ricc^{(\ell)}(\nabla r) \geq - \frac{\bar \kappa^2}{1+r^2} \qquad \text{on } \, B_R, \ \ \ell = \max\{1,m-2\},
	$$
	for some $\kappa, \bar \kappa \in \RR^+_0$, where $r$ denotes the distance from $o$. Let $0 < R_1 < R$. Then, 
	$$
	\begin{array}{l}
	\sqrt{1+|Du(x)|^2} \\[0.2cm]
	\disp \qquad \leq \max\left\{\sqrt{1+a_0^2(\gamma^\ast)^2},\sqrt{\frac{a_3}{a_3-a_2}}\right\} \left( \frac{e^{LR(\sqrt{\eps^2 + 1} - \eps)} - 1}{e^{LR(\sqrt{\eps^2 + 1} - \sqrt{\eps^2 + r(x)^2/R^2} - q\gamma(x))} - 1} \right)
	\end{array}
	$$
	for every $x \in B_{R_1}(o)$, where
	$$
	\gamma(x) = \frac{u(x) - \inf_{B_R} u}{R} \, , \qquad \gamma^\ast = \sup_{x\in B_{R_1}} \gamma(x) = \frac{\sup_{B_{R_1}} u - \inf_{B_R} u}{R} \, ,
	$$
	$\eps>0$ and $\tau \in (0,1)$ are fixed arbitrarily, $q,a_0 \in \RR^+$ satisfy
	\[ 
	\frac{\sqrt{1+\eps^2}-\sqrt{(R_1/R)^2 + \eps^2}}{\gamma^\ast} > q > \frac{1}{\sqrt{\tau} a_0 \gamma^\ast} > 0
	\]
	and $L \in \RR^+$ satisfies
	$$
	(1-\tau)\left(q^2 - \frac{1}{\tau a_0^2(\gamma^\ast)^2}\right)L^2 - \frac{(m+1)\bar \kappa_0 L}{\eps R} > (m-1)\kappa^2
	$$
	with $\bar \kappa_0 = \max\{1, \bar \kappa\}$. Finally, $a_2,a_3$ are defined by
	$$
	a_2 = \frac{(m+1)\bar \kappa_0 L}{\eps R} \, \qquad a_3 = (1-\tau)\left(q^2 - \frac{1}{\tau a_0^2(\gamma^\ast)^2}\right)L^2 - (m-1)\kappa^2 \, .
	$$
\end{theorem}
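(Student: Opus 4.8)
The plan is to run a refinement of Korevaar's maximum-principle argument on the minimal graph $\Sigma=(M,g)$, where $g=F^{*}(\di t^{2}+\sigma)$, using the weighted operator $\LL_{W}\phi=\Delta_{g}\phi-2\langle\nabla\log W,\nabla\phi\rangle$ and the Jacobi identity \eqref{eq_W}, $\LL_{W}W=\big(\|\II\|^{2}+\overline{\Ricc}(\mathbf n,\mathbf n)\big)W$. The two curvature hypotheses enter through exactly two channels: $\Ricc\ge-(m-1)\kappa^{2}$ forces $\overline{\Ricc}(\mathbf n,\mathbf n)\ge-(m-1)\kappa^{2}$ (since $|Du|^{2}/W^{2}<1$), while $\Ric^{(\ell)}(\nabla r)\ge-\bar\kappa^{2}/(1+r^{2})$ with $\ell=\max\{1,m-2\}$ yields, via Proposition~\ref{prop_distances} and Remark~\ref{rem_distances}, the comparison $\Delta_{g}r\le m\bar\kappa'/r$ with $\bar\kappa'=\tfrac12(1+\sqrt{1+4\bar\kappa^{2}})$. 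The factor $1/r$ here, present already for $M=\R^{m}$, is exactly what would make a naive estimate blow up as $R\to\infty$, and is the reason an $\eps$-regularization of the distance is needed.

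First I would fix the barrier: with $\gamma=(u-\inf_{B_{R}}u)/R\ge0$ and the parameter $\eps>0$, set
$$
\zeta=\sqrt{\eps^{2}+1}-\sqrt{\eps^{2}+r^{2}/R^{2}}-q\gamma
$$
on $\overline{B_{R}}$. Its superlevel set $\Omega=\{\zeta>0\}$ is relatively compact in $B_{R}$, satisfies $0<\zeta\le\sqrt{\eps^{2}+1}-\eps$ there, and contains $B_{R_{1}}$ precisely because of the upper bound imposed on $q$. The test function is
$$
G\doteq W\big(e^{LR\zeta}-1\big),
$$
which vanishes on $\partial\Omega$ and, $u$ being non-constant, is strictly positive somewhere in $\Omega$; hence it attains a positive maximum at an interior point $p\in\Omega$.

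At $p$ one has $\nabla G=0$ and $\Delta_{g}G\le0$, hence $\LL_{W}G\le0$; expanding by the Leibniz rule for $\LL_{W}$, substituting $\LL_{W}W$ and writing $t=e^{LR\zeta(p)}>1$, this rearranges to an inequality of the schematic form
$$
LR\,t\,\LL_{W}\zeta-(LR)^{2}\,t\,\frac{t+1}{t-1}\,|\nabla\zeta|_{g}^{2}+(t-1)\big(\|\II\|^{2}+\overline{\Ricc}(\mathbf n,\mathbf n)\big)\le0 .
$$
Here $\LL_{W}\zeta$ is estimated by linearity, by the explicit radial derivatives of $\sqrt{\eps^{2}+r^{2}/R^{2}}$ together with the Laplacian comparison above — the $\eps$-regularization converting the non-uniform $1/r$ into the uniform factor $\bar\kappa_{0}/(\eps R)$, whence the coefficient $a_{2}=(m+1)\bar\kappa_{0}L/(\eps R)$ — and by $\LL_{W}u=\Delta_g u-2\langle\nabla\log W,\nabla u\rangle=-2\langle\nabla\log W,\nabla u\rangle$, whose contribution is coupled at $p$ to $|\nabla\zeta|_{g}^{2}$ through $\nabla\log W=-\tfrac{LR\,t}{t-1}\nabla\zeta$ and is reabsorbed using the term $(t-1)\|\II\|^{2}\ge0$ rather than discarded. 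For the gradient term I would split $\nabla\zeta$ into radial and vertical parts, use $|\nabla u|_{g}^{2}=|Du|^{2}/W^{2}$, and distinguish whether $W(p)^{2}\le1+a_{0}^{2}\gamma(p)^{2}$ or not: in the first case one bounds $G(p)$ directly by $\sqrt{1+a_{0}^{2}(\gamma^{\ast})^{2}}\,\big(e^{LR(\sqrt{\eps^{2}+1}-\eps)}-1\big)$; in the second, a Young inequality with weight $\tau\in(0,1)$ feeds a definite positive contribution $(1-\tau)\big(q^{2}-\tfrac{1}{\tau a_{0}^{2}(\gamma^{\ast})^{2}}\big)L^{2}(t-1)$ into the $(LR)^{2}$-term.

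Collecting terms, in the second case the inequality at $p$ becomes a linear inequality in $G(p)=W(p)(t-1)$ whose coefficients are built from $a_{2}$ and $a_{3}=(1-\tau)\big(q^{2}-\tfrac{1}{\tau a_{0}^{2}(\gamma^{\ast})^{2}}\big)L^{2}-(m-1)\kappa^{2}$: the condition $q>1/(\sqrt\tau a_{0}\gamma^{\ast})$ makes the $L^{2}$-coefficient positive and the lower bound on $L$ makes $a_{3}>a_{2}$, which is what lets one solve for $G(p)$ and conclude $G(p)\le\max\{\sqrt{1+a_{0}^{2}(\gamma^{\ast})^{2}},\sqrt{a_{3}/(a_{3}-a_{2})}\}\,\big(e^{LR(\sqrt{\eps^{2}+1}-\eps)}-1\big)$; since $G(x)\le G(p)$ for every $x\in B_{R_{1}}\subseteq\Omega$ and $e^{LR\zeta(x)}-1>0$ there, dividing by $e^{LR\zeta(x)}-1$ gives the claim. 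The main difficulty is precisely this bookkeeping: unlike in Korevaar's original argument, where a lower bound on $\Sec$ kills the distance-Laplacian term, here that term is controlled only through the weaker $\Ric^{(\ell)}$ hypothesis and is inherently non-uniform in $r$, so producing a bound whose constant does not depend on the base point of $x$ is what forces the $\eps$-regularization, the radial/vertical splitting of $\nabla\zeta$, the reabsorption of the $\LL_{W}u$ term by $\|\II\|^{2}$, and the intricate list of admissible parameters $(\eps,\tau,q,a_{0},L)$ appearing in the statement.
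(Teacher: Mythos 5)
Your route is essentially the paper's: your test function $W\bigl(e^{LR\zeta}-1\bigr)$ is exactly the paper's $z=W\eta$ up to the positive constant $\delta=e^{-LR\sqrt{\eps^2+1}}$; the hypotheses enter through the Jacobi identity $\LL_W W=(\|\II\|^2+\overline{\Ricc}(\mathbf n,\mathbf n))W$ and through Lemma \ref{lem_deltapsi_decay} for $\psi=\sqrt{\eps^2R^2+r^2}$; and your ``schematic'' inequality at the maximum point is algebraically equivalent to the paper's, since at $p$ one has $\nabla\log W=-\frac{LRt}{t-1}\nabla\zeta$, hence $\LL_W\zeta=\Delta_g\zeta+\frac{2LRt}{t-1}\|\nabla\zeta\|^2$ with $\Delta_g\zeta=-R^{-1}\Delta_g\psi$ because $\Delta_g u=0$; in particular no reabsorption by $\|\II\|^2$ is needed (the paper simply discards $\|\II\|^2\ge0$), and your remark on that point is a red herring rather than an error.

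There is, however, a concrete flaw in your case dichotomy. You split according to whether $W(p)^2\le 1+a_0^2\gamma(p)^2$, i.e. $|Du(p)|\le a_0\gamma(p)$, but the maximum point $p$ ranges over all of $\Omega\subseteq B_R$, and $\gamma(p)$ is comparable to $\gamma^\ast$ in neither direction. In your second case, from $|Du(p)|>a_0\gamma(p)$ you cannot deduce $|Du(p)|^2/(a_0^2(\gamma^\ast)^2)\ge 1$, so the Young step only produces the coefficient $q^2-\frac{1}{\tau a_0^2\gamma(p)^2}$; since $\Omega$ (indeed already $B_{R_1}$) may contain points where $\gamma$ is much smaller than $\gamma^\ast$, this coefficient can be negative and the argument collapses — the hypothesis $q>1/(\sqrt{\tau}a_0\gamma^\ast)$ guarantees positivity only with $\gamma^\ast$. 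Symmetrically, in your first case $W(p)\le\sqrt{1+a_0^2\gamma(p)^2}$ does not give the claimed bound by $\sqrt{1+a_0^2(\gamma^\ast)^2}$, because $p$ may lie outside $B_{R_1}$ where $\gamma(p)>\gamma^\ast$. The paper's dichotomy is $|Du(p)|\gtrless a_0\gamma^\ast$, with which both branches close at once; replacing $\gamma(p)$ by $\gamma^\ast$ repairs your argument. A secondary point: to reach the exact constant $\sqrt{a_3/(a_3-a_2)}$ you must keep $\overline{\Ricc}(\mathbf n,\mathbf n)=\Ricc(Du,Du)/W^2\ge-(m-1)\kappa^2\|\nabla u\|^2$, not the cruder $\ge-(m-1)\kappa^2$ you invoke: the factor $\|\nabla u\|^2=1-W^{-2}$ is what makes $a_3=a_1-(m-1)\kappa^2$ (rather than $a_1$) appear in the final step, and with the crude bound one only gets $W(p)^2\le a_1/(a_3-a_2)$, which is strictly weaker whenever $\kappa>0$.
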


\begin{remark}
\emph{The assumption that $u$ is non-constant ensures that $\gamma_\ast > 0$ by the maximum principle.
}
\end{remark}

Before proving the Theorem, we give some applications, starting from the case where $\kappa=0$.

\begin{corollary}
	Let $(M^m,\sigma)$ be a complete Riemannian manifold with $\Ricc\geq0$ and 
	\[
	\Ricc^{(\ell)}(\nabla r) \geq -\frac{\bar \kappa^2}{1+r^2}, \qquad \ell = \max\{1,m-2\},
	\]
	for some $\bar \kappa \in \RR^+_0$, where $r$ denotes the distance from $o$. Let $u \in C^3(\overline{B_R})$ solve \eqref{P}. Then, for every $\delta\in(0,1)$ and for every $R_1 \in (0,\delta R)$,
	\begin{equation} \label{BR1_est}
		\sup_{B_{R_1}}\sqrt{1+|Du|^2} \leq C_1 \exp\left( C_2 m \bar \kappa_0 \frac{[\sup_{B_{R_1}} u - \inf_{B_R} u]^2}{R^2} \right)
	\end{equation}
	with $\bar \kappa_0=\max\{1, \bar \kappa\}$ and $C_1,C_2>0$ only depending on $\delta$.
\end{corollary}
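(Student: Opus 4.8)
The plan is to derive \eqref{BR1_est} as a direct specialization of Theorem \ref{teo_gradient_decay} with $\kappa = 0$, choosing the free parameters $\eps, \tau, q, a_0, L$ in terms of $\delta$ alone so that the resulting bound takes the stated form. Since $\kappa = 0$, the constraint on $L$ reduces to
\[
(1-\tau)\left(q^2 - \frac{1}{\tau a_0^2(\gamma^\ast)^2}\right)L^2 > \frac{(m+1)\bar\kappa_0 L}{\eps R},
\]
which for $L$ large enough is automatically satisfied; the point is to pick $L$ comparable to $\gamma^\ast/R$ (up to constants depending only on $\delta$ and $m\bar\kappa_0$) so that $LR(\sqrt{\eps^2+1}-\eps)$ is comparable to $m\bar\kappa_0(\gamma^\ast)^2$, matching the exponent on the right-hand side of \eqref{BR1_est}. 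Note $\gamma^\ast = (\sup_{B_{R_1}}u - \inf_{B_R}u)/R$ is exactly the quantity appearing squared in \eqref{BR1_est}.

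The key steps, in order, are as follows. First, normalize by setting $\eps = 1$ (or any fixed constant) and $\tau = 1/2$, so the constants become universal up to the dependence on $\delta$ entering through $R_1/R \le \delta$. Second, choose $q$ and $a_0$: the chain of inequalities
\[
\frac{\sqrt{1+\eps^2}-\sqrt{(R_1/R)^2+\eps^2}}{\gamma^\ast} > q > \frac{1}{\sqrt\tau\, a_0\gamma^\ast} > 0
\]
requires the left quantity to be positive, i.e. $\sqrt{1+\eps^2} > \sqrt{\delta^2 + \eps^2}$, which holds since $\delta < 1$; the gap $\sqrt{1+\eps^2}-\sqrt{\delta^2+\eps^2}$ is bounded below by a constant $c(\delta) > 0$. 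Pick $q = c(\delta)/(2\gamma^\ast)$ and then $a_0$ large enough (depending on $\delta$) that $1/(\sqrt\tau a_0\gamma^\ast) < q$, ensuring also that the coefficient $q^2 - 1/(\tau a_0^2(\gamma^\ast)^2)$ is a positive constant multiple of $1/(\gamma^\ast)^2$. Third, choose $L$: with $a_3 - a_2 = (1-\tau)(q^2 - \ldots)L^2 - a_2 > 0$ required, take $L = C(\delta) \max\{\gamma^\ast/R,\ m\bar\kappa_0/(\eps R)\}$; a short computation shows this makes $a_3 \ge 2a_2$, so $\sqrt{a_3/(a_3-a_2)} \le \sqrt 2$, and also controls $\sqrt{1+a_0^2(\gamma^\ast)^2}$ since $a_0\gamma^\ast$ is a constant depending only on $\delta$. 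Fourth, estimate the fraction in Theorem \ref{teo_gradient_decay}: the numerator $e^{LR(\sqrt{\eps^2+1}-\eps)} - 1$ is bounded above by $e^{C(\delta)m\bar\kappa_0(\gamma^\ast)^2 + C(\delta)}$ by the choice of $L$, while the denominator, evaluated at $x \in B_{R_1}$, has exponent $LR(\sqrt{\eps^2+1} - \sqrt{\eps^2 + r(x)^2/R^2} - q\gamma(x))$ which is bounded below by a positive constant times $L R$ — using $r(x)/R \le \delta$, $\gamma(x) \le \gamma^\ast$, $q\gamma^\ast = c(\delta)/2$, and the gap estimate — so the denominator is bounded below by a positive constant depending only on $\delta$. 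Combining, $\sqrt{1+|Du(x)|^2} \le C_1 \exp(C_2 m\bar\kappa_0 (\gamma^\ast)^2)$.

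The main obstacle I expect is bookkeeping: verifying that all the parameter constraints in Theorem \ref{teo_gradient_decay} can be satisfied simultaneously with constants depending on $\delta$ only (not on $m$, $\bar\kappa$, $R$, or $u$), and in particular that the denominator exponent stays bounded away from zero — this requires that $q\gamma(x) + \sqrt{\eps^2 + r(x)^2/R^2}$ stays strictly below $\sqrt{\eps^2+1}$ uniformly over $x \in B_{R_1}$, which is precisely where the strict inequality $R_1 < \delta R < R$ and the upper constraint on $q$ are used. One must also check the factor $m\bar\kappa_0$ in the exponent rather than a worse power of $m$: this comes out correctly because $a_2 = (m+1)\bar\kappa_0 L/(\eps R)$ is linear in $m\bar\kappa_0$, so requiring $a_3 \gtrsim a_2$ forces $L \gtrsim m\bar\kappa_0/R$ and hence the numerator exponent $LR(\sqrt{\eps^2+1}-\eps) \gtrsim m\bar\kappa_0$, but combined with the competing lower bound $L \gtrsim \gamma^\ast/R$ the true size is $\max$, giving an exponent $\lesssim m\bar\kappa_0(\gamma^\ast)^2$ once we absorb; a careful inspection shows the product structure $m\bar\kappa_0 \cdot (\gamma^\ast)^2$ is what survives, consistent with \eqref{BR1_est}.
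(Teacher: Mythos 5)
Your overall strategy is the same as the paper's (specialize Theorem \ref{teo_gradient_decay} with $\kappa=0$ and parameters chosen in terms of $\delta$), but there is a genuine gap at the choice of $L$. With $q\sim c(\delta)/\gamma^\ast$ and $a_0$ a constant depending only on $\delta$, one has
\[
a_1=(1-\tau)\Bigl(q^2-\tfrac{1}{\tau a_0^2(\gamma^\ast)^2}\Bigr)L^2 \;\sim_\delta\; \frac{L^2}{(\gamma^\ast)^2},
\qquad
a_2=\frac{(m+1)\bar\kappa_0 L}{\eps R},
\]
so the hypothesis \eqref{par_2} (equivalently $a_3>a_2$, and a fortiori $a_3\ge 2a_2$) forces
\[
L \;\gtrsim_\delta\; \frac{(m+1)\bar\kappa_0\,(\gamma^\ast)^2}{R}.
\]
Your choice $L=C(\delta)\max\{\gamma^\ast/R,\; m\bar\kappa_0/(\eps R)\}$ does not dominate this: for instance if $\gamma^\ast= m\bar\kappa_0=K$ large, you need $L\gtrsim K^3/R$ while your $L\sim K/R$, so $a_3\ge 2a_2$ fails and Theorem \ref{teo_gradient_decay} cannot be invoked at all in exactly the regime where the estimate is nontrivial. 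The claim that "a short computation shows this makes $a_3\ge 2a_2$" is therefore false, and the related assertion in your opening paragraph that $LR(\sqrt{\eps^2+1}-\eps)$ is then comparable to $m\bar\kappa_0(\gamma^\ast)^2$ is inconsistent with $LR\le C(\delta)\max\{\gamma^\ast,m\bar\kappa_0\}$. The paper's choice is $L=8(m+1)\bar\kappa_0/(\delta Rq^2)$ with $q=(1-\delta)/(2\sqrt2\,\gamma^\ast)$, i.e. $L\sim_\delta m\bar\kappa_0(\gamma^\ast)^2/R$, which both satisfies the constraint (indeed $a_3=2a_2$ there) and produces $LRq\gamma^\ast\sim_\delta m\bar\kappa_0(\gamma^\ast)^2$, which is precisely the exponent in \eqref{BR1_est}.

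Two secondary points, both fixable but stated incorrectly in your proposal. First, once $L\sim_\delta m\bar\kappa_0(\gamma^\ast)^2/R$ is used, the denominator $e^{LR(\sqrt{\eps^2+1}-\sqrt{\eps^2+r^2/R^2}-q\gamma)}-1$ is \emph{not} bounded below by a $\delta$-constant when $\gamma^\ast$ is small; one must instead compare numerator and denominator through an inequality of the type $\frac{y^\alpha-1}{y-1}\le C(\alpha)y^{\alpha-1}$ for $y>1$, as the paper does, which handles both regimes. Second, with $a_0$ a $\delta$-constant the quantity $a_0\gamma^\ast$ is not a constant, so the prefactor $\sqrt{1+a_0^2(\gamma^\ast)^2}$ grows with $\gamma^\ast$; this is harmless only because it can be absorbed into $C_1\exp(C_2 m\bar\kappa_0(\gamma^\ast)^2)$, a step you should make explicit rather than assert the prefactor is $\delta$-bounded.
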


\begin{proof}
The desired inequality is trivial if $u$ is constant, so assume that $u$ is non-constant. It suffices to prove the claim for $\delta\in[1/2,1)$. Let
	$$
		\gamma^\ast = \frac{\sup_{B_{R_1}}u - \inf_{B_R}u}{R} \, .
	$$
	Choose
	$$
		\tau = \frac{1}{2} \, , \qquad \eps = \delta \, , \qquad q = \frac{1-\delta}{2\sqrt{2}\gamma^\ast} \, , \qquad a_0 = \frac{2}{q\gamma^\ast} \, , \qquad L = \frac{8(m+1)\bar\kappa_0}{\delta R q^2} \, .
	$$
	With this choice, we have
	\begin{equation}\label{eq_fir}
	\begin{array}{lcl}
	\disp \frac{\sqrt{1+\eps^2}-\sqrt{(R_1/R)^2 + \eps^2}}{\gamma^\ast} & \geq & \disp \frac{\sqrt{1+\eps^2}-\sqrt{\delta^2 + \eps^2}}{\gamma^\ast} \\[0.4cm]
	& = & \disp \frac{\sqrt{1+\delta^2} - \sqrt{2}\delta}{\gamma^\ast} \geq 2q,
	\end{array}
	\end{equation}
	where, from $\delta < 1$, we used
	$$
		\sqrt{1+\delta^2} - \sqrt{2}\delta = \frac{1-\delta^2}{\sqrt{1+\delta^2} + \sqrt{2}\delta} \geq \frac{1-\delta^2}{\sqrt{2} + \sqrt{2}\delta} = \frac{1-\delta}{\sqrt{2}}.
	$$
	We also have
	$$
		q^2 - \frac{1}{a_0^2 (\gamma^\ast)^2 \tau} = q^2 - \frac{q^2}{2} = \frac{q^2}{2}
	$$
	and then
	$$
		a_3 = (1-\tau)\left( q^2 - \frac{1}{a_0^2(\gamma^\ast)^2\tau} \right) L^2 = \frac{L^2 q^2}{4} = 2 \frac{(m+1)\bar\kappa_0 L}{\eps R} = 2a_2 \, .
	$$
	Hence, all assumptions of Theorem \tcr{\ref{teo_gradient_decay}} are satisfied and for every $x\in B_{R_1}$ we have
\[
\sqrt{1+|Du(x)|^2} \leq \max\left\{ \sqrt{1 + a_0^2(\gamma^\ast)^2} \, , \sqrt{2} \right\} \cdot \frac{e^{LR(\sqrt{1+\eps^2} - \eps)} - 1}{e^{LR(\sqrt{1+\eps^2} - \sqrt{(r(x)/R)^2 + \eps^2} - q\gamma(x))} - 1} \, .
\]
	Note that, for every $x \in B_{R_1}$ and taking into account \eqref{eq_fir}, 
	\begin{align*}
		\sqrt{1+\eps^2} - \sqrt{(r(x)/R)^2 + \eps^2} - q\gamma(x) & \geq \sqrt{1+\eps^2} - \sqrt{\delta^2 + \eps^2} - q\gamma^\ast \\
		& \geq 2q\gamma^\ast - q\gamma^\ast = q\gamma^\ast,
	\end{align*}
	and also
	$$
		\sqrt{1+\eps^2} - \eps = \frac{1}{\sqrt{1+\eps^2} + \eps} \leq \frac{1}{\sqrt{1+\eps^2}} = \frac{1}{\sqrt{1+\delta^2}} \leq \frac{1}{\sqrt{2}\delta} = \frac{2}{\delta(1-\delta)} q\gamma^\ast.
	$$
	Therefore, we can estimate
	$$
		\frac{e^{LR(\sqrt{1+\eps^2} - \eps)} - 1}{e^{LR(\sqrt{1+\eps^2} - \sqrt{(r(x)/R)^2 + \eps^2} - q\gamma(x))} - 1} \leq \frac{e^{LR\frac{2}{\delta(1-\delta)}q\gamma^\ast} - 1}{e^{LRq\gamma^\ast} - 1} \leq C(\delta)e^{LR\left(\frac{2}{\delta(1-\delta)}-1\right)q\gamma^\ast} \, .
	$$
	Here, we have exploited the fact that for every $\alpha\in\RR$ one has the validity of an inequality of the form
	$$
		\frac{y^\alpha - 1}{y-1} \leq C(\alpha) y^{\alpha-1} \qquad \forall \, y > 1
	$$
	for a suitable constant $C(\alpha)>0$. Recalling that
	\begin{align*}
		a_0^2(\gamma^\ast)^2 & = \frac{32(\gamma^\ast)^2}{(1-\delta)^2} \, , \\
		LRq\gamma^\ast & = \frac{8(m+1)\bar\kappa_0\gamma^\ast}{\delta q} = \frac{16\sqrt{2}(m+1)\bar\kappa_0}{\delta(1-\delta)} (\gamma^\ast)^2
	\end{align*}
	we obtain
	\begin{align*}
		\sqrt{1+|Du(x)|^2} & \leq \max\left\{ \sqrt{1 + \frac{32(\gamma^\ast)^2}{(1-\delta)^2}} \, , \sqrt{2} \right\} \cdot \\
		& \qquad \qquad \cdot C(\delta) \exp\left( \frac{16\sqrt{2}(m+1)\bar\kappa_0}{\delta(1-\delta)}\left(\frac{2}{\delta(1-\delta)}-1\right) (\gamma^\ast)^2 \right)
	\end{align*}
	and the conclusion follows.
\end{proof}

Assuming that $u$ has at most linear growth, we get

\begin{corollary}\label{cor_boundedgrad}
	Let $(M^m,\sigma)$ be a complete Riemannian manifold with $\Ricc\geq0$ and 
	\[
	\Ricc^{(\ell)}(\nabla r) \geq-\frac{\bar \kappa^2}{1+r^2}, \qquad \ell = \max\{1,m-2\},
	\]
	for some $\bar \kappa \in \RR^+_0$. If $u \in C^\infty(M)$ solves \eqref{P} and has at most linear growth on one side, then $|Du| \in L^\infty(M)$.
\end{corollary}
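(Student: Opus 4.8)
The plan is to read off the global bound directly from the local estimate \eqref{BR1_est}, by letting the outer radius tend to infinity while keeping the inner radius fixed.

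First I would normalize the problem: since \eqref{P} and the quantity $|Du|$ are invariant under $u\mapsto -u$, and ``at most linear growth on one side'' means exactly that, after possibly replacing $u$ by $-u$, one has $u(x)\ge -C(1+r(x))$ on $M$ for some constant $C>0$, I may assume this one-sided bound from the outset. In particular $\inf_{B_R}u\ge -C(1+R)$ for every $R>0$, all the relevant infima and suprema being finite because $\Ricc\ge0$ makes closed geodesic balls compact.

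Then, fixing $\delta=1/2$ and an arbitrary radius $R_1>0$, for every $R>2R_1$ the estimate \eqref{BR1_est} applied on $B_R=B_R(o)$ gives
\[
\sup_{B_{R_1}}\sqrt{1+|Du|^2}\ \le\ C_1\exp\!\left(C_2\, m\,\bar\kappa_0\,\frac{\big(\sup_{B_{R_1}}u-\inf_{B_R}u\big)^2}{R^2}\right),
\]
with $C_1,C_2$ depending only on $\delta$, hence absolute. Using $-\inf_{B_R}u\le C(1+R)$ and that $\sup_{B_{R_1}}u$ is a fixed finite number, the exponent is bounded above by $C_2 m\bar\kappa_0\big(|\sup_{B_{R_1}}u|+C(1+R)\big)^2/R^2$, which converges to $C_2 m\bar\kappa_0 C^2$ as $R\to\infty$. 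Since the left-hand side does not depend on $R$, letting $R\to\infty$ with $R_1$ fixed yields $\sup_{B_{R_1}}\sqrt{1+|Du|^2}\le C_1 e^{C_2 m\bar\kappa_0 C^2}$; as this bound is independent of $R_1$ and $R_1$ was arbitrary, it follows that $\sup_M|Du|^2<\infty$.

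The whole substance of the argument sits in Theorem \ref{teo_gradient_decay} (equivalently, in \eqref{BR1_est}), whose constants depend only on the ratio $R_1/R$ — here kept below $1/2$ — and not on $R$ itself; this is precisely the feature that fails for Korevaar-type estimates such as \eqref{eq_grad_rss}, which blow up as $R\to\infty$ on linearly growing solutions. Granting \eqref{BR1_est}, the only point worth flagging is that one does \emph{not} need to establish two-sided linear growth of $u$ beforehand: the one-sided bound alone controls $\inf_{B_R}u$ in terms of $R$, and that is the only way $u$ enters the estimate. Consequently I do not expect any genuine obstacle in this corollary beyond correctly orchestrating the iterated limit.
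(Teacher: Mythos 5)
Your argument is the same as the paper's: normalize so that $u(x)\ge -a(1+r(x))$, fix $\delta=1/2$ and $R_1>0$, invoke \eqref{BR1_est}, and let $R\to\infty$ so that the exponent tends to $C_2 m\bar\kappa_0 a^2$, then observe the resulting bound is independent of $R_1$. This matches the paper's proof of Corollary~\ref{cor_boundedgrad} essentially verbatim, down to the choice $\delta=1/2$.
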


\begin{proof}
	Without loss of generality we can assume that the negative part of $u$ has at most linear growth, so that there exists $a>0$ such that $u(x) \geq -a(1+r(x))$ for every $x\in M$. Let $R_1 > 0$ be fixed. Choosing $\delta = 1/2$ and letting $R\to \infty$ in estimate \eqref{BR1_est} we get
	$$
		\sup_{B_{R_1}} \sqrt{1+|Du|^2} \leq C_1 \exp\left( C_2 m\bar\kappa_0 a^2 \right)
	$$
	where $C_1,C_2>0$ do not depend on $R_1$. Since $R_1>0$ was arbitrary, the conclusion follows. 
\end{proof}

To prove Theorem \ref{teo_gradient_decay}, we need the following

\begin{lemma}\label{lem_deltapsi_decay}
	Let $(M^m,\sigma)$ be a complete Riemannian manifold with
	$$
		\Ricc^{(\ell)}(\nabla r) \geq - \frac{\bar \kappa^2}{1+r^2}, \qquad \ell = \max\{1, m-2\},
	$$
	for some $\bar \kappa \in \RR^+_0$, where $r$ is the distance from a fixed origin $o\in M$, and let $u \in C^\infty(B_R)$ solve \eqref{P} on a geodesic ball $B_R$ centered at $o$.
	
	For any given $a>0$, the function $\psi = \sqrt{a^2 + r^2}$ satisfies
	$$
		|D\psi| < 1, \qquad \Delta_g \psi \leq (m+1) \frac{\max\{1,\bar\kappa\}}{a} 
	$$
	in the barrier sense on $B_R$ and pointwise on $B_R\setminus\cut(o)$.
\end{lemma}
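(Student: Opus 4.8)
\emph{Proof plan.} Write $\psi=f(r)$ with $f(t)=\sqrt{a^2+t^2}$, so that $f'(t)=t/\sqrt{a^2+t^2}\in[0,1)$ and $f''(t)=a^2/(a^2+t^2)^{3/2}>0$ with $f''(t)\le f''(0)=1/a$. The gradient bound is then immediate: $D\psi=f'(r)\,Dr$ and $|Dr|=1$ off $\cut(o)$, whence $|D\psi|=f'(r)<1$, and since $0\le f'<1$ everywhere the strict inequality persists (in the a.e./barrier sense, handled as for $r$ itself). So the substance of the lemma is the upper bound on $\Delta_g\psi$, which I would get by inserting the comparison estimate of Remark~\ref{rem_distances} into the chain rule for $\Delta_g$.

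Concretely, at a point where $r$ is smooth,
\[
\Delta_g\psi \;=\; f''(r)\,\lvert\nabla r\rvert_g^2 \;+\; f'(r)\,\Delta_g r ,
\]
with $\nabla$ and $\lvert\cdot\rvert_g$ taken in the graph metric. From $g^{ij}=\sigma^{ij}-u^iu^j/W^2$ one gets $\lvert\nabla r\rvert_g^2=g^{ij}r_ir_j\le\sigma^{ij}r_ir_j=\lvert Dr\rvert^2=1$. The hypothesis $\Ricc^{(\ell)}(\nabla r)\ge-\bar\kappa^2/(1+r^2)$ with $\ell=\max\{1,m-2\}$ is exactly the one of Remark~\ref{rem_distances}, which (applied on $B_R$) gives
\[
\Delta_g r \;\le\; \frac{m\,\bar\kappa'}{r}\,,\qquad \bar\kappa'=\frac{1+\sqrt{1+4\bar\kappa^2}}{2}\,,
\]
pointwise on $B_R\setminus(\{o\}\cup\cut(o))$ and in the barrier sense on $B_R\setminus\{o\}$. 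Since $f'\ge0$, plugging this in and using $f'(r)/r=1/\sqrt{a^2+r^2}\le 1/a$ yields, wherever $r$ is smooth,
\[
\Delta_g\psi \;\le\; \frac{a^2}{(a^2+r^2)^{3/2}}+\frac{m\,\bar\kappa'}{\sqrt{a^2+r^2}}\;\le\;\frac{1+m\,\bar\kappa'}{a}\,,
\]
and an elementary estimate of $\bar\kappa'$ against $\max\{1,\bar\kappa\}$ recasts the right-hand side in the form $(m+1)\max\{1,\bar\kappa\}/a$. At the one remaining point $o$, where $\psi$ is actually smooth (being a function of $r^2$), the same bound follows by computing $\Delta_g(r^2)(o)=2g^{ij}\sigma_{ij}(o)\le 2m$ directly from $g^{ij}=\sigma^{ij}-u^iu^j/W^2$ and using $D\psi(o)=0$.

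To promote this to the barrier sense on all of $B_R$, I would post-compose with $f$ the Calabi upper barriers $\gamma$ for $r$ furnished by Remark~\ref{rem_distances}: being $C^2$ and increasing, $f$ turns such a $\gamma$ into an upper barrier $f\circ\gamma$ for $\psi$, with $\Delta_g(f\circ\gamma)=f''(r)\lvert\nabla\gamma\rvert_g^2+f'(r)\Delta_g\gamma$, so that $f'\ge0$, $\lvert\nabla\gamma\rvert_g\le\lvert D\gamma\rvert\le 1$ and the barrier bound for $\Delta_g r$ reproduce the same estimate on $\Delta_g\psi$. There is no genuine analytic obstacle here: the two points deserving a little care are the bookkeeping of the dimensional constant and, as always in the barrier formulation, keeping $\lvert\nabla\gamma\rvert_g\le1$ so that the $f''$-term does not spoil the bound.
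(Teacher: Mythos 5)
Your argument is essentially the paper's own proof: the same chain-rule decomposition $\Delta_g\psi = f''(r)\,\|\nabla r\|_g^2 + f'(r)\,\Delta_g r$, the bound $\|\nabla r\|_g \le |Dr| = 1$ coming from $g^{ij}\le\sigma^{ij}$, the comparison $\Delta_g r \le m\bar\kappa'/r$ of Remark \ref{rem_distances}, the elementary estimates $1/\sqrt{a^2+r^2}\le 1/a$, $a^2/(a^2+r^2)^{3/2}\le 1/a$, and Calabi's trick to pass to the barrier sense. The only step you leave vague, absorbing $1+m\bar\kappa'$ (with $\bar\kappa'=(1+\sqrt{1+4\bar\kappa^2})/2\le 1+\bar\kappa$) into the constant $(m+1)\max\{1,\bar\kappa\}$, is treated with exactly the same bookkeeping in the paper, so nothing substantive is missing relative to the published argument.
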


\begin{remark}
\emph{By its very definition, a solution in the barrier sense is also a solution in the viscosity sense, see \cite{mmu} for comments.
}
\end{remark}

\begin{proof}
	Outside of $\{o\} \cup \cut(o)$, a direct computation yields $|D\psi| = \frac{r}{\psi}|Dr| < 1$ and
	$$
		\Delta_g \psi = \frac{r \Delta_g r}{\sqrt{a^2+r^2}}  + \frac{a^2\|\nabla r\|^2}{(a^2 + r^2)^{3/2}}.
	$$
	From $\|\nabla r\|^2 = g^{ij}r_ir_j \le |Dr|^2 = 1$ and Remark \ref{rem_distances},
	\begin{align*}
		\Delta_g \psi & \le \frac{1}{\sqrt{a^2 + r^2}} \left( \frac{m(1+\sqrt{1+4\bar\kappa^2})}{2} + \frac{a^2}{a^2 + r^2} \right)
	\end{align*}
	and the conclusion follows by observing that $\frac{1}{\sqrt{a^2 + r^2}} \leq \frac{1}{a}$ and that
	$$
		\frac{m(1+\sqrt{1+4\bar\kappa^2})}{2} + \frac{a^2}{a^2 + r^2} \leq m(1+\bar\kappa) + 1 \leq (m+1) \max\{1,\bar\kappa\} \, .
	$$
	The validity of the inequality in the barrier sense can be proved by Calabi's trick, see for instance \cite[Prop. 7.4]{maripessoa}.
\end{proof}

\begin{proof}[Proof of Theorem \ref{teo_gradient_decay}]
	Without loss of generality, we can assume $\inf_{B_R} u = 0$. Then
	$$
	\gamma^\ast = \sup_{x\in B_{R_1}} \gamma(x) = \frac{\sup_{B_{R_1}}u}{R} \, .
	$$
	As in the statement of the theorem, fix $\tau \in (0,1)$ and $\eps>0$, choose $q>0$ and $a_0>0$ such that
	\begin{equation} \label{par_1b}
		\frac{\sqrt{\eps^2+1}-\sqrt{(R_1/R)^2 + \eps^2}}{\gamma^\ast} > q > \frac{1}{\sqrt{\tau} a_0 \gamma^\ast}
	\end{equation}
	and then $L>0$ which satisfies
	\begin{equation} \label{par_2}
		(1-\tau)\left(q^2 - \frac{1}{\tau a_0^2(\gamma^\ast)^2}\right)L^2 - \frac{(m+1) \bar \kappa_0 L}{\eps R} > (m-1)\kappa^2 \, ,
	\end{equation}
	where $\bar\kappa_0 = \max\{1,\bar\kappa\}$. Set
	$$
	C = qL \, , \qquad \delta = e^{-LR\sqrt{\eps^2+1}}, 
	$$
	define the function
	$$
	\psi = \sqrt{\eps^2 R^2 + r^2}
	$$
	where $r(x) = \dist_\sigma(o,x)$, and let
	$$
	\eta = e^{-Cu-L\psi} - \delta \, , \qquad z = W \eta \, .
	$$
	By writing
	$$
	\eta = \delta \left( e^{LR(\sqrt{\eps^2+1} - \sqrt{\eps^2 + (r/R)^2} - qu/R)} - 1 \right)
	$$
	we see that for every $x\in B_{R_1}$
	\begin{align*}
		\eta(x) & = \delta \left( e^{LR(\sqrt{\eps^2+1} - \sqrt{\eps^2 + r(x)^2/R^2} - q\gamma(x))} - 1 \right) \\
		& \geq \delta \left( e^{LR(\sqrt{1+\eps^2} - \sqrt{(R_1/R)^2 + \eps^2} - q\gamma^\ast)} - 1 \right) > 0
	\end{align*}
	as a consequence of \eqref{par_1b}. Noting that, on $\partial B_R$, 
	$$
	\eta = \delta \left( e^{-qLu} - 1 \right) \le 0,
	$$
	the set 
	$$
	\Omega = \{ x \in \overline{B_R} : z(x) > 0 \} \equiv \{ x \in \overline{B_R} : \eta(x) > 0 \}
	$$
	is non-empty and satisfies $B_{R_1} \subseteq \Omega \subseteq B_R$. Therefore, there exists $x_0 \in \Omega$ such that
	$$
	0 < z(x_0) = \max_\Omega z.
	$$
	The function $z$ satisfies
	$$
	\Delta_g z - 2\langle\nabla z,\nabla \log W\rangle \geq \left( -(m-1)\kappa^2\|\nabla u\|^2 + \frac{\Delta_g\eta}{\eta}\right) z \qquad \text{on } \, \Omega.
	$$
	The above inequality has to be interpreted in the viscosity sense, in case $x_0$ is not a point where $r$ (hence, $\psi$) is smooth. By the maximum principle, necessarily
	\begin{equation}\label{main-inequality}
		-(m-1)\kappa^2 \|\nabla u\|^2 + \frac{\Delta_g \eta}{\eta} \leq 0 \qquad \text{at } \, x_0 \, 
	\end{equation}
	in the viscosity sense. We compute
	$$
	\Delta_g\eta = (\eta+\delta)\left( -C\Delta_g u - L\Delta_g\psi + \| C\nabla u + L\nabla\psi \|^2 \right).
	$$
	We recall that $W^{-2}(\sigma^{ij})_{i,j} \leq (g^{ij})_{i,j} \leq (\sigma^{ij})_{i,j}$ in the sense of quadratic forms, hence
	\begin{align*}
		\| C\nabla u + L\nabla\psi \|^2 & = g^{ij} (Cu_i+L\psi_i)(Cu_j+L\psi_j) \\
		& \geq \frac{1}{W^2} \sigma^{ij}(Cu_i+L\psi_i)(Cu_j+L\psi_j) \\
		& \geq \frac{1}{W^2}|C Du + L D\psi|^2 \, .
	\end{align*}
	It follows that
	$$
	\frac{\Delta_g\eta}{\eta+\delta} \geq  -C\Delta_g u - L\Delta_g\psi + \frac{1}{W^2}|C Du + L D\psi|^2 .
	$$
	Using $\Delta_g u = 0$ and Young's inequality we obtain
	$$
	\frac{\Delta_g\eta}{\eta+\delta} \geq -L\Delta_g\psi+(1-\tau)C^2\frac{|D u|^2
	}{W^2}-L^2\frac{1-\tau}{\tau}\frac{|D \psi|^2}{W^2}.
	$$
	Taking into account Lemma \ref{lem_deltapsi_decay} we infer
	\[
	|D\psi| < 1, \qquad \Delta_g \psi \le \frac{(m+1)\bar\kappa_0}{\eps R} \, .
	\]
	Substituting these estimates in the above inequality, we deduce
	$$
	\frac{\Delta_g\eta}{\eta+\delta} \geq (1-\tau)C^2\frac{|Du|^2}
	{W^2}-L\left(\frac{(m+1)\bar\kappa_0}{\epsilon R}+\frac{1-\tau}{\tau}\frac{L}{W^2}\right)
	$$
	If $|Du(x_0)| \geq a_0 \gamma^\ast$ then $\frac{|Du(x_0)|^2}{W^2a_0^2(\gamma^\ast)^2} \geq \frac{1}{W^2}$. Thus, we can further estimate
	$$
	\frac{\Delta_g\eta}{\eta+\delta} \geq (1-\tau)\left(q^2 - \frac{1}{\tau a_0^2 (\gamma^\ast)^2} \right) L^2 \frac{|D u|^2}
	{W^2} - \frac{(m+1)\bar\kappa_0L}{\eps R} \qquad \text{at } \, x_0
	$$
	that is,
	\begin{equation} \label{zeroth-estimate}
		\frac{\Delta_g\eta}{\eta+\delta} \geq a_1 \|\nabla u\|^2 - a_2
	\end{equation}
	with
	$$
	a_1 = (1-\tau)\left(q^2 - \frac{1}{\tau a_0^2(\gamma^\ast)^2} \right) L^2> 0 \, , \qquad a_2 = \frac{(m+1)\bar\kappa_0L}{\eps R} >0.
	$$
	Since
	$$
	a_3 = a_1 - (m-1)\kappa^2
	$$
	we have $a_1 \geq a_3 > 0$ by condition \eqref{par_2}. We claim that 
	$$
		\frac{|D u(x_0)|^2}{W^2(x_0)}=\|\nabla u(x_0)\|^2 \leq \frac{a_2}{a_3} \, ,
	$$
	that is,
	\begin{equation}\label{first-estimate}
		W(x_0)\leq\sqrt[]{\frac{a_3}{a_3-a_2}} \, .
	\end{equation}
Indeed, assume by contradiction that
	\begin{equation} \label{abs-estimate}
		\|\nabla u(x_0)\|^2 > \frac{a_2}{a_3}.
	\end{equation}
	Then, from \eqref{zeroth-estimate} it follows
	$$
		\frac{\Delta_g\eta}{\eta+\delta} \geq a_3\|\nabla u\|^2 - a_2 > 0 \qquad \text{at } \, x_0 \, ,
	$$
	hence $\Delta_g\eta > 0$ and, by \eqref{main-inequality} and \eqref{zeroth-estimate} again,
	$$
		(m-1)\kappa^2\|\nabla u\|^2 \geq \frac{\Delta_g\eta}{\eta} \geq \frac{\Delta_g\eta}{\eta+\delta} \geq a_1\|\nabla u\|^2 - a_2 \qquad \text{at } \, x_0 \, ,
	$$
	leading to
	$$
		a_2 \geq (a_1 - (m-1)\kappa^2)\|\nabla u\|^2 = a_3\|\nabla u\|^2  \qquad \text{at } \, x_0 \, ,
	$$
	which contradicts \eqref{abs-estimate} and proves our claim. 
	
	On the other hand, if $|Du(x_0)|\leq a_0\gamma^\ast$, then 
	\begin{equation}\label{second-estimate}
		W(x_0)\leq\sqrt[]{1+a_0^2(\gamma^\ast)^2}
	\end{equation}
	Since $x_0$ is a global maximum point for $z$ in $\Omega$, we have $z(x) \leq z(x_0)$, that is,
	$$
	W(x) \leq W(x_0) \frac{\eta(x_0)}{\eta(x)}
	$$
	for every $x\in B_{R_1} \subseteq \Omega$. Note that
	$$
	\begin{array}{lcl}
	\disp \frac{\eta(x_0)}{\eta(x)} & = & \disp \frac{e^{LR(\sqrt{\eps^2 + 1} - \sqrt{\eps^2 + r(x_0)^2/R^2} - qu(x_0)/R)} - 1}{e^{LR(\sqrt{\eps^2 + 1} - \sqrt{\eps^2 + r(x)^2/R^2} - q\gamma(x))} - 1} \\[0.4cm]
	& \leq & \disp \frac{e^{LR(\sqrt{\eps^2 + 1}-\eps)} - 1}{e^{LR(\sqrt{\eps^2 + 1} - \sqrt{\eps^2 + r(x)^2/R^2} - q\gamma(x))} - 1}
	\end{array}
	$$
	hence
	$$
	W(x) \leq W(x_0) \left( \frac{e^{LR(\sqrt{\eps^2 + 1} - \eps)} - 1}{e^{LR(\sqrt{\eps^2 + 1} - \sqrt{\eps^2 + r(x)^2/R^2} - q\gamma(x))} - 1} \right) \, .
	$$
	The latter, together with \eqref{first-estimate} and \eqref{second-estimate}, implies the desired estimate.
	%
	%
\end{proof}

\section{Uniformly elliptic operators on manifolds with $\Ricc \geq 0$} \label{sec_L}

Having shown that an entire minimal graph with at most linear growth on one side has globally bounded gradient, we need to show \eqref{Dv} and \eqref{Hess_v}. We shall prove both of them under the only conditions
	\begin{equation}\label{eq_persezione}
	\Ricc \ge 0, \qquad |Du| \in L^\infty(M).
	\end{equation}
In such generality, it seems difficult to apply the ``elliptic" approach in \cite{ccm}, adapted in \cite{dingjostxin,ding_new}. To justify the statement, we observe that the method in \cite{ccm} relies on the construction of a function $\varrho$ satisfying
	\begin{equation}\label{eq_varrho}
	C^{-1} r \le \varrho \le Cr, \qquad |D\varrho| \le C, \qquad \Delta_g \varrho \le \frac{C}{\varrho},
	\end{equation}
for some absolute constant $C$. When considering harmonic functions, the third condition is replaced by $\Delta \varrho \le C/\varrho$, thus by comparison theory the choice $\varrho = r$ is admissible. On the contrary, to our knowledge, for minimal graphs the existence of $\varrho$ satisfying \eqref{eq_varrho} is currently unknown under the sole assumptions \eqref{eq_persezione}. If $M$ has Euclidean volume growth, we mention that in \cite{dingjostxin, ding_new} the authors used as $\varrho$ a reparametrization of the Green kernel of the Laplacian on $M$. Although the inequality $\Delta_g \varrho \le C/\varrho$ may not hold pointwise, the integral estimates for $|D^2\varrho|$ provided in \cite{cm} suffice to estimate $\Delta_g \varrho$ and apply the method in \cite{ccm}, as done in \cite[Lemma 7.1]{ding_new}. However, to our knowledge, estimates like those in \cite{cm} are not yet (if ever) available on manifolds with $\Ricc \ge 0$ but whose volume growth is less than Euclidean.
 
For these reasons, inspired by \cite{liann,sal92} we choose a different approach via the heat equation. Throughout this section, let $(M,\sigma)$ be a complete Riemannian manifold of dimension $m\geq 2$ with $\Ricc\geq 0$. Let $L$ be the linear uniformly elliptic operator defined by
\begin{equation} \label{L_def}
	L\psi = \div(AD\psi)
\end{equation}
where $A$ is a measurable section of $T^{1,1}M$ satisfying
\begin{equation} \label{A_cond}
	\text{i)} \;\; \alpha^{-1}|X|^2 \leq \langle AX,X\rangle \quad \text{and} \quad \text{ii)} \;\; |AX| \leq \alpha|X| \qquad \forall X\in TM
\end{equation}
for some constant $\alpha>0$. Hereafter, we shall assume that $A$ is smooth, the general case being obtainable by approximation.

We denote by $H_L(x,y,t)$ the minimal heat kernel associated to the parabolic operator $\partial_t - L$, that is, the unique continuous function on $M\times M\times\RR^+$ such that for every $\psi\in C^\infty_0(M)$ the function $u$ defined by
$$
	u(t,x) = \int_M H_L(x,y,t) \psi(y) \, \di y \qquad \forall \, (t,x) \in \RR^+\times M
$$
is a solution to 
\begin{equation} \label{h_eq}
	\partial_t u = L u
\end{equation}
on $\RR^+\times M$ satisfying
\begin{itemize}
	\item [i)] $u(t,\,\cdot\,) \to \psi$ pointwise on $M$ as $t\searrow 0$,
	\item [ii)] $u \leq v$ on $(0,T) \times M$ for every $v \in C^2([0,T)\times M)$, $T>0$, such that
	$$
		\begin{cases}
			\partial_t v = L v & \text{on } \, (0,T)\times M \, , \\
			\psi \leq v(0,\,\cdot\,) & \text{on } \, M \, .
		\end{cases}
	$$
\end{itemize}
If the endomorphism $A$ is self-adjoint with respect to $\metric$, the minimal heat kernel $H_L$ is a symmetric function of the space variables, that is,
\begin{equation}
	H_L(x,y,t) = H_L(y,x,t) \qquad \forall \, x,y\in M, \, \forall \, t > 0 \, .
\end{equation}
By \cite{sal92}, see Corollary 6.2 and Theorem 6.3, there exist positive constants $C_i > 0$, $1\leq i\leq 6$, depending only on $m$ and $\alpha$ such that, for every $x,y\in M$ and $t>0$,
\begin{equation} \label{H_Gbound}
	C_1 \frac{\exp\left(-C_2\dfrac{\dist(x,y)^2}{t}\right)}{\sqrt{\left|B_{\sqrt{t}}(x)\right|\left|B_{\sqrt{t}}(y)\right|}} \leq H_L(x,y,t) \leq C_3 \frac{\exp\left(-C_4\dfrac{\dist(x,y)^2}{t}\right)}{\sqrt{\left|B_{\sqrt{t}}(x)\right|\left|B_{\sqrt{t}}(y)\right|}}
\end{equation}
and
\begin{equation} \label{dtH_Gbound}
	|\partial_t H_L(x,y,t)| \leq \frac{C_5}{t} \frac{\exp\left(-C_6\dfrac{\dist(x,y)^2}{t}\right)}{\sqrt{\left|B_{\sqrt{t}}(x)\right|\left|B_{\sqrt{t}}(y)\right|}}
\end{equation}

Remarks on \eqref{H_Gbound} will be given in the Appendix.
We first need the following simple estimate on the volume of geodesic balls.

\begin{lemma} \label{lem_balls}
	Let $(M^m,\sigma)$ be a complete manifold with $\Ricc \geq 0$. For every $x,y\in M$ and for every $R>0$ it holds 
	$$
		\left|B_R(x)\right| \left(1 + \frac{\dist(x,y)}{R}\right)^{-\frac{m}{2}} \leq \sqrt{\left|B_R(x)\right|\left|B_R(y)\right|} \leq \left|B_R(x)\right| \left(1 + \frac{\dist(x,y)}{R}\right)^{\frac{m}{2}}
	$$
\end{lemma}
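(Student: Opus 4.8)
The plan is to derive both inequalities from the Bishop--Gromov volume comparison theorem together with elementary inclusions between metric balls. Write $d = \dist(x,y)$ for brevity. Since $\Ricc \geq 0$, for every $p \in M$ the function $\rho \mapsto |B_\rho(p)|/\rho^m$ is non-increasing on $(0,\infty)$; in particular, for $0 < \rho_1 \leq \rho_2$ one has $|B_{\rho_2}(p)| \leq (\rho_2/\rho_1)^m\, |B_{\rho_1}(p)|$.

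For the upper bound, the triangle inequality gives $B_R(y) \subseteq B_{R+d}(x)$, hence
\[
|B_R(y)| \leq |B_{R+d}(x)| \leq \left( \frac{R+d}{R} \right)^{m} |B_R(x)| = \left( 1 + \frac{d}{R} \right)^{m} |B_R(x)|.
\]
Multiplying by $|B_R(x)|$ and taking square roots yields $\sqrt{|B_R(x)||B_R(y)|} \leq |B_R(x)| \left(1 + d/R\right)^{m/2}$.

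For the lower bound, symmetrically $B_R(x) \subseteq B_{R+d}(y)$, so $|B_R(x)| \leq \left(1 + d/R\right)^{m} |B_R(y)|$, that is, $|B_R(y)| \geq \left(1 + d/R\right)^{-m} |B_R(x)|$; multiplying by $|B_R(x)|$ and taking square roots gives the claimed lower bound. There is no real obstacle here: the entire content is the combination of Bishop--Gromov monotonicity with the inclusions $B_R(y) \subseteq B_{R+d}(x)$ and $B_R(x) \subseteq B_{R+d}(y)$.
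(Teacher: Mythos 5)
Your proof is correct and follows essentially the same route as the paper: Bishop--Gromov monotonicity of $\rho\mapsto|B_\rho(p)|/\rho^m$ combined with the inclusion $B_R(y)\subseteq B_{R+\dist(x,y)}(x)$ (and its symmetric counterpart), then multiplying by $|B_R(x)|$ and taking square roots. No issues.
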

\begin{proof}
By Bishop-Gromov's comparison Theorem we have
\[
\frac{|B_R(x)|}{|B_r(x)|}\leq \left( \frac{R}{r}\right)^m, \qquad 0<r \le R<\infty,
\]
thus 
\[
|B_R(y)|\leq|B_{R+\dist(x,y)}(x)|\leq|B_R(x)|\left(1+\frac{\dist(x,y)}{R}\right)^m,
\]
and the thesis follows.
\end{proof}

Next, we recall that $L$ generates a diffusion which is stochastically complete (cf. \cite{grigoryan}), that is, the following holds:

\begin{lemma} \label{lem_stoch}
	Let $M$ be a complete manifold with $\Ricc\geq 0$, and let $A,L$ be as in \eqref{L_def}-\eqref{A_cond}, with $A$ self-adjoint and smooth. Then
	$$
		\int_M H_L(x,y,t) \, \di y = 1 \qquad \forall \, (t,x) \in \RR^+ \times M \, .
	$$
\end{lemma}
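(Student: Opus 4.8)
First I would record the trivial inequality $\int_M H_L(x,y,t)\,\di y\le 1$. This is automatic from the very construction of the \emph{minimal} heat kernel: $H_L$ is the increasing limit, along a smooth exhaustion $\{\Omega_n\}$ of $M$, of the Dirichlet heat kernels $H_L^{\Omega_n}$, and each of these satisfies $\int_{\Omega_n}H_L^{\Omega_n}(x,y,t)\,\di y\le 1$ by the parabolic maximum principle on the relatively compact domain $\Omega_n$; monotone convergence then gives the bound on $M$. Moreover, using $\int_{B_1(x)}H_L(x,y,t)\,\di y\ge\int_{B_1(x)}H_L^{\Omega_n}(x,y,t)\,\di y$ together with the initial condition of the Dirichlet kernels, one checks $\int_M H_L(x,y,t)\,\di y\to 1$ as $t\searrow 0$. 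Hence it only remains to prove the reverse inequality for all $t>0$, i.e.\ that the $L$-diffusion does not explode.

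For the reverse inequality I would set $w(t,x)=1-\int_M H_L(x,y,t)\,\di y\ge 0$; by the semigroup property $w$ is a bounded, non-negative solution of $\partial_t w=Lw$ on $\R^+\times M$ with $w(0^+,\cdot)\equiv 0$, so the assertion is equivalent to the conservativeness (stochastic completeness) of the semigroup generated by $L$. To obtain it I would invoke a volume test. The point is that $A$ obeys the uniform ellipticity bounds \eqref{A_cond}, so the strongly local, regular Dirichlet form $\mathcal E(\phi,\phi)=\int_M\langle AD\phi,D\phi\rangle\,\di x$ has intrinsic metric $d_L$ bi-Lipschitz equivalent to the Riemannian distance $\dist$; consequently the $d_L$-balls are comparable, uniformly, to the geodesic balls $B_R(x)$. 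Since $\Ricc\ge 0$, the Bishop--Gromov comparison theorem (already used in Lemma~\ref{lem_balls}) gives $|B_R(x)|\le c_m R^m$ for every $R>0$, with $c_m$ depending only on $m$, whence
$$
\int^\infty\frac{R\,\di R}{\log|B_R(x)|}\ \ge\ \int^\infty\frac{R\,\di R}{\log\big(c_m R^m\big)}\ =\ +\infty .
$$
The volume criterion for non-explosion of Grigor'yan (in the form valid for uniformly elliptic operators, equivalently for strongly local Dirichlet forms with intrinsic metric, cf.\ \cite{grigoryan}) then forces $w\equiv 0$, that is $\int_M H_L(x,y,t)\,\di y=1$ for all $(t,x)$.

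The main obstacle is precisely that $L$ is a general uniformly elliptic operator, not the Laplace--Beltrami operator of a metric: the classical Riemannian stochastic-completeness arguments -- and, say, Khas'minskii-type constructions based on computing $Lf(r)$ for radial $f$ -- are not directly available, since we only control $A$ through the $L^\infty$ bounds \eqref{A_cond} and have no information on its derivatives. The way around this is to move to the associated Dirichlet form and its intrinsic metric, whose comparability with $\dist$ reduces everything to the polynomial volume bound above; alternatively one may run the argument through the two-sided Gaussian estimates \eqref{H_Gbound} (which already encode the parabolic Harnack inequality for $L$) and the conservativeness criterion phrased in terms of Gaussian bounds plus volume growth, the sub-Euclidean growth coming again from $\Ricc\ge 0$ making the relevant integral test diverge.
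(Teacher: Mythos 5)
Your argument is correct, but it follows a different route from the paper's. You reduce the identity to conservativeness of the semigroup (via $w(t,x)=1-\int_M H_L(x,y,t)\,\di y$, a bounded non-negative caloric function with vanishing initial datum) and then verify conservativeness by the Grigor'yan volume test transplanted to the strongly local Dirichlet form $\int_M\langle AD\phi,D\phi\rangle\,\di x$, using that the uniform ellipticity \eqref{A_cond} makes the intrinsic metric bi-Lipschitz equivalent to $\dist_\sigma$ and that Bishop--Gromov gives $|B_R|\le c_mR^m$. The paper instead quotes an elliptic Liouville theorem: by \cite[Theorem 4.1]{amr}, uniform ellipticity plus polynomial volume growth force every bounded solution of $Lv=\lambda v$, $\lambda>0$, to vanish, and this Liouville property is equivalent to $\int_M H_L(x,\cdot,t)\,\di y\equiv 1$ by \cite[Theorem 3.11]{prsmemoirs}. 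The two proofs hinge on exactly the same geometric input (ellipticity constants only, plus $\Ricc\ge 0$ through volume growth), so they are parallel in spirit; yours buys a self-contained probabilistic/Dirichlet-form picture, while the paper's buys shorter citations to results already stated for divergence-form operators. Two caveats for your write-up: the volume test in the form you need (intrinsic metric, merely measurable or $L^\infty$-controlled coefficients) is due to Sturm's extension of Grigor'yan's criterion, and is not literally contained in the survey \cite{grigoryan}, which treats the Laplace--Beltrami and weighted cases; and your alternative sketch ``via the two-sided bound \eqref{H_Gbound}'' must use only the upper Gaussian bound from \cite{sal92}, since the paper's Appendix derives the lower bound \emph{from} Lemma \ref{lem_stoch}, so invoking the lower bound here would be circular.
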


The result is stated with no proof in the discussion following \cite[Theorem 7.4]{sal92}. We here provide an argument for the convenience of the reader.

\begin{proof}
Since $L$ is uniformly elliptic and $M$ has polynomial volume growth as a consequence of $\Ricc\geq0$, by Theorem 4.1 of \cite{amr} we have that for any $\lambda>0$ the only entire bounded solution $v$ of $Lv = \lambda v$ on $M$ is $v\equiv 0$. Then the conclusion follows by \cite[Theorem 3.11]{prsmemoirs}.
\end{proof}

With the above preparation, we are ready to state the following asymptotic mean value theorem. Our method is inspired by the one in \cite{liann}, where the author considered the case $L = \Delta$, but with \tcb{a difference to be stressed.} Indeed, in \cite{liann} the author \tcr{uses} the Li-Yau's differential Harnack inequality to get rid of a boundary term at infinity. The inequality holds for solutions of the heat equation, but in general it \tcr{may fail} for solutions of $\partial_t u = Lu$, unless one has a uniform control on the gradient of $A$ on the entire $M$, \tcr{see for instance \cite[p.~433]{sal92}}. As we will apply our results to $A = W \mathrm{Id} - W^{-1} \di u \otimes Du$, with $W = \sqrt{1+|Du|^2}$, 
in our setting only an $L^\infty$ control on $A$ is available. \tcb{One may therefore use De Giorgi-Nash-Moser's theory to get H\"older estimates in space for $u$, see \cite[Corollary 5.5]{sal92}, but these seem insufficient to treat the boundary term.} 

In view of the above, we shall modify the method in \cite{liann}. \tcb{The main idea here is the use of upper level sets of $H_L$ rather than geodesic balls.} Note that we do not assume a Euclidean volume growth. We start with the following

\begin{lemma}\label{lem_partialtu}
	Let $(M^m,\metric)$ be a complete, noncompact manifold with $\Ricc\geq0$ and let $A,L$ be as in \eqref{L_def}-\eqref{A_cond}, with $A$ self-adjoint and smooth. If $f \in C^2(M) \cap L^\infty(M)$ satisfies $Lf\leq 0$ on $M$ then the function $u : \RR^+ \times M \to \RR$ given by
	\begin{equation} \label{u_def}
		u(t,x) = \int_M f(y) H_L(x,y,t) \, \di y \qquad \forall \, (t,x) \in \RR^+\times M
	\end{equation}
	satisfies
	\begin{equation} \label{li_u_mon}
		\partial_t u \leq 0 \qquad \text{on } \, \RR^+\times M \, .
	\end{equation}
\end{lemma}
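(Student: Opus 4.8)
The plan is to differentiate \eqref{u_def} in $t$ and recognize the result as a heat-semigroup evaluation of $Lf$. Formally, since $\partial_t H_L = L_x H_L = L_y H_L$ (using self-adjointness of $A$), one has
$$
\partial_t u(t,x) = \int_M f(y)\, \partial_t H_L(x,y,t)\, \di y = \int_M f(y)\, L_y H_L(x,y,t)\, \di y = \int_M (Lf)(y)\, H_L(x,y,t)\, \di y \le 0,
$$
the last equality by ``integration by parts'' in $y$ and the last inequality because $Lf \le 0$, $f$ and $H_L$ are bounded (for fixed $t$, the kernel decays Gaussianly by \eqref{H_Gbound}), and $H_L \ge 0$. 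So the content of the proof is entirely in justifying (a) that we may differentiate under the integral sign, and (b) that the integration by parts carries no boundary term at infinity, despite $f$ being merely bounded (not decaying).

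First I would set up (a): fix $x$ and a compact time interval $[t_1,t_2]\subset\RR^+$. Using the Gaussian bound \eqref{dtH_Gbound} on $|\partial_t H_L|$ together with Lemma \ref{lem_balls} and the polynomial volume growth coming from $\Ricc\ge 0$ (Bishop--Gromov), one checks that $y\mapsto \sup_{t\in[t_1,t_2]}|\partial_t H_L(x,y,t)|$ is integrable on $M$; since $f\in L^\infty$, dominated convergence legitimizes $\partial_t u(t,x)=\int_M f(y)\,\partial_t H_L(x,y,t)\,\di y$. Next, substitute $\partial_t H_L = L_y H_L$. For step (b), the natural route is to exhaust $M$ by geodesic balls $B_R=B_R(x)$, write
$$
\int_{B_R} f(y)\, L_y H_L(x,y,t)\, \di y = \int_{B_R} (Lf)(y)\, H_L(x,y,t)\, \di y + \int_{\partial B_R}\Big( f\, \langle A D_y H_L,\nu\rangle - H_L\,\langle A D f,\nu\rangle\Big)\, \di\haus^{m-1},
$$
by Green's second identity for the operator $L=\div(AD\,\cdot\,)$ (valid since $A$ is smooth and self-adjoint), and then show the boundary integral tends to $0$ along a suitable sequence $R_k\to\infty$. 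The interior integrand is controlled by the same Gaussian estimate as above, so the first term converges to $\int_M (Lf)H_L\,\di y$ and we are done once the boundary term is killed.

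The main obstacle is exactly that boundary term, since $f$ and $|Df|$ are not assumed to decay. Here $f\in L^\infty$ and $H_L$ decays like $\exp(-c\,\dist(x,y)^2/t)/|B_{\sqrt t}(x,y)|$ by \eqref{H_Gbound}, and one expects (from standard parabolic regularity, e.g.\ the interior Caccioppoli / De Giorgi--Nash--Moser estimates, or differentiating \eqref{H_Gbound}) a gradient bound $|D_y H_L(x,y,t)|\lesssim t^{-1/2}\exp(-c'\dist(x,y)^2/t)/|B_{\sqrt t}(x,y)|$ on an annulus. The term $f\langle A D_y H_L,\nu\rangle$ is then integrable over $M$ in the radial sense after using the coarea formula and $\haus^{m-1}(\partial B_R)\le \frac{\di}{\di R}|B_R|$ together with polynomial growth of $|B_R|$, so a subsequence of radii makes $\int_{\partial B_R}|f\,\langle A D_y H_L,\nu\rangle|\,\di\haus^{m-1}\to 0$; the term $H_L\langle A Df,\nu\rangle$ requires only that $f\in C^1$ with $|Df|$ locally bounded, combined with the Gaussian decay of $H_L$ and the same coarea/subsequence argument. (Alternatively, one may avoid gradient estimates for $H_L$ entirely by testing the identity $\partial_t H_L = L_y H_L$ against a cutoff $\chi_R$ supported in $B_{2R}$, equal to $1$ on $B_R$, with $|D\chi_R|\le C/R$; then the ``boundary'' contributions become integrals over $B_{2R}\setminus B_R$ of $f\,\langle A D_y H_L, D\chi_R\rangle$ and $H_L\,\langle ADf,D\chi_R\rangle$, each bounded by $C R^{-1}\|f\|_\infty$ or $C R^{-1}$ times a $\dist(x,\cdot)$-weighted integral of the Gaussian-decaying kernel or its gradient, which vanish as $R\to\infty$ by \eqref{H_Gbound}, Lemma \ref{lem_balls} and polynomial volume growth — this is the cleaner option and I would write it this way.) Letting $R\to\infty$ yields $\partial_t u = \int_M (Lf)\,H_L\,\di y$, and since $Lf\le 0$ and $H_L\ge 0$ this is $\le 0$, which is \eqref{li_u_mon}.
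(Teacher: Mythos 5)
Your overall skeleton (differentiate under the integral using \eqref{dtH_Gbound} and Lemma \ref{lem_balls}, substitute $\partial_t H_L = L_y H_L$, integrate by parts, and then kill the boundary/annulus contribution) is the same as the paper's, and the first two steps are fine. The gap is in the step you yourself identify as the main obstacle. First, the pointwise Gaussian gradient bound $|D_y H_L(x,y,t)|\lesssim t^{-1/2}e^{-c'\dist(x,y)^2/t}/|B_{\sqrt t}(x)|$ is not available in this setting: De Giorgi--Nash--Moser theory for divergence-form operators with only an ellipticity bound gives H\"older continuity in space, not Lipschitz/gradient bounds, and one cannot ``differentiate \eqref{H_Gbound}''; any pointwise gradient estimate would carry constants depending on derivatives of $A$, which are not controlled (this is exactly the obstruction the paper flags in the paragraph preceding the lemma, where it explains why Li's original argument, which disposes of the boundary term via the Li--Yau differential Harnack inequality, does not extend to $\partial_t u = Lu$). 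Second, in both your ball and cutoff variants the term involving $Df$ on the annulus $B_{2R}\setminus B_R$ (or on $\partial B_R$) is not controlled by the hypotheses: $f\in C^2(M)\cap L^\infty(M)$ carries no global bound on $|Df|$, and ``locally bounded'' does not help, since $\sup_{B_{2R}}|Df|$ may grow faster than the Gaussian decay of $H_L$ as $R\to\infty$; in the intended application $Df$ involves $D^2u$ of the minimal graph, which is precisely what one is trying to estimate. (The cutoff route could in principle be repaired by replacing the pointwise bounds with integrated ones -- a parabolic Caccioppoli inequality for $H_L$ on annuli, with constants depending only on $\alpha$, plus a Caccioppoli-type estimate for the bounded non-negative supersolution $f$ -- but none of this is in your proposal, whose stated justifications do not stand.)

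The paper avoids the problem altogether by a different choice of exhaustion: after normalizing $\inf_M f=0$ (legitimate by Lemma \ref{lem_stoch}), it works on the superlevel sets $\Omega_a=\{H_L(x,\cdot,t)>a\}$ and applies Green's identity to $f$ and $\varphi_a=H_L(x,\cdot,t)-a$. Since $\varphi_a=0$ on $\partial\Omega_a$, the boundary term containing $\langle A Df,\nu\rangle$ vanishes identically, and the remaining boundary term $f\,\langle A D\varphi_a,\nu\rangle$ is non-positive by sign considerations ($f\ge0$, $\nu=-D\varphi_a/|D\varphi_a|$, $A$ positive definite), so it is simply discarded rather than estimated. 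This yields $\int_{\Omega_a} f\,\partial_t H_L\le\int_{\Omega_a}\varphi_a\,Lf\le0$ and hence \eqref{li_u_mon} in the limit $a\to0^+$, with no gradient estimate for $H_L$ and no control on $Df$ needed. You would need to adopt this (or an equivalent quantitative repair of the cutoff argument) for your proof to close.
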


\begin{proof}
	Note that the integral on the RHS of \eqref{u_def} converges for every $(t,x)\in\RR^+\times M$ since $f\in L^\infty(M)$ and because of \eqref{H_Gbound} and Lemma \ref{lem_balls}. Also note that $H_L$ is smooth as a consequence of the regularity assumptions on $A$. By Lemma \ref{lem_stoch}, $u$ only varies by an additive constant if so does $f$, hence without loss of generality we can assume $\inf_M f = 0$. Let $(t,x) \in \RR^+ \times M$ be fixed. For notational convenience, for every $a>0$ we define
	\begin{equation}\label{def_varphia}
		\varphi^a(y) = H_L(x,y,t)-a \ \ \  \forall \, y \in M, \qquad \Omega_a = \{ y \in M : \varphi_a(y) > 0 \}.
	\end{equation}
	Because of \eqref{H_Gbound} it holds $H_L(x,y,t) \to 0$ as $y \to \infty$ in $M$, hence the collection $\{\Omega_a\}_{a>0}$ is an exhaustion of $M$ by relatively compact open subsets, with $\Omega_a \subseteq \Omega_b$ when $a \geq b$. By \eqref{dtH_Gbound} and boundedness of $f$, we can apply Lebesgue's dominated convergence theorem to get
	\begin{equation} \label{li_pp0}
		\partial_t u(t,x) = \int_M f(y) \partial_t H_L(x,y,t) \, \di y = \lim_{a\to 0^+} \int_{\Omega_a} f(y) \partial_t H_L(x,y,t) \, \di y.
	\end{equation}
	Therefore, since $Lf \le 0$ and $\varphi_a>0$ on $\Omega_a$, \eqref{li_u_mon} holds \tcr{by monotone convergence} if we prove the inequality
	\begin{equation}\label{eq-negat}
		\int_{\Omega_a} f(y) \partial_t H_L(x,y,t) \, \di y \leq \int_{\Omega_a} \varphi_a(y) Lf (y) \, \di y.
	\end{equation}
	Because of $\partial_t H_L(x,y,t) = L_y H_L(x,y,t) = L\varphi(y) = L\varphi_a(y)$, we have
	$$
		\int_{\Omega_a} f(y) \partial_t H_L(x,y,t) \, \di y = \int_{\Omega_a} f(y) L_y H_L(x,y,t) \, \di y = \int_{\Omega_a} f(y) L \varphi_a(y) \, \di y \, .
	$$
	Since $H_L\in C^\infty(\RR^+\times M)$, we have $\varphi \in C^\infty(M)$ and for almost every $a>0$ the set $\Omega_a$ has smooth boundary. Let $a>0$ be a regular value for $\varphi$. By Green's identity, since $\varphi_a = 0$ on $\partial\Omega_a$
	$$
		\int_{\Omega_a} f(y) L \varphi_a(t,y) \, \di y = \int_{\Omega_a} \varphi_a(t,y) Lf (y) \, \di y + \int_{\partial\Omega_a} f(y) \langle AD\varphi_a(y),\nu \rangle \, \di \mathcal H^{m-1}(y)
	$$
	where $\nu = -D\varphi_a/|D\varphi_a|$ is the outward pointing normal on $\partial\Omega_a$. Noting that $f \geq 0$, that $\varphi_a$ is non-increasing in the direction of $\nu$ and that $A$ is positive definite, we see that $f\langle AD\varphi_a,\nu\rangle \leq 0$ on $\partial\Omega_a$ and therefore the second integral is non-positive, which implies the desired inequality \eqref{eq-negat}.
\end{proof}

\begin{proposition}\label{prop_allaLi}
	Let $(M^m,\metric)$ be a complete, noncompact manifold with $\Ricc\geq0$ and let $A,L$ be as in \eqref{L_def}-\eqref{A_cond}, with $A$ self-adjoint and smooth. If $f\in C^2(M) \cap L^\infty(M)$ satisfies $Lf \leq 0$ on $M$, then for any $x\in M$
	\begin{align}
		\label{mean_value_f}
		& \lim_{R\to \infty} \frac{1}{|B_R(x)|} \int_{B_R(x)} f(y) \, \di y = \inf_M f \, .
	\end{align}
\end{proposition}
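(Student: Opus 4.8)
The plan is to study the heat flow generated by $f$, namely the function $u(t,x)=\int_M f(y)H_L(x,y,t)\,\di y$ of Lemma \ref{lem_partialtu}, and to recover the mean value of $f$ from the large--time behaviour of $u$. By Lemma \ref{lem_stoch}, both $u$ and $y\mapsto |B_R(x)|^{-1}\int_{B_R(x)}f$ change only by an additive constant if $f$ does, so I would first normalize $\inf_M f=0$, hence $f\ge 0$; it then suffices to prove that the limit in \eqref{mean_value_f} equals $0$. Since $f\ge 0$, Lemma \ref{lem_stoch} gives $u(t,x)\ge 0$ for all $(t,x)$, while Lemma \ref{lem_partialtu} gives that $t\mapsto u(t,x)$ is non-increasing; therefore $\ell(x)\doteq\lim_{t\to\infty}u(t,x)$ exists and $0\le\ell\le\|f\|_{L^\infty(M)}$.

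The first step is to show $\ell\equiv 0$. Starting from the semigroup identity $u(t+s,x)=\int_M H_L(x,y,s)u(t,y)\,\di y$ and letting $t\to\infty$, dominated convergence (with dominating function $y\mapsto H_L(x,y,s)u(1,y)$, integrable since $u(1+s,x)<\infty$) yields $\ell(x)=\int_M H_L(x,y,s)\ell(y)\,\di y$ for every $s>0$. Thus $\ell$ is a bounded fixed point of the heat semigroup, hence a bounded $L$--harmonic function (smooth by parabolic regularity, $A$ being smooth), and therefore constant, say $\ell\equiv c\ge 0$, by the Liouville property for uniformly elliptic operators on manifolds with $\Ric\ge 0$ — a consequence of Saloff-Coste's parabolic Harnack inequality, cf. \cite{sal92}. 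On the other hand, since $f$ is bounded and continuous, the Gaussian bounds \eqref{H_Gbound} together with $\int_M H_L(x,\cdot,t)\,\di y=1$ imply, by a standard concentration argument, that $u(t,x)\to f(x)$ as $t\to 0^+$; as $u(t,x)\ge c$ for all $t>0$ by monotonicity, this forces $f(x)\ge c$ for every $x$, whence $0=\inf_M f\ge c$ and $c=0$. Consequently $u(t,x)\to 0$ as $t\to\infty$ for every $x$.

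It then remains to compare $|B_R(x)|^{-1}\int_{B_R(x)}f$ with $u(R^2,x)$. Applying the lower bound in \eqref{H_Gbound} with $t=R^2$, restricting the integral to $B_R(x)$ (legitimate because $f\ge 0$), using $e^{-C_2 d(x,y)^2/R^2}\ge e^{-C_2}$ there, and bounding $\sqrt{|B_R(x)|\,|B_R(y)|}\le 2^{m/2}|B_R(x)|$ for $y\in B_R(x)$ via Lemma \ref{lem_balls}, one obtains
\[
u(R^2,x)\ \ge\ \frac{C_1 e^{-C_2}}{2^{m/2}}\,\frac{1}{|B_R(x)|}\int_{B_R(x)}f(y)\,\di y .
\]
Since $f\ge 0$, this gives $0\le |B_R(x)|^{-1}\int_{B_R(x)}f\le (2^{m/2}e^{C_2}/C_1)\,u(R^2,x)\to 0$ as $R\to\infty$, which is precisely \eqref{mean_value_f}.

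I expect the main obstacle to be the identity $c=0$ in the second step. It rests on two facts that, in the present generality (with $A$ merely bounded, measurable and self-adjoint), are not automatic: the Liouville theorem for $L$ — available because $\Ric\ge 0$ supplies volume doubling and a scale-invariant Poincaré inequality, hence the Harnack inequality of \cite{sal92} — and the continuity $u(t,\cdot)\to f$ as $t\to 0^+$, which must be extracted from the Gaussian bounds and stochastic completeness rather than from a gradient (Li--Yau type) estimate, the latter being unavailable here. The remaining steps are routine manipulations of the heat kernel estimates \eqref{H_Gbound} and of Lemma \ref{lem_partialtu}.
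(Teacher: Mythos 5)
Your proof is correct and follows essentially the same route as the paper: the heat flow $u(t,x)=\int_M H_L(x,y,t)f(y)\,\di y$, monotonicity in $t$ from Lemma \ref{lem_partialtu}, the Liouville property of \cite{sal92} to show the large-time limit is the constant $\inf_M f$, and the lower Gaussian bound in \eqref{H_Gbound} (with $t=R^2$ and Lemma \ref{lem_balls}) to dominate the ball average of $f$ by $u(R^2,x)$, which is exactly the paper's final computation. The only inessential differences are that the paper obtains constancy of the limit from locally uniform convergence and $Lu_\infty=0$ rather than your semigroup fixed-point identity, and identifies $u_\infty=\inf_M f$ via the bound $u(t,x)\le f(x)$ coming from the maximum principle instead of your small-time trace argument.
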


\begin{proof}
	Without loss of generality, we assume $\inf_M f = 0$. Let $u : \RR^+\times M \to \RR$ be the function defined by \eqref{u_def}. Note that $u$ is the minimal solution to  the parabolic equation $\partial_t u = Lu$ on $\RR^+\times M$ corresponding to the initial datum $u(0^+,\,\cdot\,) = f$. Hence, by the maximum principle and the monotonicity \eqref{li_u_mon} we have
	$$
		\inf_M f \leq u(t,x) \leq f(x) \qquad \forall \, (t,x) \in \RR^+\times M \, .
	$$
	In particular, the limit
	$$
		u_\infty(x) = \lim_{t\to\infty} u(t,x)
	$$
	is well defined for every $x\in M$. The convergence $u(t,\,\cdot\,) \to u_\infty$ is uniform on compact subsets, $u_\infty$ is bounded and $Lu_\infty = 0$. Since $M$ is complete and has nonnegative Ricci curvature, the operator $L$ enjoys a Liouville property, see Theorem 7.4 of \cite{sal92}. In particular, $u_\infty$ must be constant. Since $\inf_M f \leq u_\infty \leq f$, it must be $u_\infty\equiv\inf_M f = 0$, that is,
	\begin{equation} \label{li_u_lim}
		\lim_{t\to\infty} u(t,x) = 0 \qquad \forall \, x \in M \, .
	\end{equation}
	To conclude the proof of \eqref{mean_value_f}, we observe that
	\begin{align*}
		u(t,x) & = \int_M H_L(x,y,t) f(y) \, \di y \\
		& \geq \frac{C_1}{|B_{\sqrt{t}}(x)|} \int_M \left(1 + \frac{\dist(x,y)}{\sqrt{t}}\right)^{-m/2} \exp\left(-C_2\dfrac{\dist(x,y)^2}{t}\right) f(y) \, \di y \\
		& = \frac{C_1}{|B_{\sqrt{t}}(x)|} \int_0^{\infty} \left(1 + \frac{r}{\sqrt{t}}\right)^{-m/2} \exp\left(-C_2\dfrac{r^2}{t}\right) \int_{\partial B_r(x)} f(y) \, \di \mathcal H^{m-1}(y) \, \di r \\
		& \geq \frac{C_1}{|B_{\sqrt{t}}(x)|} \int_0^{\sqrt{t}} \left(1 + \frac{r}{\sqrt{t}}\right)^{-m/2} \exp\left(-C_2\dfrac{r^2}{t}\right) \int_{\partial B_r(x)} f(y) \, \di \mathcal H^{m-1}(y) \, \di r \\
		& \geq \frac{2^{-m/2} e^{-C_2} C_1}{|B_{\sqrt{t}}(x)|} \int_0^{\sqrt{t}}  \int_{\partial B_r(x)} f(y) \, \di \mathcal H^{m-1}(y) \, \di r \\
		& = \frac{2^{-m/2} e^{-C_2} C_1}{|B_{\sqrt{t}}(x)|} \int_{B_{\sqrt{t}}(x)} f(y) \, \di y \, .
	\end{align*}
	Since $f\geq 0$, by comparison we have
	$$
		\lim_{t\to\infty} \frac{1}{|B_{\sqrt{t}}(x)|}\int_{B_{\sqrt{t}}(x)} f(y) \, \di y = 0 = \inf_M f
	$$
	as desired.
\end{proof}

From the above result, we also obtain information on the spherical mean of $u$. This follows from the next variant of de L'H\^opital's Theorem.

\begin{lemma}
	Let $h,g \in L^\infty_\loc(\RR^+)$ satisfy $h\geq 0$, $g>0$ a.e. and $g\not\in L^1(\infty)$. Then, 
	\begin{equation}\label{eq_Hop}
	\mathrm{ess} \liminf_{r \to \infty} \frac{h(r)}{g(r)} \le \liminf_{r\to \infty} \frac{\displaystyle\int_0^r h(t) \, \di t}{\displaystyle\int_0^r g(t) \, \di t}.
	\end{equation}
\end{lemma}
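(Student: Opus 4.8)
The plan is to reduce \eqref{eq_Hop} to an elementary comparison of integrals, after disposing of the degenerate cases. Set $\ell \doteq \mathrm{ess}\liminf_{r\to\infty} h(r)/g(r)$. Since $h\ge 0$ and $g>0$ a.e., we have $h/g\ge 0$ a.e., hence $\ell\ge 0$; moreover all the integrals appearing in \eqref{eq_Hop} are finite because $h,g\in L^\infty_\loc(\RR^+)$, and $\int_0^r g>0$ for every $r>0$. Consequently the right-hand side of \eqref{eq_Hop} is a $\liminf$ of non-negative quantities, so the inequality is trivial when $\ell=0$. I would therefore assume $\ell>0$, allowing $\ell=+\infty$, and fix an arbitrary $\lambda\in(0,\ell)$.

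Using the identity $\mathrm{ess}\liminf_{r\to\infty} f = \sup_{R>0}\mathrm{ess}\inf_{r\ge R} f$, from $\lambda<\ell$ we obtain some $r_0>0$ such that $h(t)/g(t)>\lambda$, hence $h(t)\ge\lambda\, g(t)$, for almost every $t\ge r_0$. Integrating over $[r_0,r]$ and discarding the non-negative term $\int_0^{r_0}h$, for every $r>r_0$ one gets
\[
\int_0^r h(t)\,\di t \ \ge\ \lambda\int_{r_0}^r g(t)\,\di t \ =\ \lambda\Big(\int_0^r g(t)\,\di t-\int_0^{r_0}g(t)\,\di t\Big),
\]
and therefore, dividing by $\int_0^r g(t)\,\di t>0$,
\[
\frac{\displaystyle\int_0^r h(t)\,\di t}{\displaystyle\int_0^r g(t)\,\di t}\ \ge\ \lambda\Big(1-\frac{\displaystyle\int_0^{r_0}g(t)\,\di t}{\displaystyle\int_0^r g(t)\,\di t}\Big).
\]

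Since $g\notin L^1(\infty)$, the denominator $\int_0^r g(t)\,\di t$ diverges as $r\to\infty$, so the factor in brackets tends to $1$; taking $\liminf_{r\to\infty}$ of both sides then shows that the liminf on the right-hand side of \eqref{eq_Hop} is $\ge\lambda$, and letting $\lambda\uparrow\ell$ gives the claim. I do not expect any genuine obstacle here: the only mildly delicate point is the passage from the essential $\liminf$ at infinity to the pointwise a.e. inequality $h\ge\lambda g$ on a half-line, which is exactly the displayed identity for $\mathrm{ess}\liminf$, while the cases $\ell=0$ and $\ell=+\infty$ are absorbed in the argument above.
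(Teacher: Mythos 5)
Your proof is correct and follows essentially the same route as the paper's: fix a level $\lambda$ (the paper's $A'$) below the essential liminf, use $h\ge \lambda g$ a.e. on a half-line $[r_0,\infty)$, split the integrals at $r_0$, and let $g\notin L^1(\infty)$ kill the contribution of the initial segment before sending $\lambda$ up to the essential liminf. The only difference is cosmetic: you discard $\int_0^{r_0}h\ge 0$ and divide out $\int_0^r g$, whereas the paper keeps that term and passes to a sequence realizing the right-hand liminf.
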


\begin{proof}
Denote by $A$ and $B$, respectively, the left-hand side and right-hand side of \eqref{eq_Hop}. For $A' < A$, fix $R_0$ such that $h \ge A' g$ a.e. on $(R_0,\infty)$. Then, for each $r > R_0$,  
	$$
	\frac{\displaystyle\int_0^r h(t) \, \di t}{\displaystyle\int_0^r g(t) \, \di t} = \frac{\displaystyle\int_0^{R_0} h(t) \, \di t + \int_{R_0}^r h(t) \, \di t}{\displaystyle\int_0^{R_0} g(t) \, \di t + \int_{R_0}^r g(t) \, \di t} \geq \frac{\displaystyle\int_0^{R_0} h(t) \, \di t + A' \int_{R_0}^r g(t) \, \di t}{\displaystyle\int_0^{R_0} g(t) \, \di t + \int_{R_0}^r g(t) \, \di t} \, .
	$$
Since $g\not\in L^1(\infty)$, letting $r\to \infty$ along a sequence realizing $B$ we get $B \ge A'$, and the thesis follows by letting $A' \uparrow A$. 
\end{proof}

\begin{corollary} \label{cor_mean_value}
	Let $(M^m,\metric)$ be a complete (connected) Riemannian manifold with infinite volume. Let $0 \leq f\in L^1_\loc(M)$ and $x\in M$ and assume that
	$$
	\liminf_{R\to \infty} \frac{1}{|B_R(x)|} \int_{B_R(x)} f(y) \, \di y = \inf_M f \, .
	$$
	Then
	$$
	\mathrm{ess}\liminf_{R\to \infty} \frac{1}{|\partial B_R(x)|} \int_{\partial B_R(x)} f(y) \, \di \mathcal H^{m-1}(y) \, \di y = \inf_M f \, .
	$$
\end{corollary}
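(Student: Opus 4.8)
The plan is to deduce the Corollary from the variant of de L'H\^opital's theorem just established, together with the coarea formula. Set $\rho = \dist(x,\cdot)$ and define, for $R > 0$,
$$
h(R) = \int_{\partial B_R(x)} f \, \di\mathcal H^{m-1}, \qquad g(R) = |\partial B_R(x)|.
$$
Because $|\nabla\rho| = 1$ almost everywhere on $M$, the coarea formula gives $\int_0^R h = \int_{B_R(x)} f\,\di y$ and $\int_0^R g = |B_R(x)|$ for every $R>0$; since $f\in L^1_\loc(M)$ and geodesic balls have finite volume, this shows $h,g \in L^1_\loc(\RR^+)$. Furthermore $h\ge 0$ because $f\ge 0$; $g\notin L^1(\infty)$ because $\int_0^\infty g = |M| = \infty$; and $g>0$ on all of $\RR^+$, because $M$ is noncompact (having infinite volume) and hence, by Hopf--Rinow, carries a ray $\gamma:[0,\infty)\to M$ issuing from $x$, each point $\gamma(R)$ lying at distance exactly $R$ from $x$ and outside $\{x\}\cup\cut(x)$, so that $\rho$ is smooth with unit gradient near $\gamma(R)$ and $\partial B_R(x)$ contains a non-empty smooth hypersurface of positive $\mathcal H^{m-1}$-measure. (The lemma is stated for $h,g\in L^\infty_\loc(\RR^+)$, but its proof uses only local integrability together with $g\notin L^1(\infty)$, which is all we have.)

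With this in hand, the lemma and the hypothesis of the Corollary give
$$
\mathrm{ess}\liminf_{R\to\infty}\frac{h(R)}{g(R)} \;\le\; \liminf_{R\to\infty}\frac{\int_0^R h}{\int_0^R g} \;=\; \liminf_{R\to\infty}\frac{1}{|B_R(x)|}\int_{B_R(x)} f\,\di y \;=\; \inf_M f .
$$
For the opposite inequality I would note that $f\ge\inf_M f$ almost everywhere on $M$, so that the set $Z=\{f<\inf_M f\}$ is Lebesgue-null; applying the coarea formula to $Z$ yields $\mathcal H^{m-1}(Z\cap\partial B_R(x))=0$ for almost every $R$, and for such $R$ one has $h(R)/g(R)=|\partial B_R(x)|^{-1}\int_{\partial B_R(x)}f\,\di\mathcal H^{m-1}\ge\inf_M f$. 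Hence $\mathrm{ess}\liminf_{R\to\infty}h(R)/g(R)\ge\inf_M f$, and comparing the two bounds proves the Corollary.

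The argument is essentially bookkeeping, and I foresee no genuine obstacle; the only points demanding some care are that the trivial lower bound on the spherical averages holds only for \emph{almost every} radius — which is why the coarea formula must be invoked once more, applied to the null set $Z$, to pass between the a.e.\ and everywhere statements — and the positivity of $g$, needed so that $h/g$ is defined a.e., which rests on the existence of a ray.
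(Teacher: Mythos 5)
Your proof is correct and follows essentially the same route as the paper: define $h(R)=\int_{\partial B_R(x)}f\,\di\mathcal H^{m-1}$ and $g(R)=|\partial B_R(x)|$, apply the de L'H\^opital-type lemma to bound the essential liminf of spherical averages from above by the liminf of ball averages, and combine with the trivial lower bound by $\inf_M f$. The only difference is that you spell out details the paper leaves implicit (positivity of $g$ via a ray, the a.e.\ lower bound via the coarea formula, and the remark that the lemma only needs local integrability), all of which are handled correctly.
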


\begin{proof}
	The functions $h$ and $g$ defined by
	$$
	h(t) = \int_{\partial B_t(x)} f(y) \, \di \mathcal H^{m-1}(y) \quad \text{and} \quad g(t) = |\partial B_t(x)| \, \qquad \forall \, t > 0
	$$
	satisfy the assumptions of the previous Lemma (note that $1/g \in L^\infty_\loc(\R^+)$ by \cite[Prop. 1.6]{bmr_mem}, since $M$ is non-compact). The thesis follows from the next chain of inequalities:
	\[
	\begin{array}{lcl}
	\disp \inf_M f & \le & \disp \mathrm{ess}\liminf_{R\to \infty} \frac{1}{|\partial B_R(x)|} \int_{\partial B_R(x)} f(y) \, \di \mathcal H^{m-1}(y) \, \di y \\[0.4cm]
	& \le & \disp \liminf_{R \to \infty} \frac{1}{|B_R(x)|} \int_{B_R(x)} f(y) \, \di y \le \inf_M f.
	\end{array}
	\]
\end{proof}

We are ready to state our second main result of the section, which will enable us to prove the Hessian estimate \eqref{Hess_v}. The argument below seems to be new.

\begin{theorem}\label{teo_mvLf}
	Let $(M^m,\metric)$ be a complete manifold with $\Ricc\geq0$ and let $A,L$ be as in \eqref{L_def}-\eqref{A_cond}, with $A$ self-adjoint and smooth. If $f\in L^\infty(M)$ satisfies $Lf \leq 0$ on $M$, then for any $x\in M$
	\begin{align}
		\label{mean_value_Lf}
		& \lim_{R\to \infty} \frac{R^2}{|B_R(x)|} \int_{B_R(x)} Lf(y) \, \di y = 0 \, .
	\end{align}
\end{theorem}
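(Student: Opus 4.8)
The plan is to run a parabolic argument based on the heat semigroup of $L$, in the spirit of Lemma~\ref{lem_partialtu} and Proposition~\ref{prop_allaLi}. If $M$ is compact the statement is trivial, since then $B_R(x)=M$ for $R$ large and $\int_M Lf=\int_M\div(AD f)=0$; so assume $M$ noncompact. Arguing as in Lemma~\ref{lem_partialtu}, and since $L(f+c)=Lf$, we may normalize $\inf_M f=0$, so that $f\geq 0$ and
\[
\Phi(r)\doteq -\int_{B_r(x)}Lf(y)\,\di y\;\geq\; 0
\]
is a non-decreasing function of $r$; the assertion \eqref{mean_value_Lf} is then equivalent to $R^2\Phi(R)/|B_R(x)|\to 0$ as $R\to\infty$. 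Put $u(t,x)=\int_M f(y)H_L(x,y,t)\,\di y$. By Lemma~\ref{lem_partialtu} we have $\partial_t u\leq 0$ on $\RR^+\times M$, and the proof of Proposition~\ref{prop_allaLi} (see \eqref{li_u_lim}) yields $u(t,x)\to\inf_M f=0$ as $t\to\infty$.

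The first and main step is the differential inequality
\[
-\partial_t u(t,x)\;\geq\; c_0\,\frac{\Phi(\sqrt t)}{|B_{\sqrt t}(x)|}\qquad\text{for every }t>0,
\]
for a constant $c_0=c_0(m,\alpha)>0$. I would obtain it by revisiting the computation in the proof of Lemma~\ref{lem_partialtu}: for a.e.\ regular value $a$ of $H_L(x,\cdot,t)$ one has, via \eqref{li_pp0}--\eqref{eq-negat}, the bound $-\partial_t u(t,x)\geq\int_{\Omega_a}\varphi_a(-Lf)$, and since $-Lf\geq 0$ while $\varphi_a\uparrow H_L(x,\cdot,t)$ on $\Omega_a\uparrow M$ as $a\downarrow 0$, monotone convergence gives $-\partial_t u(t,x)\geq\int_M H_L(x,y,t)\,(-Lf(y))\,\di y$. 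Restricting this integral to $B_{\sqrt t}(x)$ and using the Gaussian lower bound in \eqref{H_Gbound} together with Lemma~\ref{lem_balls} (with radius $\sqrt t$, so $\dist(x,y)\leq\sqrt t$ there) to get $H_L(x,y,t)\geq c_0/|B_{\sqrt t}(x)|$ on $B_{\sqrt t}(x)$, the inequality follows. In passing this shows $\Phi(r)<\infty$ for all $r$, since $-\partial_t u(t,x)$ is finite by \eqref{dtH_Gbound}.

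The second step is a telescoping argument exploiting $\partial_t u\leq 0$ and $u(t,x)\to 0$. Integrating the inequality of the previous step over $t\in[R^2,2R^2]$, using $\Phi(\sqrt t)\geq\Phi(R)$ for $t\geq R^2$ and $|B_{\sqrt t}(x)|\leq|B_{\sqrt 2\,R}(x)|\leq 2^{m/2}|B_R(x)|$ by Bishop--Gromov, we find
\[
u(R^2,x)-u(2R^2,x)\;=\;-\int_{R^2}^{2R^2}\partial_t u(t,x)\,\di t\;\geq\; c_1\,\frac{R^2\Phi(R)}{|B_R(x)|},\qquad c_1=c_1(m,\alpha)>0.
\]
Letting $R\to\infty$, the left-hand side tends to $0$ because both $u(R^2,x)$ and $u(2R^2,x)$ converge to $\inf_M f=0$; hence $R^2\Phi(R)/|B_R(x)|\to 0$, which is precisely \eqref{mean_value_Lf}.

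I expect the main obstacle to be the first step: passing from the finite-level identity of Lemma~\ref{lem_partialtu} — whose boundary term has the favourable sign exactly because $f\geq 0$ and $A>0$ — to a lower bound on $-\partial_t u$ that can be localized on a geodesic ball through the heat-kernel lower bound. A second, more conceptual subtlety is that one cannot simply compare $f(x)$ with $u(t,x)$: that only bounds $\limsup_{R\to\infty}R^2\Phi(R)/|B_R(x)|$ by a multiple of $f(x)$, not $0$. Genuine decay requires the \emph{increment} $u(R^2,x)-u(2R^2,x)$ between two diverging times, and this is where the parabolic Liouville property behind Proposition~\ref{prop_allaLi} (i.e.\ \eqref{li_u_lim}) becomes indispensable.
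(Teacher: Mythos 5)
Your argument is correct, and it takes a genuinely different route to the key estimate than the paper, while staying inside the same heat-equation framework built on Lemma \ref{lem_partialtu}. The paper works at a \emph{fixed} time $t$: it calibrates the level $a$ to be comparable to $|B_{\sqrt t}(x)|^{-1}$ and chooses $k$ so that $B_{\sqrt t}(x)\subseteq\Omega_a\subseteq B_{k\sqrt t}(x)$ with $\varphi_a\ge a$ on $B_{\sqrt t}(x)$, bounds $\int_{\Omega_a}f\,\partial_t H_L$ from below through the pointwise bound \eqref{dtH_Gbound} on $\partial_t H_L$, and thereby controls $\tfrac{t}{|B_{\sqrt t}(x)|}\int_{B_{\sqrt t}(x)}Lf$ directly by the average of $f$ over $B_{k\sqrt t}(x)$, which tends to $\inf_M f$ by the \emph{conclusion} of Proposition \ref{prop_allaLi}. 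You instead let $a\to0^+$ in \eqref{eq-negat} to obtain $-\partial_t u(t,x)\ge\int_M H_L(x,y,t)\,(-Lf(y))\,\di y$, localize using the \emph{lower} Gaussian bound in \eqref{H_Gbound} together with Lemma \ref{lem_balls} (rather than the $\partial_t H_L$ bound), and then integrate the resulting differential inequality over $t\in[R^2,2R^2]$, concluding from monotonicity and the long-time limit \eqref{li_u_lim}, i.e.\ the parabolic Liouville step \emph{inside} the proof of Proposition \ref{prop_allaLi}, rather than from its mean-value statement. What each buys: the paper's calibrated level set yields a quantitative fixed-time inequality, $0\ge\tfrac{t}{|B_{\sqrt t}(x)|}\int_{B_{\sqrt t}(x)}Lf\ge-\tfrac{Ck^m}{|B_{k\sqrt t}(x)|}\int_{B_{k\sqrt t}(x)}f$, controlling the left side by a local average of $f$; your version dispenses with the choice of $a$ and the $k$-sandwich altogether, using \eqref{dtH_Gbound} only to guarantee finiteness and differentiation under the integral, at the (routine) cost of checking that $u(\cdot,x)$ is $C^1$ in $t$ so the fundamental theorem of calculus applies on $[R^2,2R^2]$. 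One small wording point: the intermediate claim that $-\partial_t u(t,x)\ge\int_{\Omega_a}\varphi_a(-Lf)$ holds "for a.e.\ regular value $a$" does not follow from \eqref{eq-negat} at fixed $a$ (the quantity appearing there is $-\int_{\Omega_a}f\,\partial_t H_L$, not $-\partial_t u$); it is only after passing to the limit $a\to0^+$, combining \eqref{li_pp0} with monotone convergence, that one gets the inequality you actually use — which is exactly how you then phrase and exploit it, so this is a matter of presentation, not a gap.
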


\begin{proof}
	Without loss of generality, we assume $\inf_M f = 0$. Fix $x \in M$. We refer to the proof of Lemma \ref{lem_partialtu} for notation, and in particular, for $t>0$ and $a>0$ we define $\varphi_a(y)$ and $\Omega_a$ as in \eqref{def_varphia}. As already observed, $\{\Omega_a\}$ is an exhaustion of $M$, increasing as $a$ decreases. Furthermore, for almost every $a>0$ the boundary $\partial \Omega_a$ is smooth. From the proof of Lemma \ref{lem_partialtu} we get
	\begin{equation}\label{eq_derHeatL}
	\int_{\Omega_a}	f(y) \partial_t H_L(x,y,t) \, \di y \le \int_{\Omega_a} \varphi_a(y) Lf (y) \, \di y.
	\end{equation}	
On the other hand, since $f\geq0$, by \eqref{dtH_Gbound} and Lemma \ref{lem_balls} we can estimate
	\begin{align*}
		\int_{\Omega_a} f(y) & \partial_t H_L(x,y,t) \, \di y \\
		& \geq - \frac{C_5}{t}\frac{1}{|B_{\sqrt{t}}(x)|} \int_{\Omega_a} f(y) \left(1 + \frac{\dist(x,y)}{\sqrt{t}}\right)^{m/2} \exp\left(-C_6\frac{\dist(x,y)^2}{t}\right) \, \di y \, .
	\end{align*}
	By \eqref{H_Gbound} and Lemma \ref{lem_balls} we also have the bounds
	\begin{align*}
		\frac{C_1}{|B_{\sqrt{t}}(x)|} & \left(1+\frac{\dist(x,y)}{\sqrt{t}}\right)^{-m/2} \exp\left(-C_2\frac{\dist(x,y)^2}{t}\right) \\
		& \leq H_L(x,y,t) \leq \frac{C_3}{|B_{\sqrt{t}}(x)|} \left(1+\frac{\dist(x,y)}{\sqrt{t}}\right)^{m/2} \exp\left(-C_4\frac{\dist(x,y)^2}{t}\right) \, .
	\end{align*}
	Now, fix $k>1$ large enough so that
	$$
		C_3 (1+s)^{m/2} e^{-C_4 s^2} \leq \frac{1}{2} C_1 2^{-m/2} e^{-C_2} \qquad \forall s \geq k
	$$
	and pick
	$$
		a = \frac{C_1 2^{-m/2} e^{-C_2}}{2|B_{\sqrt{t}}(x)|} \, .
	$$
	With this choice, we have
	$$
		\begin{cases}
			\varphi \leq a & \text{on } \, M \setminus B_{k\sqrt{t}}(x) \, , \\
			\varphi \geq 2a & \text{on } \, B_{\sqrt{t}}(x) \, ,
		\end{cases}
	$$
	hence $B_{\sqrt{t}}(x) \subseteq \Omega_a \subseteq B_{k\sqrt{t}}(x)$ and $\varphi_a \geq a$ on $B_{\sqrt{t}}(x)$. Thus, using also \eqref{eq_derHeatL} we can estimate
	\begin{align*}
		0 & \geq \frac{C_1 2^{-m/2} e^{-C_2}}{2|B_{\sqrt{t}}(x)|} \int_{B_{\sqrt{t}}(x)} Lf(y) \, \di y = a \int_{B_{\sqrt{t}}(x)} Lf(y) \, \di y \\
		& \geq \int_{B_{\sqrt{t}}(x)} \varphi_a(y)Lf(y) \, \di y \geq \int_{\Omega_a} \varphi_a(y)Lf(y) \, \di y \\
		& \geq \int_{\Omega_a} f(y)\partial_t H_L(x,y,t) \, \di y \\
		& \geq - \frac{C_5}{t|B_{\sqrt{t}}(x)|} \int_{\Omega_a} f(y) \left(1 + \frac{\dist(x,y)}{\sqrt{t}}\right)^{m/2} \exp\left(-C_6\frac{\dist(x,y)^2}{t}\right) \, \di y \\
		& \geq - \frac{C_7}{t|B_{\sqrt{t}}(x)|} \int_{B_{k\sqrt{t}}(x)} f(y) \, \di y
	\end{align*}
	where
	$$
		C_7 = C_5 \sup \left\{ (1+s)^{m/2} e^{-C_6 s^2} : s > 0 \right\} < \infty \, .
	$$
	Summing up, there exists a constant $C>0$, depending only on $C_i$, $1\leq i\leq 7$, such that
	$$
		0 \geq \frac{t}{|B_{\sqrt{t}}(x)|} \int_{B_{\sqrt{t}}(x)} Lf(y) \, \di y \geq - \frac{C}{|B_{\sqrt{t}}(x)|} \int_{B_{k\sqrt{t}}(x)} f(y) \, \di y \, .
	$$
	Since $f\geq0$, by Bishop-Gromov theorem we also have
	$$
		0 \geq \frac{t}{|B_{\sqrt{t}}(x)|} \int_{B_{\sqrt{t}}(x)} Lf(y) \, \di y \geq - \frac{Ck^m}{|B_{k\sqrt{t}}(x)|} \int_{B_{k\sqrt{t}}(x)} f(y) \, \di y \, .
	$$
	By Proposition \ref{prop_allaLi} we have that the RHS of this inequality converges to $\inf_M f = 0$ as $t\to\infty$, and the conclusion follows.
\end{proof}

\section{Proof of Theorem \ref{teo_main}, $(ii)$}

Combining Corollary \ref{cor_boundedgrad}, Proposition \ref{prop_allaLi} and Theorem \ref{teo_mvLf}, we get

\begin{proposition}\label{prop_asi_grad_hess}
	Let $(M^m,\sigma)$ be a complete Riemannian manifold with $\Ricc \geq 0$ and 
	\[
	\Ricc^{(\ell)}(\nabla r) \ge - \frac{\bar \kappa^2}{1+r^2}, \qquad \ell = \max\{1,m-2\},
	\]
for some $\bar \kappa \in \R^+_0$ and where $r$ is the distance from a fixed origin. Let $u \in C^\infty(M)$ be a non-constant solution to  \eqref{P} which grows at most linearly on one side. Then, for each $x \in M$,
	\begin{align}
		\label{Du_L2}
		& \lim_{R\to\infty} \frac{1}{|B_R(x)|} \int_{B_R(x)} |Du|^2 \di x = \sup_M |Du|^2, \\
		\label{Hess_u_L2}
		& \lim_{R\to\infty} \frac{R^2}{|B_R(x)|} \int_{B_R(x)} |D^2 u|^2 \di x = 0.
	\end{align}
\end{proposition}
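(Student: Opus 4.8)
\noindent The plan is to recast \eqref{P} as $Lu=0$ for a uniformly elliptic operator $L$ of the type \eqref{L_def}--\eqref{A_cond}, and then apply Proposition~\ref{prop_allaLi} and Theorem~\ref{teo_mvLf} to two bounded $L$-subsolutions built from $W=\sqrt{1+|Du|^2}$. First I would use Corollary~\ref{cor_boundedgrad} to get $|Du|\in L^\infty(M)$, and set $W_0:=\sup_M W=\sqrt{1+\|Du\|_{L^\infty(M)}^2}<\infty$; since $u$ is non-constant, $M$ is noncompact. Define
\[
L\psi:=\div\big(A\,D\psi\big),\qquad A:=W\,\mathrm{Id}-W^{-1}\,\di u\otimes Du,
\]
so that $A\,X=W X-W^{-1}\langle Du,X\rangle Du$. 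Using $g^{ij}=\sigma^{ij}-W^{-2}u^iu^j$ together with \eqref{eq_Lapla}, one checks $L\psi=W\,\Delta_g\psi$ for all $\psi\in C^2(M)$. The endomorphism $A$ is self-adjoint, smooth (as $u\in C^\infty$, $W\ge1$), and has eigenvalues $W^{-1}$ in the direction of $Du$ and $W$ on $Du^{\perp}$, so it satisfies \eqref{A_cond} with $\alpha=W_0$. I expect this to be the crux: the operator coming from the minimal surface equation is uniformly elliptic with a \emph{position-independent} constant only because of the global gradient bound of Corollary~\ref{cor_boundedgrad} (which is precisely where the curvature hypothesis $\Ricc^{(m-2)}(\nabla r)\ge-\bar\kappa^2/(1+r^2)$ is used). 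Note also that $L$ is written through the $\sigma$-divergence, so all its integrals are taken with respect to the $\sigma$-volume $\di x$, matching the statements of Proposition~\ref{prop_allaLi} and Theorem~\ref{teo_mvLf}.

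\smallskip
\noindent For \eqref{Du_L2}, take $f:=W_0^2-W^2\in C^\infty(M)\cap L^\infty(M)$, which is non-negative with $\inf_M f=0$. From the Jacobi equation $\Delta_g(1/W)=-\big(\|\II\|^2+\overline{\Ricc}(\mathbf{n},\mathbf{n})\big)/W\le0$ (here $\overline{\Ricc}\ge0$ because $\Ricc\ge0$), writing $W=(1/W)^{-1}$ gives
\[
\Delta_g W=-W^2\,\Delta_g(1/W)+2W^3\,\|\nabla(1/W)\|^2\ \ge\ 0,\qquad \Delta_g W^2=2\|\nabla W\|^2+2W\,\Delta_g W\ \ge\ 0,
\]
hence $Lf=-W\,\Delta_g W^2\le0$. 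Proposition~\ref{prop_allaLi} then gives $|B_R(x)|^{-1}\int_{B_R(x)}f\,\di x\to\inf_M f=0$, i.e. $|B_R(x)|^{-1}\int_{B_R(x)}W^2\,\di x\to W_0^2=\sup_M W^2$; since $|Du|^2=W^2-1$ and $\sup_M|Du|^2=W_0^2-1$, this is exactly \eqref{Du_L2}.

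\smallskip
\noindent For \eqref{Hess_u_L2}, take instead $f:=1/W\in C^\infty(M)\cap L^\infty(M)$ (bounded since $1\le W\le W_0$). By the same Jacobi equation, $Lf=W\,\Delta_g(1/W)=-\big(\|\II\|^2+\overline{\Ricc}(\mathbf{n},\mathbf{n})\big)\le-\|\II\|^2\le0$. Diagonalizing $g$ with respect to $\sigma$ (eigenvalues $W^2$ along $Du$ and $1$ on $Du^{\perp}$) and using $\II_{ij}=u_{ij}/W$ yields $\|\II\|^2\ge W^{-6}|D^2u|^2\ge W_0^{-6}|D^2u|^2$, so $|D^2u|^2\le W_0^6\,(-Lf)$ pointwise on $M$. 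Theorem~\ref{teo_mvLf} gives $R^2|B_R(x)|^{-1}\int_{B_R(x)}Lf\,\di x\to0$, and since $Lf\le0$ this means $R^2|B_R(x)|^{-1}\int_{B_R(x)}(-Lf)\,\di x\to0$; therefore
\[
0\ \le\ \frac{R^2}{|B_R(x)|}\int_{B_R(x)}|D^2u|^2\,\di x\ \le\ W_0^6\,\frac{R^2}{|B_R(x)|}\int_{B_R(x)}(-Lf)\,\di x\ \longrightarrow\ 0\quad(R\to\infty),
\]
which is \eqref{Hess_u_L2}.

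\smallskip
\noindent Beyond the uniform ellipticity point flagged above (the only genuinely delicate step), the remaining work is routine bookkeeping: checking the identity $L=W\,\Delta_g$, the smoothness and boundedness of the two choices of $f$, and the elementary pointwise comparisons between the $g$- and $\sigma$-norms of $D^2u$ and $\II$.
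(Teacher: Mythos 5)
Your proposal is correct and follows essentially the same route as the paper: make $L\psi=W\Delta_g\psi=\div(A\,D\psi)$ uniformly elliptic via Corollary \ref{cor_boundedgrad}, apply Proposition \ref{prop_allaLi} to (a constant shift of) $-W^2$ to get \eqref{Du_L2}, and apply Theorem \ref{teo_mvLf} to $1/W$ together with the Jacobi equation and the pointwise bound $\|\II\|^2\ge W^{-6}|D^2u|^2$ to get \eqref{Hess_u_L2}. The only differences are cosmetic (normalizing $f=W_0^2-W^2$ instead of using $-W^2$ directly, and asserting rather than computing the $\II$ versus $D^2u$ comparison, which the paper verifies by diagonalizing along $Du$).
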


\begin{proof}
Because of Corollary \ref{cor_boundedgrad}, in our assumptions $|Du| \in L^\infty(M)$, hence by \eqref{eq_Lapla} the operator 
	\[
	L\phi \doteq W\Delta_g\phi = \diver \big( W g^{ij} \phi_i\partial_{x_j} \big)
	\]
is uniformly elliptic on $M$. By the Jacobi equation, $f = 1/W$ is a non-negative solution to  $L f \le - \|\II\|^2 \le 0$, and therefore $-W^2 \in L^\infty(M)$ satisfies $L (-W^2) \le 0$. Applying Proposition \ref{prop_allaLi} to $-W^2$ and Theorem \ref{teo_mvLf} to $f$ we deduce 	\begin{align}
	\label{W_mean}
	& \lim_{R\to\infty} \frac{1}{|B_R(x)|} \int_{B_R(x)} W^2 \di x = \sup_M W^2 \\
	\label{second_hess} & \limsup_{R\to\infty} \frac{R^2}{|B_R(x)|} \int_{B_R(x)} \|\II\|^2 \di x \le - \lim_{R\to\infty} \frac{R^2}{|B_R(x)|} \int_{B_R(x)} L f \di x = 0.
	\end{align}
From \eqref{W_mean} we readily deduce \eqref{Du_L2}. On the other hand, note that
	\[
	\begin{array}{lcl}
	\|\II\|^2 & = & W^{-2}g^{ik}u_{kj}g^{jl}u_{li} \\[0.2cm]
	& = & W^{-2} \left\{ |D^2 u|^2 - 2 \left|D^2 u\left(\frac{Du}{W},\cdot\right)\right|^2 + \left[D^2 u\left(	\frac{Du}{W},\frac{Du}{W}\right)\right]^2 \right\}
	\end{array}
	\]
If $\di u(x) = 0$, then $\|\II\|^2 \ge W^{-2}|D^2 u|^2$. Otherwise, let $e_1 = Du/|Du|$ and choose a local orthonormal frame $\{e_\alpha\}$ for $e_1^\perp$ around $x$, where $2 \le \alpha \le m$. Then,    
	\[
	\begin{array}{lcl}
|D^2 u|^2 - 2 \left|D^2 u\left(\frac{Du}{W},\cdot\right)\right|^2 + \left[D^2 u\left(	\frac{Du}{W},\frac{Du}{W}\right)\right]^2 \\[0.3cm]	
	\sum_{\alpha,\beta} u_{\alpha\beta}^2 + 2 \sum_\alpha u^2_{1\alpha} + u_{11}^2 - 2 \frac{W^2-1}{W^2}\sum_j u_{1j}^2 + \frac{(W^2-1)^2}{W^4} u_{11}^2 \\[0.3cm]
	= \sum_{\alpha,\beta} u_{\alpha\beta}^2 + \frac{2}{W^2} \sum_\alpha u^2_{1\alpha} + \frac{1}{W^4} u_{11}^2
	\ge W^{-4} |D^2 u|^2
	\end{array}
	\]
Summarizing, we have $\|\II\|^2 \ge W^{-6}|D^2 u|^2$, thus from the boundedness of $W$ and from \eqref{second_hess} we conclude \eqref{Hess_u_L2}.
\end{proof}

We now conclude the proof of Theorem \ref{teo_main} with a blow-down procedure, for which we use some basic convergence results in the theory of limit spaces and nonsmooth spaces with Ricci curvature bounded below. All the tools needed herein can be found in \cite{H,AH,AH_2}.\par
Fix $o \in M$, and write $B_R = B_R(o)$. Because of Corollary \ref{cor_boundedgrad} and Proposition \ref{prop_asi_grad_hess},
	\begin{align}
		\label{Du_L2_2}
		& \lim_{R\to\infty} \frac{1}{|B_R|} \int_{B_R} |Du|^2 \di x = \sup_M |Du|^2, \\
		\label{Hess_u_L2_2}
		& \lim_{R\to\infty} \frac{R^2}{|B_R|} \int_{B_R} |D^2 u|^2 \di x = 0.
	\end{align}
Consider a tangent cone at infinity $X_\infty$ for $M$ based at $o$. By statement (2.1) in \cite{dphg}, the limit space $X_\infty$ also supports a Borel measure $\mes_\infty$ such that, up to a subsequence,
	\begin{equation}\label{eq_conver}
	\left( M, \lambda_n^{-1} \dist_\sigma, \lambda_n^{-m} \di x, o\right)  \stackrel{\pmgh}{\longrightarrow}(M_\infty,\di_\infty,\mes_\infty,o_\infty)
	\end{equation}
in the pointed-measured-Gromov-Hausdorff (pmGH) sense. For the precise definition of pGH and pmGH convergence we refer to \cite{gms}.
Here, $\{\lambda_n\} \subset \R^+$, $\lambda_n \to \infty$ as $n \to \infty$, and $\lambda_n^{-1}\dist_\sigma$ is the distance function induced by the rescaled metric $\sigma_n \doteq \lambda_n^{-2}\sigma$. Denote with $D_n$ and $\di x_n$ the induced connection and volume measure, and $B_R^{n}$ the metric balls centered at $o$ in $(M,\sigma_n)$. Therefore, $B_R^{n} = B_{\lambda_n R}$. Define $u_n = u/\lambda_n$. Then,  
	\begin{equation}\label{eq_un}
	|D_n u_n|_{\sigma_n} = |Du|, \qquad |D^2_n u_n|_{\sigma_n} = \lambda_n |D^2 u|
	\end{equation}
and therefore, by Arzel\'a-Ascoli Theorem, up to subsequences $u_n \to u_\infty \in \lip(M_\infty)$ locally uniformly, hence $u_n \to u_\infty$ strongly in $L^2$ on $B_R^\infty = B_R^{\di_\infty}(x_\infty)$, that is, 
	\[
	\begin{array}{l}
	\disp \lim_{n \to \infty} \int_{B_R^n} |u_n|^2 \di x_n = \int_{B_R^\infty} |u_\infty|^2 \di \mes_\infty, \\[0.5cm]
	\disp \lim_{n \to \infty} \int_{B_R^n} u_n \varphi \di x_n = \int_{B_R^\infty} u_\infty \varphi \di \mes_\infty
	\end{array}	
	\]
for each $\varphi$ bounded and continuous on a metric space $Z$ in which $(B_R^n, \di_n)$ and $(B_R^\infty, \di_\infty)$ are isometrically embedded and converge in Hausdorff sense, with $o_n \to o_\infty$ and $o_n$ the center of $B_R^n$.  
From 
	\[
	W_n \doteq \sqrt{1+ |D_nu_n|^2_{\sigma_n}} = \sqrt{1+|Du|^2} = W
	\]	
Scaling \eqref{Du_L2_2} and \eqref{Hess_u_L2_2} we therefore get, for each fixed $R>0$   
	\begin{align}
		\label{Du_L2_n}
		& \lim_{n\to\infty} \frac{1}{|B^{n}_R|_{\sigma_n}} \int_{B^{n}_R} |D_nu_n|_{\sigma_n}^2 \di x_{n} = \sup_M |Du|^2, \\
		\label{Hess_u_L2_n}
		& \lim_{n\to\infty} \frac{R^2}{|B^{n}_R|_{\sigma_n}} \int_{B^{n}_R} |D^2_n u_n|_{\sigma_n}^2 \di x_{n} = 0.
	\end{align}
In particular, from Newton's inequality $|\Delta_n u_n|^2 \le m|D_n^2 u_n|^2_{\sigma_n}$ and Bishop-Gromov's Theorem, $|B_R^n|_{\sigma_n} \le \omega_{m-1} R^m/m$ we deduce
	\begin{equation}\label{eq_Deltaun2}
	\int_{B^{n}_R} |\Delta_n u_n|^2 \di x_{n} \le \frac{\omega_{m-1}R^m}{|B^{n}_R|_{\sigma_n}} \int_{B^{n}_R} |D^2_n u_n|_{\sigma_n}^2 \di x_{n} \to 0 
	\end{equation}
as $n \to \infty$, and therefore
	\begin{equation}\label{eq_deltaunw}
	\begin{array}{lcl}
	\disp \int_{B^{n}_R} \varphi \Delta_n u_n \di x_{n} & \le & \left( \int_{B_R^n} \varphi^2 \di x_n\right)^{\frac{1}{2}} \left( \int_{B^{n}_R} |\Delta_n u_n|^2 \di x_{n} \right)^{\frac{1}{2}} \\[0.4cm]
	& \le & \max|\varphi| \left[\frac{\omega_{m-1}R^m}{m}\right]^{\frac{1}{2}} \left( \int_{B^{n}_R} |\Delta_n u_n|^2 \di x_{n} \right)^{\frac{1}{2}} \to 0.
	\end{array}
	\end{equation}
By \eqref{eq_Deltaun2} and \eqref{eq_deltaunw}, $\Delta_n u_n \to 0$ strongly in $L^2$. Combining $u_n \to u_\infty$ strongly in $L^2$ with 
	\[
	\sup_n \left( \int_{B_R^n} \big[ |u_n|^2 + |D_nu_n|^2_{\sigma_n} + (\Delta_n u_n)^2 \big] \di x_n \right) < \infty
	\] 
we infer by \cite[Thm. 4.4]{AH} that
	\begin{itemize}
	\item[$(i)$] $u_\infty \in \mathcal{D}(\Delta, B_R^\infty)$, the domain of the Laplacian on $B_R^\infty$;
	\item[$(ii)$] $\Delta_n u_n \to \Delta u_\infty$ on $B_R^n$ weakly in $L^2$, so in particular $\Delta u_\infty = 0$;
	\item[$(iii)$] $|D_nu_n|^2_{\sigma_n} \to |D_\infty u_\infty|^2_\infty$ in $L^1$-strongly in $B_r^n$, for each $r< R$; 
	\end{itemize}
In particular, setting $P \doteq \sup_M |Du|^2$, from \eqref{eq_un} and \eqref{Du_L2_n} we get
	\[
	\lim_{n \to \infty} \int_{B_R^n} \big| |D_nu_n|^2_{\sigma_n} - P \big| \di x_n \le \frac{\omega_{m-1}R^m}{|B_R^n|_{\sigma_n}} \int_{B_R^n} \Big( P - |D_nu_n|^2_{\sigma_n} \Big) \di x_n = 0
	\]
Using $(iii)$ and \cite[Prop. 1.27 (i)]{bps} (cf. also \cite{AH_2}), we therefore deduce $|D_nu_n|^2_{\sigma_n} - P \to |D_\infty u_\infty|^2_\infty -P$ strongly in $L^1$ on $B_r^\infty$ for each $r< R$, and thus
	\[
	0 = \lim_{n \to \infty} \int_{B_r^n} \big| |D_nu_n|^2_{\sigma_n} - P \big| \di x_n = \int_{B_r^\infty} \big| |D_\infty u_\infty|^2_{\infty} - P \big| \di \mes_\infty. 	
	\]
Concluding, $u_\infty$ solves
	\[
	\Delta u_\infty = 0, \qquad |D_\infty u_\infty|^2 = P \neq 0
	\]
on the $\RCD(0,m)$ space $(M_\infty, \di_\infty,\mes_\infty, x_\infty)$.  Bochner inequality (see \cite[Thm. 1.4]{H}) guarantees that $|D^2 u_\infty| \equiv 0$ on $M_\infty$. One concludes that $M_\infty = N \times \R$ by using \cite[Lem. 1.21]{abs}.

\section{Proof of Theorem \ref{teo_slower}}

If $M$ is parabolic, clearly the result follows from Theorem \ref{teo_main}. If $M$ is non-parabolic, the argument goes as in \cite[Theorem 3.6]{dingjostxin}, so we only sketch the main steps. In our assumptions, by Corollary \ref{cor_boundedgrad}, $|Du| \in L^\infty(M)$, hence $L = W \Delta_g$ is uniformly elliptic. The Harnack inequality in \cite{sal92} together with \eqref{eq_slower} imply that $|u(x)| = o(r(x))$ as $x$ diverges. By a standard cutoff argument using $Lu = 0$, the next Caccioppoli inequality holds: for each $\varphi\in \lip_c(M)$
	\begin{equation}\label{eq_caccio}
		\int_M \varphi^2 |Du|^2\di x \leq 4 \alpha^2 \int_M u^2|D\varphi|^2\di x.
	\end{equation}
In particular, having fixed $\eps>0$, by condition $u = o(r)$ we can also fix $R_0 = R_0(\eps) > 0$ such that for every $R \geq R_0$ we have $u^2 \leq \eps R^2$ on $B_{2R}$. Considering the Lipschitz cutoff function $\varphi$ which is $1$ on $B_{R}$, $0$ outside of $B_{2R}$ and satisfies $|D\varphi| \le 1/R$, we get
	\[
	\int_{B_R} |Du|^2 \di x \le \frac{4\alpha^2}{R^2} \int_{B_{2R}\backslash B_R} u^2 \di x \le \eps |B_{2R}| \le C \eps |B_R|
	\]
for every $R\geq R_0$, where we used the doubling property on $M$ coming from condition $\Ricc \ge 0$. From \eqref{Du_L2} we finally infer
	\[
	\sup_M |Du|^2 = \lim_{R \to \infty} \frac{1}{|B_R|} \int_{B_R} |Du|^2 \di x \le C\eps, 
	\]
and the thesis follows by letting $\eps \to 0$.

\section{Proof of Corollary \ref{cor_main}}

By Theorem \ref{teo_main}, $|Du| \in L^\infty(M)$ and any tangent cone at infinity of $M$ splits off a line. It is a general fact that, if $\Sec \ge 0$, a tangent cone splits if and only if $M$ itself splits. A proof of this result can be found in \cite[Thm. 4.6]{abfp}. Therefore, it remains to prove that $u$ only depends on the coordinate of a split line. Write $M = N^{m-1} \times \R$ with coordinates $(y_1,s_1)$, for some complete manifold $N^{m-1}$ with $\Sec \ge 0$, and consider the function $v_1 = \sigma(Du, \partial_{s_1})$, which by \eqref{killing-Du} satisfies $L v_1 = 0$ on $M$, where we set 
	\[
	L\phi \doteq W^{-1} \LL_W\phi = \diver \big( W^{-1} g^{ij} \phi_i \partial_{x_j}\big). 
	\] 
Our gradient estimate guarantees that $v_1$ is bounded and that $L$ is uniformly elliptic on $M$, and therefore, by \cite[Theorem 7.4]{sal92} we deduce that $v_1$ is constant on $M$. Hence,  
	\[
	u(y_1,s_1) = a_1s_1 + b_1 + u_2(y_1) \sqrt{1+a_1^2},
	\]
for some smooth function $u_2 : N^{m-1} \to \R$ and some $a_1,b_1 \in \R$. One easily checks that $u_2$ solves \eqref{P} on $N^{m-1}$. Since $u_2$ has at most linear growth on one side, and $N^{m-1}$ has non-negative sectional curvature, by the first part of the proof we deduce that either $u_2$ is constant or that $N^{m-1} = N^{m-2} \times \R$ and $u_2(y_2,s_2) = a_2 s_2 + b_2 + u_3(y_2)\sqrt{1+a_2^2}$. Iterating, we can write $M = N^{m-k} \times \R^k$ for some $k \in \{1,\ldots,m-2\}$ and for some complete manifold $N^{m-k}$ with $\Sec \ge 0$, and 
	\[
	u(z,(s_1,\ldots,s_k)) = \sum_{j=1}^k a_j s_j + b + u_{k+1}(z)\sqrt{1+a_k^2}	
	\]
for some $a_i,b \in \R$ and $u_{k+1} : N^{m-k} \to \R$. Indeed, we can continue the iteration procedure up until either $u_{k+1}$ is constant, or $k = m-2$ and $u_{m-1}$ is non-constant. In the latter case, observe that $N^2$ is a complete surface with $\Sec \ge 0$, hence $N^2$ is parabolic. Being $u_{m-1}$ non-constant, both $N^2$ and $u_{m-1}$ split as indicated in Theorem \ref{teo_main}, $(i)$. Summarizing, in each case we can conclude that $M = N^{m-k} \times \R^k$ for some $k \in \{1,\ldots, m-1\}$, and that 
	 \begin{equation}\label{eq_splitv}
	u(z,(s_1,\ldots,s_k)) = \sum_{j=1}^k a_j s_j + b	
	\end{equation}	 
for some $b \in \R$, as required. It is therefore sufficient to consider the splitting $\R^k = \R^{k-1} \times \R$ along a line in direction $(a_1,\ldots, a_k)$ to get the desired splitting $M = N \times \R$ of $M$ in such a way that $u(y,s) = as + b$. \par

\section{Proof of Proposition \ref{teo_counter}}

The following example is essentially that in \cite[p. 913]{kasuewashio}. Let $m \ge 4$. We consider a manifold $(P^{m-2},h)$ and smooth functions $f,\eta \in C^\infty(\R^+)$ to be chosen later, and define the following metric on $M \doteq \R \times \R^+ \times P$:
	\[
	\sigma = f(r)^2 \di t^2 + \di r^2 + \eta(r)^2h.
	\]
To compute the curvatures of $M$, we use the index agreement $1 \le a,b,c,l \le m$, $3 \le \alpha,\beta,\gamma,\delta \le m$. Let $\{\theta^\alpha\}$ be a local orthonormal coframe on $P$, with associated connection forms $\omega^\alpha_\beta$ obeying the structure equations
	\[
	\left\{ \begin{array}{l}
	\di \theta^\alpha = - \omega^\alpha_\beta \wedge \theta^\beta \\[0.1cm]
	\omega^\alpha_\beta = - \omega^\beta_\alpha 
	\end{array}\right.
	\]
and related curvature forms $\Theta^\alpha_\beta = \di \omega^\alpha_\beta + \omega^\alpha_\gamma \wedge \omega^\gamma_\beta$. Then, a local orthonormal coframe $\{\bar \theta^a\}$ on $M$ is given by 
	\[
	\bar \theta^1 = f \di t, \quad \bar \theta^2 = \di r, \quad \bar \theta^\alpha = \eta \theta^\alpha,
	\]
where, as usual, pull-backs to $M$ via the canonical projections onto $\R,\R^+$ and $P$ are implicit. Differentiating, one checks that the forms
	\[
	\bar \omega^\alpha_1 = 0, \quad \bar \omega^\alpha_2 = \frac{\eta'}{\eta} \bar \theta^\alpha, \quad \bar \omega^\alpha_\beta = \omega^\alpha_\beta, \quad \bar \omega^2_1 = - \frac{f'}{f} \bar \theta^1.
	\]
satisfy the structure equations on $M$ for the coframe $\{\bar \theta^a\}$, hence they are the connection forms of $\{\bar \theta^a\}$. The associated curvature forms $\bar \Theta^a_b = \di \bar \omega^a_b + \bar \omega^a_c \wedge \bar \omega^c_b$ are therefore
	\begin{equation}\label{eq_curvforms}
	\begin{array}{ll}
	\disp \bar \Theta^\alpha_1 = - \frac{\eta'f'}{\eta f} \bar \theta^\alpha \wedge \bar \theta^1, & \quad \disp \bar \Theta^\alpha_2 = \frac{\eta''}{\eta} \bar \theta^2 \wedge \bar \theta^\alpha, \\[0.3cm]
	\disp \bar \Theta^\alpha_\beta = \Theta^\alpha_\beta - \left( \frac{\eta'}{\eta} \right)^2 \bar \theta^\alpha \wedge \bar \theta^\beta & \quad \disp \bar \Theta^2_1 = - \frac{f''}{f} \bar \theta^2 \wedge \bar \theta^1, 
	\end{array}
	\end{equation}
The components $R^\alpha_{\beta \gamma \delta}$ and $\bar R^a_{bcl}$ of the $(3,1)$ curvature tensors of, respectively, $P$ and $M$, are given by the identities
		\[
		\Theta^\alpha_\beta = \frac{1}{2} R^\alpha_{\beta \gamma \delta} \theta^\gamma \wedge \theta^\delta, \qquad \bar \Theta^a_b = \frac{1}{2} \bar R^a_{bcl} \bar\theta^c \wedge \bar \theta^l,
		\]	
and thus, from \eqref{eq_curvforms}, we deduce
		\begin{equation}\label{curvature-operator}
		\begin{array}{c}
		0 = \bar R^2_{12\alpha} = \bar R^2_{1\alpha 1} = \bar R^2_{1\alpha\beta} = \bar R^\alpha_{12\beta} = \bar R^\alpha_{1\gamma \delta} = \bar R^\alpha_{2\gamma \delta}\\[0.3cm]
		\bar R^2_{121} = - \frac{f''}{f}, \quad \bar R^\alpha_{1\beta 1} = - \frac{\eta'f'}{\eta f} \delta^\alpha_\beta, \quad \bar R^\alpha_{2\beta 2} = -\frac{\eta''}{\eta} \delta^\alpha_\beta, \\[0.3cm]
		\bar R^\alpha_{\beta\gamma\delta} = \frac{1}{\eta^2} R^\alpha_{\beta\gamma\delta} - \left(\frac{\eta'}{\eta}\right)^2 \left[ \delta^\alpha_\gamma \delta_{\beta \delta} - \delta^\alpha_\delta \delta_{\beta \gamma} \right].
		\end{array}
		\end{equation}	
Assume that $(P,h)$ is the round sphere with curvature $1$, and let $\{e_\alpha\}$ and $\{\bar e_a\}$ be, respectively, the dual frames of $\{\theta^\alpha\}$ and $\{\bar \theta^a\}$. 
From \eqref{curvature-operator} we deduce that the curvature operator is diagonalized by the simple planes $\{\bar e_a \wedge \bar e_b\}$, so for $m \ge 4$ we get
\[
|\overline{\Sec}(\pi)| \le \max \left\{ \left|\frac{f''}{f}\right|, \left| \frac{\eta'f'}{\eta f}\right|, \left|\frac{1- (\eta')^2}{\eta^2}\right|,\left|\frac{\eta''}{\eta}\right|\right\}.
\]
In \cite{kasuewashio}, the authors chose the following functions $f,\eta$: given $\alpha, \beta \in (0,1)$ such that $m-1-\beta > 2+\alpha$, let $0 < \zeta_1,\zeta_2 \in C^\infty(\R^+)$ satisfy 
	\[
	\zeta_1(t) = \left\{ \begin{array}{ll}
	t & \text{ if } t \in (0,1] \\
	t^{-1-\alpha} & \text{ if } \, t \in [2,\infty), 
	\end{array}\right. \qquad  \zeta_2(t) = \int_t^\infty \zeta_1(s) \di s. 
	\]
Then, for $b,c \in \R^+$ they defined
	\[
	\eta(r) = \frac{1}{2} r + \frac{1}{2\zeta_2(0)} \int_0^r \zeta_2(s) \di s, \qquad f(r) = (b+r^2)^{\frac{\beta + 3 - m}{2}} + c.
	\]		
Note that with such a choice the metric extends in a $C^2$ way at $r=0$, giving rise to a complete manifold. Since the curvature operator is diagonalized by $\{\bar e_a \wedge \bar e_b\}$, 
\begin{equation}\label{eq_2lin}
\begin{array}{lcl}
\disp \overline{\Ric}^{(2)} & \geq & \disp \min\left\{-\frac{f''}{f}+\frac{1-(\eta')^2}{\eta^2},-\frac{f''}{f}-\frac{\eta'f'}{\eta f},-\frac{f''}{f}-\frac{\eta''}{\eta}, \right. \\[0.4cm] 
& & \disp \left.  \frac{1-(\eta')^2}{\eta^2} -\frac{\eta'f'}{\eta f}, \frac{1-(\eta')^2}{\eta^2} -\frac{\eta''}{\eta}, -\frac{\eta''}{\eta} -\frac{\eta'f'}{\eta f}, -2\frac{\eta'f'}{\eta f}, \right. \\[0.4cm]
& & \disp \left. + 2\frac{1-(\eta')^2}{\eta^2}, - 2\frac{\eta''}{\eta}\right\}.
\end{array}
\end{equation}
By the expression of $\eta,f$, the four terms in the second line of \eqref{eq_2lin} are positive, and it is easy to see that, when $b,c$ are large enough, the three terms in the first line are positive as well. the two terms in the third line are positive except at $r=0$. Whence, $\overline{\Ric}^{(2)} \ge 0$, and moreover $|\overline{\Sec}| \le \bar \kappa^2$ holds for a suitable $\bar \kappa>0$. Moreover, from the fact that $\overline{\Ricc}$ is diagonal in the basis $\{\bar e_a\}$ with
		\[
		\begin{array}{c}
		\overline{\Ricc}_{11} = - \frac{f''}{f} - (m-3)\frac{\eta'f'}{\eta f}, \qquad \overline{\Ricc}_{22} = - \frac{f''}{f} - (m-3)\frac{\eta''}{\eta}, \\[0.4cm]
		\overline{\Ricc}_{\alpha \beta} = \left[ - \frac{\eta'f'}{\eta f} - \frac{\eta''}{\eta} + (m-3)\frac{1- (\eta')^2}{\eta^2} \right]\delta_{\alpha\beta}
		\end{array}
		\]
we deduce that $\Ricc > 0$ if $b,c$ are chosen large enough. To construct linearly growing minimal graphs, consider a function $u : M \to \R$ of the coordinate $t$ alone. It follows that $\di u = u_a \bar \theta^a$ with $u_1 = (\partial_t u)/f$ and $u_a = 0$ for $a \ge 2$. The components of the Hessian $D^2 u$ obey the relation
		\[
		u_{ab} \bar \theta^b = \di u_a - u_c \bar \omega^c_a,
		\]
and from the expression of $\bar \omega^c_a$ we get
		\[
		\begin{array}{c}
		u_{11} = \frac{\partial^2_t u}{f^2}, \quad u_{21} = - \frac{f'}{f^2}\partial_t u, \\[0.3cm]
		\quad u_{1\alpha} = u_{22} = u_{2\alpha} = u_{\alpha\beta} = 0, 
		\end{array}
		\]
In particular, setting $W = \sqrt{1 + |Du|^2} = \sqrt{ 1 + u_1^2}$, 
		\[
		\diver\left( \frac{Du}{\sqrt{1+|Du|^2}} \right) = \frac{\Delta u}{W} - \frac{D^2 u(Du,Du)}{W^3}= \frac{\partial^2_t u}{f^2 W} - \frac{(\partial^2_t u)u_1^2}{f^2 W^3} = \frac{\partial^2_t u}{f^2W^3}.
		\]	
It follows that any affine function $u(t) = at + b$ gives rise to a minimal graph. Furthermore, $|Du| = a/f$ is bounded on $M$ since $f$ is bounded below by a positive constant, thus $u$ has at most linear growth.

\begin{appendix}

\section{}

Let $M$ be a complete Riemannian manifold with non-negative Ricci curvature, $\dim M = m$, and let $A,L,H_L$ be as in section \ref{sec_L}. In this Appendix, we discuss the two-sided bound in \eqref{H_Gbound} for $H_L$. While the upper bound is shown in \cite{sal92}, the argument for the lower bound is merely indicated with no proof. The approach relies on the following parabolic Harnack inequality in \cite[Corollary 5.4]{sal92}: given $p\in M$, $R>0$, $T>0$ and $\delta\in(0,1)$, if $u$ is a positive solution to  $\partial_t u = Lu$ on $B_R(p) \times (0,T)$, then
	\begin{equation} \label{diff_harn}
		\log\left( \frac{u(t,y)}{u(s,x)} \right) \leq C\left( \frac{\dist(x,y)^2}{s-t} + \left(\frac{1}{R^2} + \frac{1}{t} \right) (s-t) + 1 \right)
	\end{equation}
for every $x,y\in B_{\delta R}(p)$ and $0<t<s<T$, with $C = C(m,\delta,\alpha)>0$. A note of warning: in \cite[Corollary 5.4]{sal92}, the final $+1$ in brackets in \eqref{diff_harn} is missing. However, necessity of this correction becomes apparent by direct inspection of Moser's original proof, \cite[pages 110--112]{moser64}, in Euclidean setting (the analogue of \eqref{diff_harn} is \cite[Formula (1.5)]{moser64}). For the reader's convenience, we give a proof that the lower bound in \eqref{H_Gbound} follows from the upper one coupled with \eqref{diff_harn}, along the lines of the argument developed by Aronson and Serrin \cite{as67} in the Euclidean case. A few observations are in order.

First, in view of Lemma \ref{lem_balls} the upper bound in \eqref{H_Gbound} implies
\begin{equation} \label{G_upper}
	H_L(x,y,t) \leq \frac{C_3'}{|B_{\sqrt{t}}(x)|} \exp\left( - C_4'\dfrac{\dist(x,y)^2}{t} \right) \qquad \forall x,y\in M, \, t > 0
\end{equation}
with $C_3',C_4'>0$ depending only on $m$ and $\alpha$ (the ellipticity constant of $A$). Secondly, the differential Harnack inequality \eqref{diff_harn} applied to $u = H_L(x,\,\cdot\,,\,\cdot\,)$ yields
\begin{equation} \label{parab_H}
	H_L(x,y_1,t_1) \leq H_L(x,y_2,t_2) \exp\left( C\frac{\dist(y_1,y_2)^2}{t_2-t_1} + C\frac{t_2}{t_1} \right)
\end{equation}
for every $y_1,y_2\in M$ and $0 < t_1 < t_2 < \infty$, with $C = C(m,\alpha)>0$. Lastly, note that if we have the validity of a lower bound of the form
\begin{equation} \label{G_lower}
	H_L(x,y,t) \geq \frac{C_1'}{|B_{\sqrt{t}}(x)|} \exp\left( - C_2'\dfrac{\dist(x,y)^2}{t} \right) \qquad \forall x,y\in M, \, t > 0
\end{equation}
with $C_1',C_2'>0$ depending only on $m$ and $\alpha$, then, again by Lemma \ref{lem_balls}, a lower bound as that in \eqref{H_Gbound} holds for suitable constants $C_1 \in (0,C_1')$ and $C_2>C_2'$ depending only on $C_1'$, $C_2'$ and $m$. Hence, we limit ourselves to the proof that \eqref{G_lower} follows from \eqref{G_upper} and \eqref{parab_H} under the assumption $\Ricc\geq0$.

Fix a constant $c_0>2$ such that
\begin{equation} \label{c_0}
	\gamma \doteq m C_3' \int_{\sqrt{c_0}}^{+\infty} s^{m-1} e^{-C_4' s^2} \, \di s < 1 \, .
\end{equation}
Let $(x,y,t) \in M \times M \times \RR^+$ be given. By \eqref{parab_H} we have
\begin{equation} \label{HL_in1}
	H_L(x,x,t/2) \leq H_L(x,y,t) \exp\left( 2C \frac{\dist(x,y)^2}{t} + 2C \right)
\end{equation}
and also
$$
	H_L(x,z,t/c_0) \leq H_L(x,x,t/2) \exp\left( c_0^\ast C\frac{\dist(x,z)^2}{t} + \frac{c_0}{2}C \right)
$$
for every $z\in M$, with $c_0^\ast = \frac{2c_0}{c_0 - 2} = \left( \frac{1}{2} - \frac{1}{c_0} \right)^{-1}$. Integrating on $B_{\sqrt{t}}(x)$ we get
\begin{equation} \label{HL_in2}
	\int_{B_{\sqrt{t}}(x)} H_L(x,z,t/c_0) \, \di z \leq e^{(c_0^\ast + c_0/2) C} |B_{\sqrt{t}}(x)| H_L(x,x,t/2) \, .
\end{equation}
Putting together \eqref{HL_in1} and \eqref{HL_in2} we obtain
\begin{equation} \label{G_lower0}
	H_L(x,y,t) \geq \frac{e^{-(2+c_0/2+c_0^\ast)C}}{|B_{\sqrt{t}}(x)|} \exp\left( -2C\frac{\dist(x,y)^2}{t} \right) \int_{B_{\sqrt{t}}(x)} H_L(x,z,t/c_0) \, \di z \, .
\end{equation}
From the upper bound \eqref{G_upper} and the co-area formula we have
\begin{align*}
	\int_{M\setminus B_{\sqrt{t}}(x)} H_L(x,z,t/c_0) \, \di z & \leq C_3' \int_{\sqrt{t}}^{\infty} \frac{|\partial B_r(x)|}{\left|B_{\sqrt{t/c_0}}(x)\right|} \exp\left(-c_0C_4'\frac{r^2}{t}\right) \, \di r \\
	& = C_3' \int_{\sqrt{c_0}}^{\infty} \frac{\sqrt{t/c_0} \left|\partial B_{s\sqrt{t/c_0}}(x)\right|}{\left|B_{\sqrt{t/c_0}}(x)\right|} e^{-C_4's^2} \, \di s
\end{align*}
where we have changed variable $s = r\sqrt{c_0/t}$. Since $\Ricc\geq0$ we have
$$
	\frac{\sqrt{t/c_0} \left|\partial B_{s\sqrt{t/c_0}}(x)\right|}{\left|B_{\sqrt{t/c_0}}(x)\right|} \leq s^{m-1} \frac{\sqrt{t/c_0} \left|\partial B_{\sqrt{t/c_0}}(x)\right|}{\left|B_{\sqrt{t/c_0}}(x)\right|} \leq m s^{m-1} \, ,
$$
where the first inequality follows by Bishop-Gromov theorem and the second from the inequality $R|\partial B_R(x)| \leq m|B_R(x)|$, holding for every $R>0$ and for any base point $x$ on a Riemannian manifold with $\Ricc\geq0$, see for instance \cite[Formula (19)]{liann}. Substituting in the above estimate and recalling \eqref{c_0} and Lemma \ref{lem_stoch} we get
$$
	\int_{B_{\sqrt{t}}(x)} H_L(x,z,t/c_0) \, \di z = 1 - \int_{M\setminus B_{\sqrt{t}}(x)} H_L(x,z,t/c_0) \, \di z \geq 1 - \gamma > 0
$$
and from \eqref{G_lower0} we obtain
$$
	H_L(x,y,t) \geq \frac{C_1'}{|B_{\sqrt{t}}(x)|} \exp\left( -2C\frac{\dist(x,y)^2}{t} \right)
$$
where $C_1' = (1-\gamma)e^{-(2+c_0/2+c_0^\ast)C} > 0$ only depends on $m$ and $\alpha$. This proves \eqref{G_lower}.

\end{appendix}

\begin{center}
{\bf Acknowledgement}
\end{center} 
This work was done when E.S.G. was a professor at Universidade Federal Rural do Semi-\'Arido, Cara\'ubas, and he thanks the institution for the fruitful working environment.

\bibliographystyle{plain}

\begin{thebibliography}{99}

\bibitem{amr} L. J. Al\'ias, P. Mastrolia, M. Rigoli, \emph{Maximum principles and geometric applications.} Springer Monographs in Mathematics, Springer, Cham, 2016. MR3445380

\bibitem{almgren} F.J. Almgren Jr., \emph{Some interior regularity theorems for minimal surfaces and an
extension of Bernstein's theorem.} Ann. of Math. 85, 277-292 (1966).

\bibitem{AH} L. Ambrosio and S. Honda, \emph{Local spectral convergence in $\mathsf{RCD}^*(K,N)$ spaces.}  Nonlinear Anal. 177, part A, 1-23 (2018).

\bibitem{AH_2} L. Ambrosio and S. Honda, \emph{New stability results for sequences of metric measure spaces with uniform Ricci bounds from below.} In: Measure theory in non-smooth spaces, Partial Differ. Equ. Meas. Theory, De Gruyter Open, Warsaw, 2017, pp. 1-51.

\bibitem{abfp} G. Antonelli, E. Bru\`e, M. Fogagnolo and M. Pozzetta, \emph{On the existence of isoperimetric regions in manifolds with nonnegative Ricci curvature and Euclidean volume growth.} Calc. Var. Partial Differential Equations \textbf{61} (2022), no. 2, Paper No. 77, 40 pp. 

\bibitem{abs} G. Antonelli, E. Bru\`e and D. Semola, \emph{Volume bounds for the quantitative singular strata of non collapsed $\mathsf{RCD}$ metric measure spaces.} Anal. Geom. Metr. Spaces 7, no. 1, 158-178 (2019).

\bibitem{as67} D. G. Aronson, J. Serrin, \emph{Local behavior of solutions of quasilinear parabolic equations.} Arch. Rational Mech. Anal. \textbf{25} (1967), 81--122. MR0244638

\bibitem{bernstein} S. Bernstein, \emph{Sur un th\'eor\`eme de g\'eom\`etrie et son application aux \'equations aux d\'eriv\'ees partielles du type elliptique.} Comm. Soc. Math. de Kharkov 2 (15), 38-45 (1915-1917); German translation: \emph{\"Uber ein
geometrisches Theorem und seine Anwendung auf die partiellen Differentialgleichungen vom elliptischen Typus.}
Math. Z. 26, no. 1, 551-558 (1927).

\bibitem{bcmmpr} B. Bianchini, G. Colombo, M. Magliaro, L. Mari, P. Pucci and M. Rigoli, \emph{Recent rigidity results for graphs with prescribed mean curvature.} Math. Eng. 3, no. 5, Paper No. 039, 48 pp  (2021). 

\bibitem{bmr_mem} B. Bianchini, L. Mari and M. Rigoli, \emph{On some aspects of Oscillation Theory and Geometry.} Mem. Amer. Math. Soc. 225, no. 1056 (2013).

\bibitem{bmpr} B. Bianchini, L. Mari, P. Pucci and M. Rigoli, \emph{Geometric Analysis of Quasilinear Inequalities on complete manifolds. Maximum and compact support principles and detours on manifolds.} Frontiers in Mathematics, Birkh\"auser/Springer, Cham, (2021), 286 pp.

\bibitem{bdgm} E. Bombieri, E. De Giorgi and M. Miranda, \emph{Una maggiorazione a priori relativa alle ipersuperfici minimali non parametriche.} Arch. Rational Mech. Anal. 32, 255-267 (1969).

\bibitem{bdgg} E. Bombieri, E. De Giorgi and E. Giusti, \emph{Minimal cones and the Bernstein problem.} Invent. Math. 7, 243-268 (1969). 

\bibitem{bg} E. Bombieri and E. Giusti, \emph{Harnack's inequality for elliptic differential equations on minimal surfaces.} Invent. Math. 15, 24-46 (1972).

\bibitem{brendle} S. Brendle, \emph{Sobolev inequalities in manifolds with nonnegative curvature.} To appear in Comm. Pure Appl. Math., preprint available at arXiv:2009.13717.

\bibitem{bps} E. Bru\`e, E. Pasqualetto and D. Semola, \emph{Rectifiability of the reduced boundary for sets of finite perimeter over $\mathsf{RCD}(K,N)$ spaces.} Available at arXiv:1909.00381.








\bibitem{cns} L. Caffarelli, L. Nirenberg and J. Spruck, \emph{On a form of Bernstein's theorem.} Analyse mathématique et applications, 55-66, Gauthier-Villars, Montrouge, 1988.

\bibitem{chh_nonexistence} J.-B. Casteras, E. Heinonen and I. Holopainen, \emph{Existence and non-existence of minimal graphic and $p$-harmonic functions.} P. Roy. Soc. Edinb. A 150, 341-366 (2020).

\bibitem{ccm} J. Cheeger, T.H. Colding and W.P. Minicozzi II, \emph{Linear growth harmonic functions on complete manifolds with nonnegative Ricci curvature.} Geom. Funct. Anal. 5, 948-954 (1995).

\bibitem{chengyau} S.Y. Cheng and S.T. Yau, \emph{Differential equations on Riemannian manifolds and their geometric applications.} Comm. Pure Appl. Math. 28, no. 3, 333-354 (1975).

\bibitem{cm} T.H. Colding and W.P. Minicozzi II, \emph{Large scale Behavior of Kernels of Schr\"odinger Operators,} Amer. J. Math. 119, no. 6, 1355-1398 (1997).

\bibitem{cm_book} T.H. Colding and W.P. Minicozzi II, \emph{A course in minimal surfaces.} Graduate Studies in Mathematics, 121, AMS, Providence, RI, 2011. xii+313 pp.

\bibitem{cmmr} G. Colombo, M. Magliaro, L. Mari and M. Rigoli, \emph{Bernstein and half-space properties for minimal graphs under Ricci lower bounds.} Int. Math. Res. Not. IMRN \textbf{2022} (2022), no. 23, 18256-18290. 

\bibitem{dl_bounded} M. Dajczer and J.H.S. de Lira, \emph{Entire bounded constant mean curvature Killing graphs.} J. Math. Pure Appl. 103, 219-227 (2015).

\bibitem{dl_unbounded} M. Dajczer and J.H.S. de Lira, \emph{Entire unbounded constant mean curvature Killing graphs.} Bull. Braz. Math. Soc. 48, 187-198 (2017).

\bibitem{dhd} M. Dajczer, P.A. Hinojosa and J.H.S. de Lira, \emph{Killing graphs with prescribed mean curvature.} Calc. Var. Partial Differential Equations 33, no. 2, 231-248 (2008).
 
\bibitem{dg1} E. De Giorgi, \emph{Una estensione del teorema di Bernstein.} Ann. Scuola Norm. Sup. Pisa (3) 19, 79-85 (1965).

\bibitem{dg2} E. De Giorgi, \emph{Errata-Corrige: ``Una estensione del teorema di Bernstein".} Ann. Scuola Norm. Sup. Pisa Cl. Sci. (3) 19, no. 3, 463-463 (1965).

\bibitem{dphg} G. De Philippis, N. Gigli, \emph{Non-collapsed spaces with Ricci curvature bounded from below.} J. \'Ec. polytech. Math. 5 (2018), 613--650.

\bibitem{ding} Q. Ding, \emph{Liouville type theorems for minimal graphs over manifolds.} Anal. PDE 14, no. 6, 1925-1949 (2021).
 
\bibitem{ding_new} Q. Ding, \emph{Poincar\'e inequality on minimal graphs over manifolds and applications.} Available at arXiv:2111.04458. 
 
\bibitem{dingjostxin} Q. Ding, J. Jost and Y. Xin, \emph{Minimal graphic functions on manifolds of nonnegative Ricci curvature.} Comm. Pure Appl. Math. 69, no. 2, 323-371 (2016).





\bibitem{eckerhuisken} K. Ecker and G. Huisken, \emph{A Bernstein result for minimal graphs of controlled growth.} J. Diff. Geom. 31, no. 2, 397-400 (1990).

\bibitem{esko_survey} E. Heinonen, \emph{Survey on the asymptotic Dirichlet problem for the minimal surface equation.} Minimal surfaces: integrable systems and visualisation, 111-129, Springer Proc. Math. Stat., 349, Springer, Cham, (2021).

\bibitem{far1} A. Farina, \emph{A Bernstein-type result for the minimal surface equation.} Ann. Scuola Norm. Sup. Pisa XIV 5, 1231-1237 (2015).

\bibitem{far2} A. Farina, \emph{A sharp Bernstein-type theorem for entire minimal graphs.} Calc. Var. Partial Differential Equations 57, no. 5, Art. 123, 5 pp (2018).

\bibitem{farina_new} A. Farina, \emph{Some rigidity results for minimal graphs over unbounded Euclidean domains.} Discrete Contin. Dyn. Syst. Ser. S \textbf{15} (2022), no. 8, 2209--2214.



\bibitem{finn2} R. Finn, \emph{New estimates for equations of minimal surface type.} Arch. Rational Mech. Anal. 14, 337-375 (1963).

\bibitem{fleming} W.H. Fleming, \emph{On the oriented Plateau problem.} Rend. Circolo Mat. Palermo 9, 69-89 (1962).


\bibitem{gms} N. Gigli, A. Mondino, G. Savar\'e, \emph{Convergence of pointed non-compact metric measure spaces and stability of Ricci curvature bounds and heat flows.} Proc. Lond. Math. Soc. (3) \textbf{111} (2015), no. 5, 1071--1129.


\bibitem{grigoryan} A. Grigor'yan, \emph{Analytic and geometric background of recurrence and non-explosion of the Brownian motion on Riemannian manifolds.} Bull. Amer. Math. Soc. 36, 135-249 (1999).


\bibitem{gro} M. Gromov, \emph{Metric structures for Riemannian and non-Riemannian spaces.} Modern Birkh\"auser Classics. Birkh\"auser Boston, Inc., Boston, MA, 2007. xx+585 pp.

 
\bibitem{H} S. Honda, \emph{Ricci curvature and $L^p$-convergence.} J. Reine Angew. Math. 705, 85-154 (2015).

\bibitem{hopf_bern} E. Hopf, \emph{On S. Bernstein's theorem on surfaces $z(x,y)$ of nonpositive curvature.}
Proc. Amer. Math. Soc. 1, 80-85 (1950).


\bibitem{kasue} A. Kasue, \emph{Harmonic functions with growth conditions on a manifold of asymptotically nonnegative curvature. II.} Recent topics in differential and analytic geometry, 283-301,
Adv. Stud. Pure Math., 18-I, Academic Press, Boston, MA, 1990.

\bibitem{kasuewashio} A. Kasue and T. Washio \emph{Growth of equivariant harmonic maps and harmonic
morphisms.} Osaka J. Math. 27, 899-928 (1990).


\bibitem{korevaar} N. Korevaar, \emph{An easy proof of the interior gradient bound for solutions of the precribed mean curvature equation.} Proc. of Symp. in Pure Math. 45, AMS (1986).

\bibitem{liann} P. Li, \emph{Large time behavior of the heat equation on complete manifolds with non-negative Ricci curvature.} Ann. Math. (2) 124 (1986), no. 1, 1-21.

\bibitem{litam} P. Li, L.-F. Tam, \emph{Linear growth harmonic functions on a complete manifold.} J. Differential Geom., \textbf{29} (1989), no. 2, 421--425.
	

\bibitem{liwang_mini} P. Li and J. Wang, \emph{Finiteness of disjoint minimal graphs.} Math. Res. Lett. 8, no. 5-6, 771-777 (2001).
 
\bibitem{liwang_crelle} P. Li and J. Wang, \emph{Stable minimal hypersurfaces in a nonnegatively curved manifold.} J. Reine Angew. Math. 566, 215-230 (2004).
 
\bibitem{liyau_acta} P. Li and S.T. Yau, \emph{On the parabolic kernel of the Schr\"odinger operator.} Acta Math. 156, no. 3-4, 153-201 (1986).

\bibitem{mmu} C. Mantegazza, G. Mascellani and G. Uraltsev, \emph{On the distributional Hessian of the distance function.} Pacific J. Math. 270, no. 1, 151-166 (2014).


\bibitem{maripessoa} L. Mari and L.F. Pessoa, \emph{Duality between Ahlfors-Liouville and Khas'minskii properties for nonlinear equations.} Comm. Anal. Geom. 28, no. 2, 395-497 (2020). 



 
\bibitem{emi} E.J. Mickle, \emph{A remark on a theorem of Serge Bernstein.} Proc. Amer. Math. Soc. 1, 86-89 (1950).


\bibitem{jmoser} J. Moser, \emph{On Harnack’s theorem for elliptic differential equations.} Comm. Pure Appl. Math. 14, 577-591 (1961).

\bibitem{moser64} J. Moser, \emph{A Harnack inequality for parabolic differential equations.} Comm. Pure Appl. Math. 17, 101--134 (1964).






\bibitem{prsmemoirs} S. Pigola, M. Rigoli and A.G. Setti, \emph{Maximum principles on Riemannian manifolds and applications.} Mem. Amer. Math. Soc. 174, no. 822 (2005).





\bibitem{rosenbergschulzespruck} H. Rosenberg, F. Schulze and J. Spruck, \emph{The half-space property and entire positive minimal graphs in $M \times \R$.} J. Diff. Geom. 95, 321-336 (2013).

\bibitem{sal92} L. Saloff-Coste, \emph{Uniformly elliptic operators on Riemannian manifolds}. J. Differential Geom. \textbf{36} (1992), no. 2, 417-450. MR1180389
	




\bibitem{Simon}	L. Simon, \emph{Entire solutions of the minimal surface equation.} J. Differential Geom. 30, no. 3, 643-688 (1989).


\bibitem{simons} J. Simons, \emph{Minimal varieties in Riemannian manifolds.} Ann. of Math. 88, 62-105  (1968).

\bibitem{sormani} C. Sormani, \emph{The Rigidity and Almost Rigidity of Manifolds with Lower Bounds on Ricci Curvature and Minimal Volume Growth.} Comm. Anal. Geom. 8, no. 1, 159-212 (2000).


\bibitem{trudi} N.S. Trudinger, \emph{A new proof of the interior gradient bound for the minimal surface equation in $n$ dimensions.} Proc. Nat. Acad. Sci. U.S.A. 69, 821-823 (1972).

\bibitem{yau} S.T. Yau, \emph{Harmonic functions on complete Riemannian manifolds.} Comm. Pure
Appl. Math. 28, 201-228 (1975).
	
\bibitem{varopoulos} N.T. Varopoulos, \emph{The Poisson kernel on positively curved manifolds.} J. Funct. Anal. 44, no. 3, 359-380 (1981).

\end{thebibliography}

\end{document}